\definecolor{red}{rgb}{1,0,0}
\definecolor{blue}{rgb}{.2,.2,.8}
\def\ped{\textrm{ped}}
\def\pend{\textrm{pend}}
\def\pond{\textrm{pond}}
\def\pd{\textrm{pd}}
\def\pe{\textrm{pe}}
\def\PD{\mathcal{PD}}
\newtheorem{theorem}{Theorem}[section]
\newtheorem{corollary}[theorem]{Corollary}
\newtheorem{proposition}[theorem]{Proposition}
\theoremstyle{definition}
\newtheorem{example}{Example}
\newtheorem{remark}{Remark}
\begin{document}

\title{Generalizations of POD and  PED partitions  }
\author{Cristina Ballantine}\address{Department of Mathematics and Computer Science\\ College of the Holy Cross \\ Worcester, MA 01610, USA \\} 
\email{cballant@holycross.edu} 
\author{Amanda Welch} \address{Department of Mathematics and Computer Science\\ Eastern Illinois University \\ Charleston, IL 61920, USA \\} \email{arwelch@eiu.edu}

\maketitle

\begin{abstract}Partitions with even (respectively odd) parts distinct and all other parts unrestricted are often referred to as PED (respectively POD) partitions. In this article, we generalize these notions and study sets of partitions in which parts with fixed residue(s) modulo $r$ are distinct while all other parts are unrestricted. We also study partitions in which parts divisible by $r$ (respectively congruent to $r$ modulo $2r$)  must occur with multiplicity greater than one.

\
\\
{\bf Keywords:} 
{Partitions, POD and PED partitions, recurrences, Beck-type identities, $q$-series}
\\
\\
{\bf MSC 2020:} 
{11P81, 11P82, 05A17, 05A19}
\end{abstract}

\allowdisplaybreaks

\section{Introduction}

A partition of a non-negative integer $n$ is a non-increasing sequence of  positive integers that sum  to $n$. We write a partition of $n$  as $\lambda = (\lambda_1, \lambda_2, \cdots, \lambda_{\ell})$ with $\lambda_1\geq \lambda_2\geq \ldots \geq \lambda_\ell$ and $\lambda_1 + \lambda_2 + \dotsm + \lambda_{\ell} = n$. We refer to the terms $\lambda_i$ as the parts of $\lambda$. As usual, we denote the number of partitions of $n$ by $p(n)$. 

Partition identities, statements about the number of partitions of $n$ with different sets of restrictions on the parts, are abundant in the theory of partitions. For an excellent introduction to the topic we refer the reader to \cite{A98}. 

Historically,  the first partition identity is perhaps Euler's theorem that states that, for $n \geq 0$, the number of partitions of $n$ with all parts odd equals the number of partitions of $n$ with all parts distinct. This result relates partitions with a parity condition to partitions with a multiplicity of parts condition. Many related results have been discovered and their study continues today. Of particular recent (and not so recent) interest have been partitions in which parts of fixed parity are required to be distinct, i.e., occur in the partition with multiplicity one. In the literature, partitions in which even (respectively odd) parts are distinct and all other parts are unrestricted are referred to as PED (respectively POD) partitions. 
 Much work has been done on these types of partitions. Identities as well as arithmetic properties are studied, for example, in \cite{A09}, \cite{A10}, \cite{BM21}, \cite{BW23}, \cite{M17}. Moreover, PED partitions were used by Andrews  \cite{A72} in the combinatorial proof of one of Gauss' theorems expressing Jacobi theta functions as infinite products.   

In this paper, we introduce generalizations of PED and POD partitions. We fix a modulus $r\geq 2$ and a residue $0\leq t < r$ and require parts congruent to $t$ modulo $r$ to be distinct while all other parts are unrestricted. When $r=2$ and $t=0$ (respectively $t=1$), this definition describes the PED (respectively POD) partitions. 

Recall that Glaisher's generalization of Euler's theorem states that the number of partitions of $n$ with no part congruent to $0$ modulo $r$ equals the number of partitions of $n$ where all parts must occur less than $r$ times. This leads us naturally to consider another  generalization of PED partitions by requiring that parts congruent to $0$ modulo $r$ occur less than $r$ times while all other parts are unrestricted.  We also consider other natural generalizations of POD partitions. 

It is worth noting that there has been other recent work  \cite{{A22}, {K}} in a related yet different direction: the study or partitions that are both $k$-regular, i.e., there are no parts divisible by $k$, and $s$-distinct, i.e., parts differ by at least $s$. The generalizations of PED and POD partitions introduced here are new and we hope that, in addition to our results, many new properties will be discovered.

The paper is structured as follows. In Section \ref{s_2}, we provide the necessary background material, including a list of frequently used notation. In Section \ref{s_3}, we study the properties of partitions in which parts congruent to $t$ modulo $r$ are distinct and all other parts are unrestricted. We extend known results for PED and POD partitions to this new set of partitions. In the particular case $t=0$, we obtain several recurrence relations. In Section \ref{gen-ped-gl}, we  introduce a generalization of PED partitions inspired by Glaisher's identity mentioned above.   In Section \ref{s_4}, we introduce two additional generalizations of POD partitions: partitions in which parts not congruent to $0$ modulo $r$ are distinct and parts divisible by $r$ have unrestricted multiplicity, and partitions in which parts congruent to $\pm t$  modulo $r$ are distinct and all other parts are unrestricted. We prove several theorems about these classes of partitions. 
One such result (Theorem \ref{thirdpod} in Section \ref{s_4}) relates the number $pd_{\pm t,r}(n)$ of partitions with parts congruent to $\pm t$ modulo $r$ distinct and all other parts unrestricted to the number $p_{\overline{\pm 2t},2r}(n)$ of partitions with no parts congruent to $\pm 2t$ modulo $2r$.
\begin{theorem}
    If $n \geq 0$ and $0 < t < r / 2$, $pd_{\pm t,r}(n) = p_{\overline{\pm 2t},2r}(n).$
\end{theorem}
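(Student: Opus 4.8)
The plan is to prove the identity by comparing generating functions. First I would record the generating function for $pd_{\pm t,r}(n)$: since the parts congruent to $\pm t$ modulo $r$ must be distinct and all remaining parts are unrestricted,
\[
\sum_{n\geq 0} pd_{\pm t,r}(n)\,q^n \;=\; \prod_{\substack{k\geq 1\\ k\equiv \pm t \!\!\pmod r}}\!\!(1+q^k)\;\cdot\;\prod_{\substack{k\geq 1\\ k\not\equiv \pm t\!\!\pmod r}}\!\!\frac{1}{1-q^k}.
\]
Applying the identity $1+q^k=\dfrac{1-q^{2k}}{1-q^k}$ to each factor of the first product and then absorbing the resulting denominators into the second product, this equals
\[
\prod_{\substack{k\geq 1\\ k\equiv \pm t\!\!\pmod r}}\!\!(1-q^{2k})\;\cdot\;\prod_{k\geq 1}\frac{1}{1-q^k}.
\]

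The combinatorial heart of the argument is then the observation that the doubling map $k\mapsto 2k$ is a bijection from $\{k\geq 1:\ k\equiv \pm t\pmod r\}$ onto $\{m\geq 1:\ m\equiv \pm 2t\pmod{2r}\}$. In one direction this is immediate: $k\equiv \pm t\pmod r$ forces $2k\equiv \pm 2t\pmod{2r}$. For surjectivity one uses that $2t$ and $2r$ are both even, so every integer congruent to $\pm 2t$ modulo $2r$ is even, hence of the form $2k$ with $k\equiv \pm t\pmod r$. I would also note that the hypothesis $0<t<r/2$ ensures $t$ and $r-t$ are two distinct residues in $(0,r)$, neither equal to $0$ (and likewise $\pm 2t$ gives two distinct nonzero residues mod $2r$), so there is no degeneracy in the $\pm$ notation. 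Granting the bijection, $\prod_{k\equiv \pm t (r)}(1-q^{2k})=\prod_{m\equiv \pm 2t(2r)}(1-q^m)$, and therefore
\[
\sum_{n\geq 0} pd_{\pm t,r}(n)\,q^n \;=\; \prod_{\substack{m\geq 1\\ m\equiv \pm 2t\!\!\pmod{2r}}}\!\!(1-q^m)\;\cdot\;\prod_{k\geq 1}\frac{1}{1-q^k} \;=\; \prod_{\substack{k\geq 1\\ k\not\equiv \pm 2t\!\!\pmod{2r}}}\!\!\frac{1}{1-q^k},
\]
which is exactly $\sum_{n\geq 0}p_{\overline{\pm 2t},2r}(n)\,q^n$. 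Comparing coefficients of $q^n$ yields the theorem.

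I do not expect a genuine obstacle here; the only delicate point is the residue bookkeeping in the doubling bijection — in particular verifying that all residues $\equiv \pm 2t\pmod{2r}$ are even (so $k\mapsto 2k$ is onto) and that the inequality $0<t<r/2$ rules out the degenerate cases $t=0$ and $t=r/2$. As an alternative one could present an explicit Glaisher-type bijection directly on partitions (repeatedly merging equal parts congruent to $\pm t$ modulo $r$ and splitting parts congruent to $\pm 2t$ modulo $2r$), but the generating-function proof above is shorter, so that is the route I would take.
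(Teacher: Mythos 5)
Your proposal is correct and is essentially the paper's own approach: the paper proves this statement by adapting the analytic proof of Theorem~\ref{1st_gen}, whose Pochhammer manipulation $\frac{(q^{t};q^{r})_\infty(q^{r-t};q^{r})_\infty}{(q;q)_\infty}\,(-q^{t};q^{r})_\infty(-q^{r-t};q^{r})_\infty=\frac{(q^{2t};q^{2r})_\infty(q^{2r-2t};q^{2r})_\infty}{(q;q)_\infty}$ is exactly your $(1+q^{k})=\frac{1-q^{2k}}{1-q^{k}}$ step combined with the doubling of the residue classes $\pm t \bmod r$ to $\pm 2t \bmod 2r$. Your closing remark about a splitting/merging bijection is likewise the paper's combinatorial alternative (the case $t\neq 0$ of Theorem~\ref{1st_gen} carried over to $\pm t$).
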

If $r = 6$ and $t = 1$, this becomes $pd_{\pm 1,6}(n) = p_{\overline{\pm 2},12}(n).$ This sequence  can be found in \cite[A265254]{OEIS} where it is also described as the number of partitions of $n$ with even parts not distinct. This led us to investigate, in Section \ref{s_5},  PEND  (respectively POND) partitions, i.e, partitions with even (respectively odd) parts  not distinct, and their generalizations to partitions with parts congruent to $0$ modulo $r$ (respectively $r$ modulo $2r$) not distinct.
In Section \ref{s_6}, we give Beck-type identities for some of the identities introduced in this article. These are results giving a combinatorial interpretation for the excess in the total number of parts in all partitions of $n$ in one set of partitions versus another when the sets of partitions of $n$ are equinumerous.  Lastly,  in Section \ref{s_7}, we provide some concluding remarks and offer further avenues of inquiry for continuing the study of generalizations of  PED and POD partitions.

Whenever possible, we give both analytic and combinatorial proofs of our results.

\section{Background and Notation}\label{s_2}

In this section, we introduce notation and we briefly describe some necessary background on partitions. For a more extensive introduction to the theory of partitions, we refer readers to \cite{{A98},{AE}}. 

Given a partition $\lambda = (\lambda_1, \lambda_2, \cdots, \lambda_{\ell})$,  the size of $\lambda$, denoted $|\lambda|$, is the sum of all parts, i.e., the number being partitioned. We also write $\lambda\vdash n$ to mean that $\lambda$ is a partition of $n$. Occasionally, we work with pairs (or tuples) of partitions and abuse notation to write $(\lambda, \mu)\vdash n$ if $|\lambda|+|\mu|=n$ (and similarly for tuples). The number of parts in $\lambda$ is denoted by $\ell(\lambda)$ and we refer to it as the length of $\lambda$. When writing partitions, we sometimes use exponent notation to represent the multiplicity of parts, i.e., $(3^2, 2)$ denotes the partition $(3, 3, 2)$. We refer to a partition with all parts odd as an odd partition and to a partition with all parts distinct as a distinct partition. 

We use  several operations on partitions which we describe below. 

We often identify partitions with their multiset of parts. If $a$ is a positive integer, we write $a\in \lambda$ to mean that $a$ is a part of $\lambda$. 
Given two partitions $\lambda, \mu,$ we define $\lambda \cup \mu$ to be the multiset union of the parts of $\lambda$ and $\mu$, i.e., the  partition that consists of the parts of $\lambda$ and $\mu$  (with multiplicity) placed in non-increasing order. If $\mu$ is  a submultiset  of $\lambda$, we define $\lambda \setminus \mu$ to be the partition obtained by removing the parts of $\mu$ (with multiplicity) from the parts of $\lambda$.  If $m$ is a positive integer, $m \lambda$ denotes the partition whose parts are the the parts of $\lambda$ multiplied by $m$. 
If $\lambda$ is a partition with all parts divisible by $m$, we write $\frac{1}{m} \lambda$  for the partition whose parts are  the parts of $\lambda$ divided by $m$.  Often we write $\lambda = (\lambda^{r \mid}, \lambda^{r \nmid})$ where $\lambda^{r \mid}$ consists of the parts of $\lambda$ divisible by $r$  and $\lambda^{r \nmid}$ consists of the parts of $\lambda$ not divisible by $r$. As noted above, a partition $\lambda$ is said to be $r$-regular if it contains no parts divisible by $r$. 

Necessarily, this article contains a substantial amount of new notation. To help the reader, we first explain our rules for notation used throughout the paper. In the notation for a set of partitions, we use $\mathcal P$ to stand for ``partitions,"   letters following $\mathcal{P}$ place a restriction on certain parts of the partition, and the subscript describes which parts are restricted. The subscript consists of two parts: the residue(s) and the modulus. If the first term in the subscript is overlined, then  the restriction pertains to all parts not congruent to the overlined residue(s). For example, $\mathcal{PD}_{0, r}(n)$ is the set of partitions of $n$ where parts congruent to $0$ modulo $r$ must be distinct and $\mathcal{PD}_{\overline{0}, r}(n)$ is the set of partitions of $n$ where parts not congruent to $0$ modulo $r$ must be distinct.  We use $\mathcal{PD}$ for partitions in which certain parts must be distinct, $\mathcal{PND}$ for partitions in which certain parts cannot be distinct (i.e., must repeat), $\mathcal{PE}$ for partitions in which only certain even parts may be used, and $\mathcal{PEM}$ for partitions in which  certain parts  must have even multiplicity. Further, if there are additional restrictions on the entire set of partitions, then a superscript will be used. For example, we use $\mathcal{PD}^r_{0, 2r}(n)$ to denote the set of partitions of $n$ with all parts divisible by $r$ and parts congruent to $0$ mod $2r$ distinct. If the only letter in the description of the set is  $\mathcal{P}$ followed by a subscript, the set consists of all partitions whose parts are described by the subscript.  In addition, we use $\mathcal{Q}$ for the set of partitions with all parts distinct. We omit the size of the partitions from the notation if we mean the set of all partitions with a given description. For example $$\mathcal{PD}_{0,r}=\bigcup_{n\geq 0}\mathcal{PD}_{0, r}(n).$$
\begin{example}
    Among the partitions in $\mathcal{PD}_{0, 3}(9)$ are  $(9), (6,3), (6,2,1), (5,2,1,1)$, and $(3, 1^6)$. The partitions  $(3^3), (3^2, 2, 1),$ and $(3^2, 1^3)$ are not in $\mathcal{PD}_{0, 3}(9)$ as they contain repeated parts congruent to $0$ mod $3$. 
    Among the partitions in  $\mathcal{PD}_{\overline{0}, 3}(9)$ are $(3^3)$ and $(3^2, 2, 1)$. The partitions  $(3^2, 1^3)$ and $(2^3, 1)$ are not in $\mathcal{PD}_{\overline{0}, 3}(9)$ as they contain repeated parts that are not congruent to $0$ mod $3$. 
   \end{example}

\noindent \textit{Notation List:} Let $n\geq 0$, $r\geq 2$ and $0\leq t < r$  be integers. The following lists our frequently used notation for sets of partitions.

\begin{longtable}{p{.8in} p{3.6in}}
$\mathcal{P}_{t, r}(n)$ & partitions of $n$ in which all parts are congruent to $t\mod r$.  \\

$\mathcal{P}_{\overline{t}, r}(n)$ & partitions of $n$ in which no parts are congruent to $t\mod r$. \\

$\mathcal{Q}_{\overline{0}, 2r}(n)$ & distinct partitions of $n$ with no parts congruent to $0 \mod{2r}$.\\

$\mathcal{PD}_{t,r}(n)$ & partitions of $n$ in which all parts congruent to $t\mod r$ are distinct and all other parts are unrestricted.  \\

$\mathcal{PD}_{\bar{t},r}(n)$ & partitions of $n$ in which all parts not congruent to $t\mod r$ are distinct and parts congruent to $t
\mod r$ are unrestricted. \\ 

$\mathcal{B}_{r}(n)$ & $r$-regular partitions of $n$, i.e., partitions of $n$  with no parts congruent to $0 \mod{r}$.  \\

$\mathcal{PE}_{0, 2r}(n)$ &  partitions of $n$ where all even parts are congruent to $0$ mod $2r$ and all other parts are unrestricted.\\

$\mathcal{QE}_{0, 2r}(n)$ &  distinct partitions of $n$ where all even parts are congruent to $0$ mod $2r$. \\

$\mathcal{PD}^r_{0, 2r}(n)$ & partitions of $n$ with all parts divisible by $r$ and parts congruent to $0$ mod $2r$ distinct.\\

$\mathcal{PND}_{0, r}(n)$ & partitions of $n$ where parts congruent to $0$ mod $r$ are not distinct and all other parts are unrestricted.\\

$\mathcal{PND}_{\overline{0}, r}(n)$ & partitions of $n$ where parts not congruent to $0$ mod $r$ are not distinct and all other parts are unrestricted.\\

$\mathcal{PEM}_{\pm r, 3r}(n)$ & partitions of $n$ where parts congruent to $\pm r \mod{3r}$ have even multiplicity and all other parts are unrestricted.\\

$\mathcal{PRM}_{0,r}(n)$ & partitions of $n$ where parts congruent to $0$ modulo $r$ have multiplicity less than $r$ and all other parts are unrestricted.\\

\end{longtable}

Note that $\mathcal B_{r}(n)=\mathcal P_{\overline{0}, r}(n)$. However, $r$-regular partitions have been studied extensively and the notation $\mathcal B_{r}(n)$ is customary. 

We denote the number of partitions in a given set by corresponding lowercase letters. For example, $\textrm{pd}_{0,r}(n)$ denotes the number of partitions of $n$ in which all parts congruent to $0\mod r$ are distinct and all other parts are unrestricted. We make an exception for distinct partitions, using $Q$ instead of $q$ to avoid confusion with the variable in $q$-series. We denote by $\textrm{ped}(n)$ (respectively $\textrm{pod}(n)$) the number of PED (respectively POD) partitions of $n$. 

The index $e-o$ appended to a subscript of a partition number indicates that each partition $\lambda$   is counted with weight $(-1)^{\ell(\lambda)}$. For example, $\pd_{0,r,e-o}(n)$ is the number of partitions in $\PD_{0,r}(n)$ with an even number of parts minus the number of partitions in $\PD_{0,r}(n)$ with an odd number of parts, i.e.,  $$\pd_{0,r,e-o}(n)=|\{\lambda\in \PD_{0,r}(n) \mid \ell(\lambda) \text{ even}\}|- |\{\lambda\in \PD_{0,r}(n) \mid \ell(\lambda) \text{ odd}\}|.$$

The set of positive integers is denoted by $\mathbb N$ and the set of non-negative  integers is denoted by  $\mathbb N_0$. 

We use the Pochhammer symbol notation
\begin{align*}
	& (a;q)_n = \begin{cases}
	1 & \text{for $n=0$,}\\
	(1-a)(1-aq)\cdots(1-aq^{n-1}) &\text{for $n>0$;}
	\end{cases}\\
	& (a;q)_\infty = \lim_{n\to\infty} (a;q)_n.
	\end{align*}
	We assume $|q|<1$, so all series converge absolutely. 
	
	In several of our combinatorial proofs we make use of Glaisher's bijection $\varphi_G$ \cite{Gl}. If we denote by $\mathcal D_r(n)$ the set of partitions of $n$ with parts occurring less than $r$ times, 
	$$\varphi_G:\mathcal B_r(n)\to \mathcal D_r(n) $$ takes a partition into parts not divisible by $r$ and merges $r$ equal parts into a single part repeatedly until the transformed partition has no parts repeated at least $r$ times. 
 
 If $r=2$, Glaisher's bijection gives a combinatorial proof of Euler's identity. 

\section{Generalizations of ped and pod partitions}\label{s_3}

Let $n\geq 0$, $r\geq 2$ and $0\leq t < r$ be integers and denote by $\mathcal{PD}_{t,r}(n)$ the set of partitions of $n$ in which parts congruent to $t$ modulo $r$ are distinct and all other parts are unrestricted. Set $\pd_{t,r}(n):=|\mathcal{PD}_{t,r}(n)|$. As noted in the introduction, if $r=2$ and $t=0$ (respectively $t=1$), $\mathcal{PD}_{t,r}(n)$ is the set of PED (respectively POD) partitions of $n$.

Let  $\mathcal{P}_{\overline{2t},2r}(n)$ be the set of partitions of $n$ with no parts congruent to $2t$ modulo $2r$ and set $p_{\overline{2t},2r}(n):=|\mathcal{P}_{\overline{2t},2r}(n)|$.  
\begin{theorem}\label{1st_gen} 
If $n\geq 0$, $r\geq2$,  and $0 \leq t < r$, we have $pd_{t,r}(n)=p_{\overline{2t},2r}(n)$. 
\end{theorem}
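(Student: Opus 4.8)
The plan is to give both a generating function proof and a bijective proof. For the analytic proof, I would write down the generating function for $\pd_{t,r}(n)$ directly from the definition: parts congruent to $t \bmod r$ contribute a factor $\prod_{j\geq 0}(1+q^{t+jr})$ (they are distinct), parts not congruent to $t \bmod r$ (and positive) contribute the usual unrestricted factor $\prod_{m\geq 1,\; m\not\equiv t}(1-q^m)^{-1}$. So
\begin{equation*}
\sum_{n\geq 0}\pd_{t,r}(n)q^n \;=\; \frac{(-q^{t+r};q^r)_\infty\,(1+q^t)^{[t>0]}}{(q;q)_\infty}\cdot\frac{1}{\text{(removed factor for }m\equiv t)},
\end{equation*}
which after simplification becomes $\dfrac{(-q^{t};q^{r})_\infty}{(q;q)_\infty}\cdot (q^{t};q^{r})_\infty$ once one is careful about the $t=0$ case (where the "parts $\equiv 0$" are $r, 2r, \dots$ and there is no part $0$). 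Then I would use the identity $(-a;q)_\infty (a;q)_\infty = (a^2;q^2)_\infty$ with $a=q^{t}$ and $q\mapsto q^{r}$ to collapse the even-part-type product, turning the numerator-denominator combination into $\dfrac{(q^{2t};q^{2r})_\infty}{(q;q)_\infty}$, which is exactly $\sum_n p_{\overline{2t},2r}(n)q^n$ since $(q^{2t};q^{2r})_\infty / (q;q)_\infty$ removes precisely the parts congruent to $2t \bmod 2r$. The one subtlety to handle carefully is the boundary behavior at $t=0$: then $2t=0$ and $2r$, so $\mathcal P_{\overline 0,2r}$ has no parts $\equiv 0 \bmod 2r$, and the product manipulation should still go through with $a=1$ interpreted correctly (the factor $(1-1)$ does not appear because the smallest relevant part is $r$, not $0$).

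For the combinatorial proof, which I expect the authors also want, I would construct an explicit bijection $\mathcal{PD}_{t,r}(n)\to\mathcal P_{\overline{2t},2r}(n)$. Start with $\lambda\in\mathcal{PD}_{t,r}(n)$. Split $\lambda$ into the parts congruent to $t \bmod r$ (these form a distinct partition, call it $\alpha$) and the rest (call it $\beta$, with no constraint). The target forbids parts $\equiv 2t\bmod 2r$; note a part $\equiv 2t \bmod 2r$ is a part $\equiv 2t\bmod r$ after dividing... the cleaner route is a Glaisher/Sylvester-style merge-and-split: for each repeated part $b$ of $\beta$ that is $\equiv 2t \bmod 2r$, we cannot have it, so instead we should think of it the other way — given a target partition with no parts $\equiv 2t\bmod 2r$, produce a $\mathcal{PD}_{t,r}$ partition by repeatedly merging two equal parts congruent to $t \bmod r$ into one part (which is then $\equiv 2t \bmod 2r$ — forbidden in the target, consistent) until the parts $\equiv t\bmod r$ are distinct. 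I would set this up as: given $\mu\in\mathcal P_{\overline{2t},2r}(n)$, repeatedly replace two equal parts $\equiv t \bmod r$ by their sum; this terminates with distinct parts $\equiv t\bmod r$ and never creates a part $\equiv 2t\bmod 2r$ that collides with an existing one in a bad way, giving an element of $\mathcal{PD}_{t,r}(n)$; the inverse splits any part $\equiv 2t\bmod 2r$ in half repeatedly. Checking that the map is well-defined and invertible — in particular that the split/merge process is confluent and that no part outside residue $t\bmod r$ is ever touched — is the main technical point.

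The main obstacle I anticipate is making the combinatorial bijection genuinely clean in the general residue setting, because "merge two equal parts congruent to $t \bmod r$" produces a part congruent to $2t \bmod 2r$, and one must verify that iterating never stalls or loops and that the set of parts $\equiv t \bmod r$ of the output really are distinct while the output has no part $\equiv 2t \bmod 2r$ — this last claim needs the observation that a part can be $\equiv 2t \bmod 2r$ only if it arose from a merge, and merges are performed exhaustively. For the analytic side the only real care needed is the $t=0$ versus $t>0$ bookkeeping in the infinite products and the correct instance of $(a;q)_\infty(-a;q)_\infty=(a^2;q^2)_\infty$. I would present the generating-function argument first as the short proof and then give the bijection, matching the paper's stated practice of providing both analytic and combinatorial proofs.
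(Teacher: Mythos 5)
Your proposal is correct and follows essentially the same route as the paper: the same generating-function computation via $(q^t;q^r)_\infty(-q^t;q^r)_\infty=(q^{2t};q^{2r})_\infty$, and the same split/merge bijection (halve parts $\equiv 2t \pmod{2r}$ one way, pair up equal parts $\equiv t\pmod r$ the other way). The iteration/confluence issue you flag arises only when $t=0$, which the paper handles exactly as you anticipate, by treating that case separately through Glaisher's bijection applied to the parts divisible by $r$, while for $t\neq 0$ a single pass suffices since a merged part lands in the residue class $2t\not\equiv t\pmod r$.
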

\begin{proof}[Analytic proof]
\begin{align*}\sum_{n=0}^\infty pd_{t,r}(n)q^n & = \frac{(q^t;q^r)_\infty}{(q;q)_\infty}\cdot(-q^t;q^r)_\infty\\
 & = \frac{(q^{2t};q^{2r})_\infty}{(q;q)_\infty}\\ & =\sum_{n=0}^\infty p_{\overline{2t},2r}(n)q^n.
\end{align*}\end{proof}
\begin{proof}[Combinatorial proof]   We create a bijection  from $\lambda \in \PD_{t,r}(n)$ to $\mathcal{P}_{\overline{2t},2r}(n)$.  

\underline{Case 1:} $t\neq 0$. Let $\lambda \in \PD_{t,r}(n)$. Replace each part $\lambda_i$ of $\lambda$ that is congruent to $2t$ modulo $2r$ by two  parts  equal to $\lambda_i/2$ to obtain a partition $\mu\in \mathcal{P}_{\overline{2t},2r}(n)$. Conversely, if  $\mu\in \mathcal{P}_{\overline{2t},2r}(n)$, write $\mu=\alpha \cup \beta$ where $\alpha$ is a distinct partition into parts congruent to $t$ modulo $r$ and $\beta$ is a partition in which parts congruent to $t$ modulo $r$ appear with even multiplicity. Replace each pair of parts congruent to $t$ modulo $r$  in $\beta$ by a part equal to their sum to obtain a partition $\gamma$ with no parts congruent to $t$ modulo $r$. Then $\alpha\cup \gamma \in \PD_{t,r}(n)$.

\underline{Case 2:} $t= 0$. Let $\lambda \in \PD_{0,r}(n)$. As explained in Section \ref{s_2}, we write $\lambda=\lambda^{r \mid }\cup \lambda^{r\nmid }$, where $\lambda^{r\mid }$ (respectively $\lambda^{r\nmid }$) is the partition consisting of the parts of $\lambda$ that are divisible (respectively not divisible) by $r$. We write $\lambda^{r\mid}=r\mu$, where $\mu$ is a distinct partition.  The inverse of Glaisher's bijection, $\varphi_G^{-1}$, maps $\mu$   to an odd partition $\varphi_G^{-1}(\mu)$. Then, the parts of $r\varphi_G^{-1}(\mu)$ are divisible by $r$ but not by $2r$ and $r\varphi_G^{-1}(\mu)\cup \lambda^{r\nmid}$ is a partition in $\mathcal{P}_{\overline 0,2r}(n)$.  For the inverse, start with a partition $\eta$ of $n$ with parts not divisible by $2r$ and write $\eta=(\eta^{r\mid}, \eta^{r\nmid})$. Then $\eta^{r\mid}=r\nu$ where $\nu$ is an odd partition. Since $\varphi_G(\nu)$ is a partition with distinct parts, it follows that $r\varphi_G(\nu)\cup \eta^{r\nmid}$ is a partition in $\PD_{0,r}(n)$.
\end{proof}

Recall that $\mathcal B_{2r}(n)$ is the set of $2r$-regular partitions of $n$ and $b_{2r}(n)=|\mathcal B_{2r}(n)|$. 
\begin{corollary}\label{Th_prd} If $n\geq 0$ and $r\geq2$, we have $pd_{0,r}(n)=b_{2r}(n)$. 
\end{corollary}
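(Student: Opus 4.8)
This corollary is the special case $t=0$ of Theorem~\ref{1st_gen}, so the plan is simply to instantiate that theorem and reconcile notation. Setting $t=0$ in Theorem~\ref{1st_gen} gives $pd_{0,r}(n) = p_{\overline{0},2r}(n)$, since $2t = 0$. It then remains to observe that a partition of $n$ with no parts congruent to $0 \bmod 2r$ is precisely a $2r$-regular partition of $n$; that is, $\mathcal{P}_{\overline{0},2r}(n) = \mathcal{B}_{2r}(n)$, as noted just before Corollary~\ref{Th_prd}. Hence $p_{\overline{0},2r}(n) = b_{2r}(n)$, and combining the two equalities yields $pd_{0,r}(n) = b_{2r}(n)$.

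There is essentially no obstacle here: the only thing to be careful about is the bookkeeping that $2\cdot 0 = 0$ and that the overline/modulus conventions match, both of which are immediate. If one prefers a self-contained argument, one can alternatively invoke the Case~2 half of the combinatorial proof of Theorem~\ref{1st_gen}, which already exhibits an explicit bijection $\mathcal{PD}_{0,r}(n) \to \mathcal{P}_{\overline{0},2r}(n) = \mathcal{B}_{2r}(n)$ built from Glaisher's bijection $\varphi_G$; or one can read off the generating function identity
\begin{align*}
\sum_{n=0}^{\infty} pd_{0,r}(n)\, q^n = \frac{(q^{r};q^{r})_\infty}{(q;q)_\infty}(-q^{r};q^{r})_\infty = \frac{(q^{2r};q^{2r})_\infty}{(q;q)_\infty} = \sum_{n=0}^{\infty} b_{2r}(n)\, q^n,
\end{align*}
which is the $t=0$ case of the analytic proof above. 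Either way, the corollary follows at once.
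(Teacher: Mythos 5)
Your proposal is correct and matches the paper, which states this corollary without separate proof precisely because it is the $t=0$ case of Theorem~\ref{1st_gen} combined with the identification $\mathcal{P}_{\overline{0},2r}(n)=\mathcal{B}_{2r}(n)$ noted in Section~\ref{s_2}. Your generating-function display even correctly replaces $q^t$ by $q^r$ in the $t=0$ specialization, which is the right reading of the theorem's analytic proof in this case.
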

Corollary \ref{Th_prd} allows us to use  existing results  on the parity of the number of $\ell$-regular  partitions  to infer parity results for $pd_{0,r}(n)$. For example, it is well known that $b_4(n)$ is odd if and only if $n$ is a triangular number (see for example \cite[Theorem 3.1]{BM17}). Thus, as stated in \cite[Corollary 1.3]{M17}, $\ped(n)$ is odd if and only if $n$ is a triangular number. From the parity criterion for  $b_8(n)$ obtained  in \cite{CM} using modular forms (and also proved in \cite{KZ} via elementary methods), we have that  $pd_{0,4}(n)$ is odd if and only if $24n+7=p^{4a+1}M^2,$ for some prime $p\nmid M$ and some $a\geq 0$.

We denote by $\mathcal{QE}_{2t, 2r}(n)$  the set of distinct partitions of $n$ with even parts congruent to $2t$ modulo $2r$ and set $QE_{2t, 2r}(n):=|\mathcal{QE}_{2t, 2r}(n)|$. We have the following result. 
\begin{theorem}\label{prd_eo} For $n\geq 0$, $r\geq 2$, and $0\leq t<r$, we have $pd_{t,r, e-o}(n)=QE_{2t, 2r, e-o}(n)$. 
    \end{theorem}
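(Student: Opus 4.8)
The plan is to prove Theorem~\ref{prd_eo} by computing the two weighted generating functions $\sum_{n\ge 0} pd_{t,r,e-o}(n)\,q^n$ and $\sum_{n\ge 0} QE_{2t,2r,e-o}(n)\,q^n$ and checking that they are the same infinite product. First I would write down the generating function for $pd_{t,r,e-o}(n)$ directly from the definition of $\PD_{t,r}(n)$: a part $m$ with $m\equiv t\pmod r$ is absent or present exactly once and so contributes a factor $1-q^m$, while a part $m$ with $m\not\equiv t\pmod r$ may occur with any multiplicity $j\ge 0$ and contributes $\sum_{j\ge0}(-q^m)^j=(1+q^m)^{-1}$. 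Hence, taking $t\ge 1$ (the case $t=0$ is identical once $q^{t}$ and $q^{2t}$ are read as $q^{r}$ and $q^{2r}$; compare Corollary~\ref{Th_prd}),
\[
\sum_{n\ge 0} pd_{t,r,e-o}(n)\,q^n=\prod_{m\equiv t\,(r)}(1-q^m)\prod_{m\not\equiv t\,(r)}\frac{1}{1+q^m}=\frac{(q^t;q^r)_\infty\,(-q^t;q^r)_\infty}{(-q;q)_\infty},
\]
which is exactly the product appearing in the analytic proof of Theorem~\ref{1st_gen}, but with $(q;q)_\infty$ replaced by $(-q;q)_\infty$.

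Next I would simplify this. By $(a;q)_\infty(-a;q)_\infty=(a^2;q^2)_\infty$ the numerator is $(q^{2t};q^{2r})_\infty$, and since $(-q;q)_\infty=(q^2;q^2)_\infty/(q;q)_\infty$ the whole expression becomes $\dfrac{(q;q)_\infty}{(q^2;q^2)_\infty}\,(q^{2t};q^{2r})_\infty$. On the other side, a partition counted by $QE_{2t,2r,e-o}(n)$ has all parts distinct, its odd parts contributing factors $1-q^m$ and its even parts — which are forced to be $\equiv 2t\pmod{2r}$ — contributing factors $1-q^m$, every other even part being forbidden; thus
\[
\sum_{n\ge 0} QE_{2t,2r,e-o}(n)\,q^n=\prod_{m \text{ odd}}(1-q^m)\prod_{m\equiv 2t\,(2r)}(1-q^m)=(q;q^2)_\infty\,(q^{2t};q^{2r})_\infty=\frac{(q;q)_\infty}{(q^2;q^2)_\infty}\,(q^{2t};q^{2r})_\infty.
\]
This is the same product, so comparing coefficients of $q^n$ finishes the proof.

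On the analytic side there is no real obstacle beyond keeping the Pochhammer bookkeeping straight in the degenerate case $t=0$ (where $(q^t;q^r)_\infty$ and $(q^{2t};q^{2r})_\infty$ must be read as $(q^r;q^r)_\infty$ and $(q^{2r};q^{2r})_\infty$). The part I expect to be genuinely delicate is producing the combinatorial proof that the paper's style asks for: one would want a sign-reversing involution on $\PD_{t,r}(n)$ whose fixed points correspond, with their signs, to $\mathcal{QE}_{2t,2r}(n)$. The natural move is to merge two equal parts not congruent to $t\pmod r$ into their sum (and to split back), tracking the change in the parity of the length; the difficulty is that such a merge need not land outside the distinguished residue class and need not preserve distinctness, and the case $t=0$ has to be treated separately by applying Glaisher's bijection $\varphi_G$ to the parts divisible by $r$, exactly as in the combinatorial proof of Theorem~\ref{1st_gen}. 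Making this involution well defined uniformly in $t$ is the main hurdle.
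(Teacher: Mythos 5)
Your analytic argument is correct and is essentially the paper's own proof: the same weighted products $(q^t;q^r)_\infty(-q^t;q^r)_\infty/(-q;q)_\infty=(q^{2t};q^{2r})_\infty(q;q^2)_\infty$, with your factorization $(-q;q)_\infty=(q^2;q^2)_\infty/(q;q)_\infty$ being just Euler's identity \eqref{Euler} in another guise, and the $t=0$ convention handled as you state. As for the combinatorial step you flag as delicate, the paper resolves it uniformly in $t$ with a Gupta-style involution (merge the largest repeated part $a$ into $2a$ or split the largest even part $b\not\equiv 2t\pmod{2r}$, according to whether $2a>b$), the key point being that a repeated part satisfies $a\not\equiv t\pmod r$, hence $2a\not\equiv 2t\pmod{2r}$, so the merge never lands in the forbidden residue class and no separate $t=0$ case is needed.
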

    \begin{proof}[Analytic proof]\begin{align*}\sum_{n=0}^\infty pd_{t,r, e-o}(n)q^n & = \frac{(-q^t;q^r)_\infty}{(-q;q)_\infty}\cdot (q^t;q^r)_\infty\\
 & = \frac{(q^{2t};q^{2r})_\infty}{(-q;q)_\infty}\\ & = (q^{2t};q^{2r})_\infty(q;q^2)_\infty\\ & =\sum_{n=0}^\infty QE_{2t,2r, e-o}(n)q^n.
\end{align*} In the penultimate equality we used Euler's theorem \cite[Identity (1.2.5)]{A98} \begin{equation}\label{Euler}\frac{1}{(q;q^2)_\infty}=(-q;q)_\infty.\end{equation}
    \end{proof}
    \begin{proof}[Combinatorial proof] We can also prove the theorem adapting the combinatorial argument of \cite{G75}. Let $\mathcal S(n)$ be the subset of partitions $\pi\in \mathcal{PD}_{t,r}(n)$   such that there is at least one repeated part or at least one even part not congruent to $2t$ modulo $2r$. Then $$\mathcal{PD}_{t,r}(n)\setminus \mathcal S(n)=\mathcal{QE}_{2t, 2r}(n).$$
    We define a sign reversing involution $\psi$ on $\mathcal S(n)$ as follows. If $\pi\in \mathcal S(n)$, we denote by $a=a(\pi)$ the largest repeated part of $\pi$  and by $b=b(\pi)$ the largest even part of $\pi$ that is not congruent to $2t$ modulo $2r$. If $a$ or $b$ do not exist, we set  $a=0$ or $b=0$, respectively. 

    If $2a>b$, define $\psi(\pi)=\pi\setminus(a,a)\cup(2a)$. Since $a\not\equiv t\mod r$, we have $2a\not\equiv 2t\mod{2r}$ and $\psi(\pi)\in \mathcal S(n)$.

    If $2a\leq b$, define $\psi(\pi)=\pi\setminus(b)\cup(b/2,b/2)$. Since $b\not \equiv 2t\mod{2r}$, we have $b/2\not\equiv t\mod r$ and $\psi(\pi)\in \mathcal S(n)$.

    Clearly, $\psi$ is an involution of $\mathcal S(n)$ that reverses the parity of $\ell(\pi)$. This completes the combinatorial proof. 
    \end{proof}
   
For the remainder of this section, we restrict our attention to the case $t=0$. 
\begin{corollary} \label{Th_prd_eo} If $n\geq 0$ and $r\geq 2$, $pd_{0,r, e-o}(n)=QE_{0, 2r, e-o}(n)$. 
\end{corollary}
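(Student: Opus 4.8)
Corollary \ref{Th_prd_eo} is the special case $t=0$ of Theorem \ref{prd_eo}, so the plan is simply to specialize. First I would observe that setting $t=0$ in Theorem \ref{prd_eo} immediately gives $pd_{0,r,e-o}(n)=QE_{0,2r,e-o}(n)$, since $\mathcal{PD}_{t,r}(n)$ becomes $\mathcal{PD}_{0,r}(n)$ and $\mathcal{QE}_{2t,2r}(n)$ becomes $\mathcal{QE}_{0,2r}(n)$ — the latter being distinct partitions of $n$ whose even parts are all divisible by $2r$, which matches the notation $\mathcal{QE}_{0,2r}(n)$ from the notation list. Thus there is essentially nothing new to prove.

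If one wants a self-contained argument, the analytic route is fastest: the generating function for $pd_{0,r,e-o}(n)$ is
\[
\frac{(-q^0;q^r)_\infty}{(-q;q)_\infty}\cdot(q^0;q^r)_\infty,
\]
but since $(1-1)=0$ appears, one must instead work directly with the definition of $\mathcal{PD}_{0,r}$: parts divisible by $r$ are distinct (contributing $(-q^r;q^r)_\infty$ with the sign $(-1)^{\ell}$ built in, i.e. replace $q^r$ by $-q^r$), and parts not divisible by $r$ are unrestricted (contributing $\prod_{r\nmid k}(1+q^k+q^{2k}+\cdots)$ with signs, i.e. $1/(-q^k;q)$-type factors). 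Cleanly, $\sum pd_{0,r,e-o}(n)q^n = (-q^r;q^r)_\infty \cdot \dfrac{(q^r;q^r)_\infty}{(q;q)_\infty}\Big|_{q\to \text{signed}}$; carrying out the same manipulation as in the proof of Theorem \ref{prd_eo} with $t=0$ yields $(q^{0};q^{2r})_\infty (q;q^2)_\infty$ — but again the $2t=0$ factor needs care, and in fact the correct reading is that the factor $(q^{2t};q^{2r})_\infty$ in Theorem \ref{prd_eo} should be interpreted, at $t=0$, as the generating function weight for even parts being restricted to multiples of $2r$, which is exactly what $\mathcal{QE}_{0,2r}$ encodes. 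I would therefore present this as a direct corollary rather than repeating the generating-function computation.

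Alternatively, the combinatorial proof of Theorem \ref{prd_eo} restricts verbatim: with $t=0$, the set $\mathcal{S}(n)$ consists of partitions in $\mathcal{PD}_{0,r}(n)$ with at least one repeated part or at least one even part not divisible by $2r$, its complement in $\mathcal{PD}_{0,r}(n)$ is $\mathcal{QE}_{0,2r}(n)$, and the sign-reversing involution $\psi$ (merge the two largest copies of the largest repeated part $a$ if $2a>b$, else split the largest bad even part $b$) works exactly as stated, using only that $a\not\equiv 0 \pmod r \Rightarrow 2a\not\equiv 0\pmod{2r}$ and $b\not\equiv 0 \pmod{2r}\Rightarrow b/2 \not\equiv 0 \pmod r$, both of which hold. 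The only ``obstacle'' is the notational edge case $t=0$, where one must check that the $2t$-indexed factor degenerates consistently on both sides of Theorem \ref{prd_eo}; once that is verified, Corollary \ref{Th_prd_eo} follows with no additional work, so I would state:

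\begin{proof}
This is the case $t=0$ of Theorem \ref{prd_eo}, noting that $\mathcal{QE}_{0,2r}(n)$ is precisely the set of distinct partitions of $n$ whose even parts are all congruent to $0$ modulo $2r$.
\end{proof}
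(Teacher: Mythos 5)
Your proposal is correct and matches the paper exactly: the paper states Corollary \ref{Th_prd_eo} with no separate proof, treating it as the immediate $t=0$ specialization of Theorem \ref{prd_eo} (whose hypothesis $0\leq t<r$ includes $t=0$, and whose combinatorial involution works verbatim in that case, as you note). Your worry about the degenerate $(q^{0};q^{r})_\infty$ factor is a fair observation about the analytic formulas, but it does not affect the corollary, and your final one-line citation of the theorem is exactly how the paper handles it.
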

Since $\pd_{0, r}(n)=b_{2r}(n)$, it is natural to consider the analogous result for $2r$-regular partitions. We note that the  case $r=2$ of the result below appears in \cite[Theorem 1.1(b)]{BM21}.

\begin{proposition}\label{Th_b2r_eo} For $n\geq 0$, $r\geq 2$ we have 
$b_{2r,e-o}(n) = (-1)^nQE_{0,2r}(n)$.
\end{proposition}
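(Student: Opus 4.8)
The plan is to attack this via generating functions, mirroring the analytic proofs already given for Theorem \ref{1st_gen} and Theorem \ref{prd_eo}. The left-hand side $b_{2r,e-o}(n)$ counts $2r$-regular partitions weighted by $(-1)^{\ell(\lambda)}$, so its generating function is obtained from the product formula for $b_{2r}(n)$ by replacing each factor $\frac{1}{1-q^k}$ (for $k \not\equiv 0 \bmod 2r$) with $\frac{1}{1+q^k}$; that is,
\begin{align*}
\sum_{n=0}^\infty b_{2r,e-o}(n)q^n = \prod_{\substack{k\geq 1\\ 2r\nmid k}}\frac{1}{1+q^k} = \frac{(-q;q)_\infty}{(-q^{2r};q^{2r})_\infty}.
\end{align*}
On the right-hand side, $QE_{0,2r}(n)$ counts distinct partitions of $n$ whose even parts are all divisible by $2r$, so its generating function is $(-q;q^2)_\infty\,(-q^{2r};q^{2r})_\infty$ (odd parts distinct and unrestricted in residue, even parts distinct and $\equiv 0 \bmod 2r$).

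The heart of the argument is therefore to show that the coefficientwise substitution $q \mapsto -q$ carries one product into the other up to the claimed sign. Concretely, I would replace $q$ by $-q$ in $\sum b_{2r,e-o}(n)q^n$: since $2r$ is even, $(-q^{2r};q^{2r})_\infty$ is invariant, while $(-q;q)_\infty \mapsto (q;-q)_\infty$. Then I would simplify $(q;-q)_\infty = \prod_{k\geq 1}(1-(-q)^k) = \prod_{m\geq 1}(1+q^{2m-1})\prod_{m\geq 1}(1-q^{2m}) = (-q;q^2)_\infty (q^2;q^2)_\infty$, and finally invoke Euler's identity \eqref{Euler} in the form $(q^2;q^2)_\infty = (q;q)_\infty/(q;q^2)_\infty$ together with $(q;q)_\infty/(q^2;q^2)_\infty = (q;q^2)_\infty$ to convert $(q^2;q^2)_\infty$ into the factor $(-q^{2r};q^{2r})_\infty$ needed to match $(-q;q^2)_\infty(-q^{2r};q^{2r})_\infty$. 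Comparing coefficients of $q^n$ on both sides and tracking the $(-1)^n$ introduced by $q\mapsto -q$ then yields $b_{2r,e-o}(n) = (-1)^n QE_{0,2r}(n)$.

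The step I expect to be the main obstacle is bookkeeping the infinite products correctly — in particular, making sure that the factor $(-q^{2r};q^{2r})_\infty$ survives unchanged under $q\mapsto -q$ (which uses $2r$ even) and that the leftover $(q^2;q^2)_\infty$ or its reciprocal matches up cleanly, rather than leaving a spurious factor. A safe way to organize this is to work instead with the ratio: show directly that
\begin{align*}
\frac{(-q;q)_\infty}{(-q^{2r};q^{2r})_\infty}\bigg|_{q\to -q} = (-q;q^2)_\infty\,(-q^{2r};q^{2r})_\infty,
\end{align*}
by expanding $(-q;q)_\infty = (-q;q^2)_\infty (-q^2;q^2)_\infty$ and repeatedly splitting off odd-index factors, which reduces everything to the single nontrivial input \eqref{Euler}. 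A combinatorial proof along the lines of the sign-reversing involution in Theorem \ref{prd_eo} is also plausible: one would pair up $2r$-regular partitions not of the special "distinct with even parts $\equiv 0 \bmod 2r$" form via a merge/split involution on the largest offending part, but I would present the generating-function proof as the primary one and remark on the combinatorial alternative.
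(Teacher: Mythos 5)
Your overall strategy --- compute the generating function of $b_{2r,e-o}(n)$, compare it with that of $QE_{0,2r}(n)$, and absorb the sign through the substitution $q\mapsto -q$ --- is sound and is essentially the paper's analytic proof in a different packaging (the paper writes $\sum_{n\geq 0} b_{2r,e-o}(n)q^n=\frac{(-q^{2r};q^{2r})_\infty}{(-q;q)_\infty}=(-q^{2r};q^{2r})_\infty(q;q^2)_\infty$ and disposes of the $(-1)^n$ by noting that the number of odd parts of a partition in $\mathcal{QE}_{0,2r}(n)$ has the same parity as $n$). However, your execution has a concrete error that breaks the argument as written: the Pochhammer form of your left-hand generating function is inverted. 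From your (correct) product $\prod_{2r\nmid k}(1+q^k)^{-1}$ one obtains $\frac{(-q^{2r};q^{2r})_\infty}{(-q;q)_\infty}$, not $\frac{(-q;q)_\infty}{(-q^{2r};q^{2r})_\infty}$. Consequently the identity you propose to verify, namely $\frac{(-q;q)_\infty}{(-q^{2r};q^{2r})_\infty}\big|_{q\to-q}=(-q;q^2)_\infty(-q^{2r};q^{2r})_\infty$, is false: its left side begins $1-q+\cdots$ while its right side begins $1+q+\cdots$, so the coefficients already disagree at $q^1$.

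The intermediate manipulations also misfire: $(q;-q)_\infty=\prod_{k\geq 0}\bigl(1-q(-q)^k\bigr)=(q;q^2)_\infty(-q^2;q^2)_\infty$, which is not $\prod_{k\geq 1}\bigl(1-(-q)^k\bigr)=(-q;q^2)_\infty(q^2;q^2)_\infty$, and there is no identity that converts the leftover $(q^2;q^2)_\infty$ into the factor $(-q^{2r};q^{2r})_\infty$; that step cannot be justified. Once the generating function is corrected, your plan does close, and in essentially the way you intended: since $2r$ is even, $(-q^{2r};q^{2r})_\infty$ is invariant under $q\mapsto -q$, while $(-q;q)_\infty\mapsto (q;q^2)_\infty(-q^2;q^2)_\infty$, so Euler's identity \eqref{Euler} in the form $\frac{1}{(q;q^2)_\infty}=(-q;q^2)_\infty(-q^2;q^2)_\infty$ yields $\frac{(-q^{2r};q^{2r})_\infty}{(-q;q)_\infty}\big|_{q\to-q}=(-q;q^2)_\infty(-q^{2r};q^{2r})_\infty=\sum_{n\geq 0}QE_{0,2r}(n)q^n$, whence $(-1)^n b_{2r,e-o}(n)=QE_{0,2r}(n)$. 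Note also that the paper gives, in addition, a full combinatorial proof (a sign-reversing involution followed by a Glaisher-type bijection, split into the cases $r$ odd and $r$ even), which your closing sentence only gestures at.
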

\begin{proof}[Analytic proof] As in the proof of Theorem \ref{prd_eo}, using Euler's identity \eqref{Euler}, we obtain 
    \begin{align*}
        \sum_{n=0}^\infty b_{2r,e-o}(n)q^n& =\frac{(-q^{2r};q^{2r})_\infty}{(-q;q)_\infty} = (-q^{2r};q^{2r})_\infty(q;q^2)_\infty.
    \end{align*}
    Since the number of odd parts in a partition has the same parity as the size of the partition, the proof is complete. 
\end{proof}

\begin{proof}[Combinatorial proof] \underline{Case 1:} $r$ is  odd. We proceed as in the proof of Theorem \ref{prd_eo}. We take $\mathcal S(n)$ to be the subset of partitions $\pi\in\mathcal B_{2r}(n)$ such that $\pi$ has at least one part not congruent to $0$ modulo $r$ repeated or at least one even part.  We denote by $a=a(\pi)$ the largest repeated part of $\pi$ that is not congruent to $0$ modulo $r$  and by $b=b(\pi)$ the largest even part of $\pi$. If $a$ or $b$ do not exist, we set  $a=0$ or $b=0$, respectively.  We define a sign reversing involution $\psi$ in $\mathcal S(n)$ as in the proof of Theorem \ref{prd_eo}. This shows that $b_{2r,e-o}(n)$ equals the number of partitions $\pi$ of $n$ into  odd parts such that only parts congruent to $0$ modulo $r$ may repeat and $\ell(\pi)$ is even minus the number of partitions $\pi$ of $n$ into  odd parts such that only parts congruent to $0$ modulo $r$ may repeat and $\ell(\pi)$ is odd. Since for an odd partition $\pi$, $\ell(\pi)\equiv n\mod 2$, it follows that $b_{2r,e-o}(n)$ equals    $(-1)^n$ times  the number of partitions of $n$ into odd parts such that only parts congruent to $0$ modulo $r$ may repeat. Applying Glaisher's bijection to each partition $\pi$ of $n$ into odd parts such that only parts congruent to $0$ modulo $r$ may repeat completes the combinatorial proof.

\underline{Case 2:} $r$ is  even. For $r=2$ a combinatorial proof is given in \cite[Theorem 1.1(b)]{BM21}. The case $r=2$ of the proof below is different from the proof in \cite{BM21}.

Let $\mathcal S(n)$ be the subset of partitions   $\pi\in \mathcal B_{2r}(n)$ such that  $\pi$ has at least one part that is not congruent to $0$ modulo $r$ repeated or at least one even part with odd multiplicity. We denote by $a=a(\pi)$ the largest repeated part of $\pi$ that is not congruent to $0$ modulo $r$ and by $b=b(\pi)$ the largest even part with odd multiplicity. If $a$ or $b$ do not exist, we set  $a=0$ or $b=0$, respectively.  We define a sign reversing involution $\psi$ on $\mathcal S(n)$ as in the proof of Theorem \ref{prd_eo}. The partitions in $\mathcal B_{2r}(n)\setminus \mathcal S(n)$ are such that only parts congruent to $0$ modulo $r$ may be repeated and even parts appear with even multiplicity.  Therefore  even parts must be congruent to $0$ modulo $r$ (but not congruent to $0$ modulo $2r$).
Let $\widetilde{\mathcal{QE}}_{r,2r}(n)$ be the set of partitions of $n$ such that odd parts are distinct and  even parts are congruent to $r$ modulo $2r$ and have even multiplicity. Then $\mathcal B_{2r}(n)\setminus \mathcal S(n)= \widetilde{\mathcal{QE}}_{r,2r}(n)$ and the argument above shows that $b_{2r, e-o}(n)=\widetilde{QE}_{r,2r,e-o}(n)$.

Next, we define a bijection  between $\mathcal{QE}_{0,2r}(n)$ and $\widetilde{\mathcal{QE}}_{r,2r}(n)$. Start with  $\lambda=(\lambda^{2\mid}, \lambda^{2\nmid})\in \mathcal{QE}_{0,2r}(n)$. For each part $2rt$ in  $\lambda^{2\mid}$, we  write $2rt=2^irc$ with $c$ odd and $i\geq 1$ and replace part $2^irc$ by $2^i$ parts equal to $rc$. We denote the obtained partition by $\mu$. Then $rc\equiv r\mod 2r$ occurs with even multiplicity in $\mu$ and hence $\mu\in \widetilde{\mathcal{QE}}_{r,2r}(n)$. Conversely, if  $\mu\in \widetilde{\mathcal{QE}}_{r,2r}(n)$, then $\lambda:=\varphi_G(\mu)\in \mathcal{QE}_{0,2r}(n)$ is the partition obtained by iteratively merging parts congruent to $r$ modulo $2r$ in pairs until we obtain a distinct partition.

If $\lambda \in \widetilde{\mathcal{QE}}_{r,2r}(n)$, since even parts occur with even multiplicity, it follows that $\ell(\lambda)\equiv n\mod 2$. 
Hence $$b_{2r, e-o}(n)=\widetilde{QE}_{r,2r, e-o}(n)= (-1)^n \widetilde{QE}_{r,2r}(n)=(-1)^nQE_{0,2r}(n).$$
\end{proof}

\begin{corollary} For $n\geq 0$ and $r\geq 2$, $b_{2r}(n)$, $pd_{0,r}(n)$ and $QE_{0,2r}(n)$ have the same parity.  
In particular, 
$QE_{0,4}(n)$ is odd if and only if $n$ is a triangular number and $QE_{0,8}(n)$ is odd if and only if $24n+7=p^{4a+1}M^2,$ for some prime $p\nmid M$ and some $a\geq 0$.
\end{corollary}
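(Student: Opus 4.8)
The plan is to derive everything as an immediate consequence of the chain of identities already established. From Corollary \ref{Th_prd} we have $pd_{0,r}(n)=b_{2r}(n)$, and from Proposition \ref{Th_b2r_eo} we have $b_{2r,e-o}(n)=(-1)^nQE_{0,2r}(n)$. The first observation is that for \emph{any} set of partitions, the quantity counted with the weight $(-1)^{\ell(\lambda)}$ and the unweighted count have the same parity modulo $2$, since flipping the sign of a term changes a sum by an even amount. Applying this to $\mathcal B_{2r}(n)$ gives $b_{2r}(n)\equiv b_{2r,e-o}(n)\pmod 2$, and hence $b_{2r}(n)\equiv (-1)^nQE_{0,2r}(n)\equiv QE_{0,2r}(n)\pmod 2$. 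Combined with $pd_{0,r}(n)=b_{2r}(n)$, this shows all three of $b_{2r}(n)$, $pd_{0,r}(n)$, and $QE_{0,2r}(n)$ have the same parity, which is the first sentence of the corollary.

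For the two specific claims, I would specialize to $r=2$ and $r=4$. When $r=2$, $2r=4$, so $b_{2r}(n)=b_4(n)$; the paper has already recalled (citing \cite[Theorem 3.1]{BM17}) that $b_4(n)$ is odd if and only if $n$ is a triangular number. By the parity equivalence just established, $QE_{0,4}(n)$ is odd if and only if $n$ is a triangular number. When $r=4$, $2r=8$, so $b_{2r}(n)=b_8(n)$; the paper has recalled (citing \cite{CM} and \cite{KZ}) that $b_8(n)$ is odd if and only if $24n+7=p^{4a+1}M^2$ for some prime $p\nmid M$ and some $a\geq 0$. Again by the parity equivalence, the same criterion governs the oddness of $QE_{0,8}(n)$.

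There is essentially no obstacle here: the corollary is purely a bookkeeping consequence of Corollary \ref{Th_prd}, Proposition \ref{Th_b2r_eo}, and the trivial remark that weighting a finite count by $\pm 1$ preserves parity. The only point requiring a word of care is that $(-1)^n$ is a unit modulo $2$, so it can be dropped when passing to parities; I would state this explicitly so the reader sees why the factor $(-1)^n$ in Proposition \ref{Th_b2r_eo} does not affect the conclusion. The two arithmetic characterizations are then simply imported verbatim from the cited literature via the identity $pd_{0,r}(n)=b_{2r}(n)$ with $r=2$ and $r=4$, exactly as was already done for $\ped(n)=pd_{0,2}(n)$ and $pd_{0,4}(n)$ in the discussion following Corollary \ref{Th_prd}.
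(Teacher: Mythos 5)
Your proposal is correct and matches the paper's intended derivation: the corollary is stated there without proof precisely because it follows, as you argue, from $pd_{0,r}(n)=b_{2r}(n)$, the identity $b_{2r,e-o}(n)=(-1)^nQE_{0,2r}(n)$, the observation that the signed and unsigned counts agree modulo $2$, and the cited parity criteria for $b_4(n)$ and $b_8(n)$ specialized at $r=2$ and $r=4$. No issues.
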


Next, we consider recurrence relations involving $\pd_{0,r}(n)$. 
In \cite[Theorem 7]{dSD} the authors express the number of $\ell$-regular partitions in terms of $p(n-\ell G_k)$, where $G_k$ are generalized pentagonal numbers. Then,  Corollary \ref{Th_prd} leads to the following relation. 
\begin{theorem} Let $r\geq 2$. For any  $n\geq 0$ we have $$ pd_{0,r}(n)=p(n)+\sum_{j=1}^\infty (-1)^j\left( p(n-2rj(3j-1)/2)+ p(n-2rj(3j+1)/2)\right).$$
    \end{theorem}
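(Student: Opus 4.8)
The plan is to combine Corollary~\ref{Th_prd}, which gives $pd_{0,r}(n)=b_{2r}(n)$, with Euler's Pentagonal Number Theorem applied to the generating function for $2r$-regular partitions. First I would write
\[
\sum_{n=0}^\infty pd_{0,r}(n)q^n=\sum_{n=0}^\infty b_{2r}(n)q^n=\frac{(q^{2r};q^{2r})_\infty}{(q;q)_\infty},
\]
the second equality being the standard product for $2r$-regular partitions (no part divisible by $2r$). Then I would multiply both sides by $(q;q)_\infty^{-1}$ in reverse: that is, observe that $(q^{2r};q^{2r})_\infty/(q;q)_\infty=(q^{2r};q^{2r})_\infty\cdot\sum_{m\geq0}p(m)q^m$, and expand the finite-looking numerator via Euler's identity
\[
(q^{2r};q^{2r})_\infty=\sum_{j=-\infty}^\infty(-1)^jq^{2r\cdot j(3j-1)/2}=1+\sum_{j=1}^\infty(-1)^j\left(q^{2rj(3j-1)/2}+q^{2rj(3j+1)/2}\right),
\]
where the last rewriting just splits the bilateral sum into $j$ and $-j$ (noting $(-j)(3(-j)-1)/2=j(3j+1)/2$).

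The key step is then to equate coefficients of $q^n$ on both sides of
\[
\sum_{n=0}^\infty pd_{0,r}(n)q^n=\left(1+\sum_{j=1}^\infty(-1)^j\left(q^{2rj(3j-1)/2}+q^{2rj(3j+1)/2}\right)\right)\sum_{m=0}^\infty p(m)q^m.
\]
Reading off the coefficient of $q^n$ gives exactly
\[
pd_{0,r}(n)=p(n)+\sum_{j=1}^\infty(-1)^j\left(p\!\left(n-2rj(3j-1)/2\right)+p\!\left(n-2rj(3j+1)/2\right)\right),
\]
with the convention $p(k)=0$ for $k<0$ truncating the sum to finitely many terms, which is the claimed identity. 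Alternatively, one could give a purely combinatorial proof: a $2r$-regular partition $\mu$ of $n$ is, after applying $\varphi_G$-type reasoning or directly, equivalent to a pair consisting of an arbitrary partition together with a ``correction'' encoded by the $(q^{2r};q^{2r})_\infty$ factor; but the generating-function route is cleanest and the excerpt's phrasing (``leads to the following relation,'' citing \cite{dSD} for the general $\ell$-regular case) suggests this is the intended argument.

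I do not expect a genuine obstacle here; the only care needed is bookkeeping. Specifically I would be careful that the generating function for $b_{2r}(n)$ really is $(q^{2r};q^{2r})_\infty/(q;q)_\infty$ (each non-multiple-of-$2r$ part contributes a factor $1/(1-q^k)$, and these are precisely the $k$ not divisible by $2r$, giving $\prod_{2r\nmid k}(1-q^k)^{-1}=(q^{2r};q^{2r})_\infty/(q;q)_\infty$), and that the index substitution turning Euler's bilateral pentagonal sum into the stated one-sided sum over $j\geq1$ correctly pairs $j(3j-1)/2$ with $j(3j+1)/2$ after scaling by $2r$. Everything else is routine coefficient extraction, so the write-up can be quite short.
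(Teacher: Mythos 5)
Your proposal is correct and takes essentially the same route as the paper: the paper simply combines Corollary~\ref{Th_prd} ($pd_{0,r}(n)=b_{2r}(n)$) with the formula of \cite[Theorem 7]{dSD} expressing $\ell$-regular partition numbers in terms of $p(n-\ell G_k)$, which is exactly the identity you rederive by applying Euler's pentagonal number theorem (with $q\mapsto q^{2r}$) to the numerator of $(q^{2r};q^{2r})_\infty/(q;q)_\infty$ and extracting coefficients. Your explicit expansion is a correct substitute for the citation, and the bookkeeping (splitting the bilateral pentagonal sum into the $j$ and $-j$ terms) is handled accurately.
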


    If we replace $4$ by $2r$ in the proof of \cite[Theorem 3.2]{FGK},  we obtain the following recurrence relation. 

    \begin{theorem} \label{pent} Let $r\geq 2$. For any  $n\geq 0$ we have
    $$\sum_{j=-\infty}^\infty (-1)^{j} pd_{0,r}(n-j(3j+1)/2)=\begin{cases} (-1)^k & \text{ if } n=rk(3k+1), k \in \mathbb Z\\ 0&  \text{ otherwise. }  \end{cases}$$
        
    \end{theorem}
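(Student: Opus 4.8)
The plan is to start from the product form of the generating function for $pd_{0,r}(n)$ established in the analytic proof of Theorem \ref{1st_gen}, namely
\begin{equation*}
\sum_{n=0}^\infty pd_{0,r}(n)\,q^n = \frac{(q^{2r};q^{2r})_\infty}{(q;q)_\infty}.
\end{equation*}
Multiplying both sides by $(q;q)_\infty$ and invoking Euler's pentagonal number theorem, $(q;q)_\infty = \sum_{j=-\infty}^\infty (-1)^j q^{j(3j+1)/2}$, I would obtain
\begin{equation*}
\left(\sum_{n=0}^\infty pd_{0,r}(n)\,q^n\right)\left(\sum_{j=-\infty}^\infty (-1)^j q^{j(3j+1)/2}\right) = (q^{2r};q^{2r})_\infty.
\end{equation*}
Comparing coefficients of $q^n$ on the left gives exactly the left-hand side $\sum_{j=-\infty}^\infty (-1)^j pd_{0,r}(n-j(3j+1)/2)$ of the claimed identity (with the convention that $pd_{0,r}(m)=0$ for $m<0$).

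The remaining task is to identify the coefficient of $q^n$ in $(q^{2r};q^{2r})_\infty$. Applying Euler's pentagonal number theorem again, this time with $q$ replaced by $q^{2r}$, yields
\begin{equation*}
(q^{2r};q^{2r})_\infty = \sum_{k=-\infty}^\infty (-1)^k q^{2r\cdot k(3k+1)/2} = \sum_{k=-\infty}^\infty (-1)^k q^{rk(3k+1)}.
\end{equation*}
Hence the coefficient of $q^n$ is $(-1)^k$ if $n = rk(3k+1)$ for some $k\in\mathbb Z$ and $0$ otherwise. (One should note in passing that the map $k\mapsto rk(3k+1)$ is injective on $\mathbb Z$, so the case $n=rk(3k+1)$ determines $k$ uniquely and there is no ambiguity in the sign $(-1)^k$; this follows because $k(3k+1)$ takes distinct nonnegative values as $k$ ranges over $\mathbb Z$, these being the generalized pentagonal numbers scaled by $2$.) This establishes the stated recurrence.

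There is no serious obstacle here — the result is essentially a double application of the pentagonal number theorem combined with the known generating function — so the proof is short. The only point requiring a sentence of care is the injectivity remark ensuring the right-hand side is well defined, and the bookkeeping convention $pd_{0,r}(m)=0$ for negative $m$, which makes the formally infinite sum on the left a finite one. If a combinatorial proof were desired instead, one could adapt the sign-reversing-involution argument underlying the pentagonal number theorem to partitions in $\mathcal{PD}_{0,r}$ paired with a pentagonal-number part, but the analytic route above is the most direct and is the one I would present.
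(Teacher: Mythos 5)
Your proof is correct, and it is essentially the paper's intended argument: the paper simply cites the proof of Theorem 3.2 of Fink--Guy--Krusemeyer with $4$ replaced by $2r$, which is exactly your computation — multiply the generating function $(q^{2r};q^{2r})_\infty/(q;q)_\infty$ for $pd_{0,r}(n)$ by $(q;q)_\infty$ and apply Euler's pentagonal number theorem twice. Your remarks on the convention $pd_{0,r}(m)=0$ for $m<0$ and the injectivity of $k\mapsto rk(3k+1)$ are fine points handled correctly.
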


In \cite{BW23} we gave two combinatorial proofs of Theorem \ref{pent} in the case $r=2$. The first proof makes use of Glaisher's identity. Replacing all partitions with parts repeated at most $3$ times by partitions with parts repeated at most $2r-1$ times, the proof can be easily generalized to a combinatorial proof of Theorem \ref{pent}.

The next theorem gives a recurrence relation involving the sequences $b_r(n)$ and $b_{2r}(n)$.

\begin{theorem} \label{Th_pd_b} 
For $n\geq 0$, we have $$\sum_{j=-\infty}^\infty (-1)^{j} b_{2r}(n-rj^2)= \sum_{j=-\infty}^\infty (-1)^j b_r(n-rj(3j+1)/2).$$
   \end{theorem}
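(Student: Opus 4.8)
The plan is to prove Theorem \ref{Th_pd_b} by translating both sides into $q$-series identities and reducing everything to a single application of Jacobi's triple product and Euler's pentagonal number theorem. First I would recall the generating functions
\[
\sum_{n\geq 0} b_{2r}(n)q^n = \frac{(q^{2r};q^{2r})_\infty}{(q;q)_\infty}, \qquad \sum_{n\geq 0} b_r(n)q^n = \frac{(q^{r};q^{r})_\infty}{(q;q)_\infty}.
\]
The left-hand side of the claimed identity is the coefficient extraction of the product $\left(\sum_j (-1)^j q^{rj^2}\right)\cdot\sum_n b_{2r}(n)q^n$, while the right-hand side is the coefficient of $q^n$ in $\left(\sum_j (-1)^j q^{rj(3j+1)/2}\right)\cdot\sum_n b_r(n)q^n$. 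So the theorem is equivalent to the $q$-series identity
\[
\left(\sum_{j=-\infty}^\infty (-1)^j q^{rj^2}\right)\frac{(q^{2r};q^{2r})_\infty}{(q;q)_\infty} = \left(\sum_{j=-\infty}^\infty (-1)^j q^{rj(3j+1)/2}\right)\frac{(q^{r};q^{r})_\infty}{(q;q)_\infty}.
\]

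Next I would clear the common factor $1/(q;q)_\infty$ and identify the two theta-type sums as infinite products. By Jacobi's triple product, $\sum_{j}(-1)^j q^{rj^2} = (q^r;q^{2r})_\infty^2 (q^{2r};q^{2r})_\infty$, and by Euler's pentagonal number theorem applied with base $q^r$, $\sum_j (-1)^j q^{rj(3j+1)/2} = (q^r;q^r)_\infty$. Substituting these, the identity to prove becomes
\[
(q^r;q^{2r})_\infty^2 (q^{2r};q^{2r})_\infty \cdot (q^{2r};q^{2r})_\infty = (q^r;q^r)_\infty \cdot (q^r;q^r)_\infty,
\]
i.e.\ $(q^r;q^{2r})_\infty^2 (q^{2r};q^{2r})_\infty^2 = (q^r;q^r)_\infty^2$. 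This is immediate from the factorization $(q^r;q^r)_\infty = (q^r;q^{2r})_\infty (q^{2r};q^{2r})_\infty$ (splitting parts by parity), squared. So the analytic proof collapses to these standard identities.

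The only real obstacle is bookkeeping: making sure the pentagonal number theorem is invoked at the correct base and that the substitution $q\mapsto q^r$ is applied consistently, and double-checking the Jacobi triple product specialization $\sum_j (-1)^j z^{j}q^{j^2}\big|_{z=1}$ gives the product $(q;q^2)_\infty^2(q^2;q^2)_\infty$ before replacing $q$ by $q^r$. Once those specializations are pinned down the rest is formal cancellation. I would also remark that, alternatively, a combinatorial proof is possible: the left side counts, with signs, pairs consisting of a $2r$-regular partition and a square-indexed theta term, and one can set up a sign-reversing involution matching these against $r$-regular partitions weighted by pentagonal terms, in the spirit of the combinatorial arguments used earlier in the section for Theorem \ref{pent}; but the analytic route above is shortest and I would present it as the main proof.
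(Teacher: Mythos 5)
Your proposal is correct and follows essentially the same route as the paper: both sides are converted to generating functions, the theta sum $\sum_j(-1)^jq^{rj^2}$ and the pentagonal sum $\sum_j(-1)^jq^{rj(3j+1)/2}$ are replaced by their standard product forms, and the identity reduces to the elementary factorization $(q^r;q^r)_\infty=(q^r;q^{2r})_\infty(q^{2r};q^{2r})_\infty$. The only cosmetic difference is that you quote Jacobi's triple product to write the theta sum as $(q^r;q^{2r})_\infty^2(q^{2r};q^{2r})_\infty$, whereas the paper cites the equivalent quotient form $(q^r;q^r)_\infty/(-q^r;q^r)_\infty$ from Andrews' Identity (2.2.12).
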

       \begin{proof} From \cite[Identity (2.2.12)]{A98} and Euler's pentagonal number theorem \cite[Identity (5.13)]{AE}, both with   $q$ replaced by $q^r$, we obtain

\begin{align}\notag \sum_{n=0}^\infty \sum_{j=-\infty}^\infty (-1)^{j} b_{2r}(n-rj^2)q^n& =\left(\sum_{j=-\infty}^\infty(-1)^jq^{rj^2}\right)\left( \sum_{n=0}^\infty b_{2r}(n)q^n\right) \\ \notag & =
\frac{(q^r;q^r)_\infty}{(-q^r;q^r)_\infty}\cdot \frac{(q^{2r};q^{2r})_\infty}{(q;q)_\infty}\\ & \notag = (q^{r};q^{r})_\infty\frac{(q^{r};q^{r})_\infty}{(q;q)_\infty}\\ \notag & = \left(\sum_{j=-\infty}^\infty (-1)^j q^{rj(3j+1)/2}\right)\left(\sum_{n=0}^\infty b_r(n)q^n\right) \\ \notag  & =\sum_{n=0}^\infty\sum_{j=-\infty}^\infty (-1)^j b_r(n-rj(3j+1)/2)q^n.\end{align} 
 \end{proof}
 Rewriting the statement of Theorem \ref{Th_pd_b}, we obtain a recurrence relation involving the sequences $b_r(n)$ and $\pd_{0,r}(n).$ 
 \begin{corollary}\label{Th_r=2}
     For $n\geq 0$, we have \begin{equation}\label{b2} \sum_{j=-\infty}^\infty (-1)^{j} pd_{0,r}(n-rj^2)= \sum_{j=-\infty}^\infty (-1)^j b_r(n-rj(3j+1)/2).\end{equation}
 \end{corollary}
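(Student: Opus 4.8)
The plan is to derive Corollary \ref{Th_r=2} directly from Theorem \ref{Th_pd_b} by observing that the two statements differ only on the left-hand side, where one has $b_{2r}(n-rj^2)$ and the other has $pd_{0,r}(n-rj^2)$. Since Corollary \ref{Th_prd} already establishes $pd_{0,r}(m)=b_{2r}(m)$ for every $m\geq 0$, substituting $m = n-rj^2$ (with the convention that $b_{2r}(m)=pd_{0,r}(m)=0$ for $m<0$) transforms the left-hand side of Theorem \ref{Th_pd_b} verbatim into the left-hand side of \eqref{b2}, while the right-hand side is identical in both statements. So the entire content is a one-line substitution.

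Concretely, the steps are: first, invoke Theorem \ref{Th_pd_b} to get
$$\sum_{j=-\infty}^\infty (-1)^{j} b_{2r}(n-rj^2)= \sum_{j=-\infty}^\infty (-1)^j b_r(n-rj(3j+1)/2).$$
Second, apply Corollary \ref{Th_prd}, which gives $b_{2r}(n-rj^2)=pd_{0,r}(n-rj^2)$ for each integer $j$ (both sides being zero when $n-rj^2<0$), and replace each summand on the left accordingly. This yields exactly \eqref{b2}, completing the proof. One could equally well reprove it from scratch on the generating-function side: replace $(q^{2r};q^{2r})_\infty/(q;q)_\infty = \sum pd_{0,r}(n)q^n$ in the second line of the proof of Theorem \ref{Th_pd_b}, using Theorem \ref{1st_gen} (or Corollary \ref{Th_prd}) to identify that quotient as the generating function for $pd_{0,r}$; the remaining manipulations are unchanged.

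There is essentially no obstacle here — the result is labeled a corollary precisely because it is an immediate rewriting. The only thing to be careful about is the boundary convention for negative arguments, which is handled uniformly since both $b_{2r}$ and $pd_{0,r}$ are extended by zero on negative integers and agree there trivially. I would present the proof in two sentences: cite Theorem \ref{Th_pd_b}, then apply the identity $pd_{0,r}(m)=b_{2r}(m)$ of Corollary \ref{Th_prd} term by term on the left-hand side.
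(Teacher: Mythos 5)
Your proposal is correct and matches the paper's own derivation: the corollary is obtained by rewriting Theorem \ref{Th_pd_b} via the identity $pd_{0,r}(m)=b_{2r}(m)$ of Corollary \ref{Th_prd}, applied term by term on the left-hand side. Nothing further is needed.
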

 \begin{remark}
     Since $b_2(n)$ is the number of partitions of $n$ into odd parts,  using Euler's pentagonal number theorem \cite[Identity (1.3.1)]{A98} and Gauss' theta function  identity  \cite[Identity (2.2.13)]{A98},  the right-hand side of \eqref{b2} becomes $$\sum_{j=-\infty}^\infty (-1)^j b_2(n-2j(3j+1)/2)=(q^2;q^2)_\infty \frac{1}{(q;q^2)_\infty}=\sum_{n=0}^\infty q^{n(n+1)/2}. $$
     
Then, Corollary  \ref{Th_r=2} for $r=2$  reduces to  \cite[Theorem 1.2]{M17}.
 \end{remark}

We conclude this section with another recurrence relation for $\pd_{0,r}(n)$ which is valid for odd $n$. 

\begin{theorem} If $n>0$ is odd and $r\geq 2$, we have \begin{equation}\label{Th_pd_rec}\pd_{0,r}(n)= \sum_{j=1}^\infty  (-1)^{\lceil\frac{j}{2}\rceil+1} \pd_{0,r}(n-j(j+1)/2).\end{equation} \end{theorem}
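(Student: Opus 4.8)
The plan is to recast \eqref{Th_pd_rec} as the statement that a single generating function is even in $q$, and then to evaluate that generating function in closed form. Isolating the $j=0$ term (note that $\lceil 0/2\rceil=0$, so its sign is $+1$) and using $-(-1)^{\lceil j/2\rceil+1}=(-1)^{\lceil j/2\rceil}$, the identity \eqref{Th_pd_rec} for odd $n$ is equivalent to
\[
f(n):=\sum_{j=0}^{\infty}(-1)^{\lceil j/2\rceil}\,\pd_{0,r}\!\bigl(n-j(j+1)/2\bigr)=0\qquad\text{for all odd }n .
\]
Since $j(j+1)/2\to\infty$, each $f(n)$ is a finite sum, so $\sum_{n\ge0}f(n)q^n$ is a genuine power series, and it suffices to show it contains only even powers of $q$.

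By Corollary \ref{Th_prd} we have $\sum_{n\ge0}\pd_{0,r}(n)q^n=\dfrac{(q^{2r};q^{2r})_\infty}{(q;q)_\infty}$, hence
\[
\sum_{n\ge0}f(n)q^n=\Theta(q)\cdot\frac{(q^{2r};q^{2r})_\infty}{(q;q)_\infty},\qquad\text{where }\ \Theta(q):=\sum_{j=0}^{\infty}(-1)^{\lceil j/2\rceil}q^{j(j+1)/2}.
\]
The crux is a product evaluation of $\Theta(q)$. The map sending an even index $j=2m$ to $k=-m$ and an odd index $j=2m+1$ to $k=m+1$ is a bijection from $\mathbb{N}_0$ onto $\mathbb{Z}$, and one checks directly that under it $\lceil j/2\rceil\equiv k\pmod 2$ and $j(j+1)/2=2k^2-k$. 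Therefore $\Theta(q)=\sum_{k=-\infty}^{\infty}(-1)^k q^{2k^2-k}$, and the Jacobi triple product identity \cite{A98} (equivalently, Ramanujan's $f(a,b)$ with $a=-q$ and $b=-q^3$, so $ab=q^4$) gives
\[
\Theta(q)=(q;q^4)_\infty(q^3;q^4)_\infty(q^4;q^4)_\infty=(q;q^2)_\infty(q^4;q^4)_\infty .
\]

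Finally, combining this with the factorization $(q;q)_\infty=(q;q^2)_\infty(q^2;q^2)_\infty$ (separating the factors with odd and even exponents) yields
\[
\sum_{n\ge0}f(n)q^n=\frac{(q;q^2)_\infty(q^4;q^4)_\infty(q^{2r};q^{2r})_\infty}{(q;q^2)_\infty(q^2;q^2)_\infty}=\frac{(q^4;q^4)_\infty(q^{2r};q^{2r})_\infty}{(q^2;q^2)_\infty},
\]
which is manifestly a power series in $q^2$ since $2$, $4$, and $2r$ are even. Hence $f(n)=0$ for every odd $n$, which is \eqref{Th_pd_rec}. The one real obstacle is the product form of $\Theta(q)$: once one spots the reindexing over $\mathbb{Z}$ and verifies that it respects both the sign and the exponent of each term, the Jacobi triple product does the work and the rest is routine manipulation of Pochhammer symbols. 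A combinatorial proof via a sign-reversing involution on the partitions counted with sign on the right-hand side of \eqref{Th_pd_rec} also seems within reach, in the spirit of the involution arguments used earlier in this section.
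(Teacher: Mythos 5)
Your proof is correct and follows essentially the same route as the paper: both rewrite the recurrence as the vanishing of the odd-power coefficients of $\Theta(q)\cdot\sum_{n\ge0}\pd_{0,r}(n)q^n$, evaluate $\Theta(q)=(q;q^2)_\infty(q^4;q^4)_\infty$ via the Jacobi triple product, and observe that the resulting product $\frac{(q^{2r};q^{2r})_\infty}{(q^2;q^4)_\infty}$ is a series in $q^2$. The only cosmetic difference is that you spell out the reindexing of the theta series over $\mathbb{Z}$, which the paper leaves implicit in its substitution $z=-q$, $q\mapsto q^2$.
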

\begin{proof} Using the Jacobi Triple Product identity \cite[Theorem 2.8]{A98} with $z=-q$ and $q$ replaced by $q^2$,  we have 
\begin{align} \notag\sum_{n=0}^\infty \Big(\sum_{j=0}^\infty (-1)^{\lceil\frac{j}{2}\rceil} \pd_{0,r}(n &  -j(j+1)/2)\Big) q^n \\ \notag &  = \left(\sum_{j=0}^\infty (-1)^{\lceil\frac{j}{2}\rceil}q^{j(j+1)/2}\right)\left(\sum_{n=0}^\infty\pd_{0,r}(n)q^n\right)\\ \notag & =(q^4;q^4)_\infty(q; q^2)_\infty \frac{(q^{2r}; q^{2r})_\infty }{(q; q)_\infty} \\ \label{signedtr} & = \frac{(q^{2r}; q^{2r})_\infty }{(q^2; q^4)_\infty}.\end{align} If $n$ is odd,  the coefficient of $q^n$ in \eqref{signedtr} is zero, and thus if $n$ is odd, $$\sum_{j=0}^\infty (-1)^{\lceil\frac{j}{2}\rceil} \pd_{0,r}(n-j(j+1)/2)=0$$
and \eqref{Th_pd_rec} holds. 
\end{proof}

\section{A Second Generalization of ped partitions} \label{gen-ped-gl}

In this section, we consider another natural generalization of PED partitions: partitions whose parts divisible by $r$ occur with restricted multiplicity. We denote by $\mathcal{PRM}_{0,r}(n)$ the set of partition of $n$ whose parts congruent to $0$ modulo $r$ have multiplicity less than $r,$ and set $\textrm{prm}_{0,r}(n):=|\mathcal{PRM}_{0,r}(n)|$. If $r = 2$, $\mathcal{PRM}_{0,r}(n)$ is the set of PED partitions of $n$. 

\begin{theorem} \label{second pd}
    Let $n \geq 0$, $r\geq 2$. Then,  $\textrm{prm}_{0,r}(n)=b_{r^2}(n)$. 
   \end{theorem}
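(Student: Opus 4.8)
The plan is to prove $\textrm{prm}_{0,r}(n)=b_{r^2}(n)$ both analytically, via generating functions, and combinatorially, via an iterated Glaisher-type bijection.

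\textit{Analytic approach.} First I would write down the generating function for $\textrm{prm}_{0,r}(n)$. Parts not divisible by $r$ are unrestricted, contributing $\prod_{r\nmid k}(1-q^k)^{-1}$, while parts divisible by $r$ occur with multiplicity at most $r-1$, contributing $\prod_{j\ge 1}(1+q^{rj}+q^{2rj}+\cdots+q^{(r-1)rj}) = \prod_{j\ge1}\frac{1-q^{r^2 j}}{1-q^{rj}}$. Hence
\begin{align*}
\sum_{n\ge0}\textrm{prm}_{0,r}(n)q^n &= \frac{1}{(q;q)_\infty}\cdot(q^r;q^r)_\infty\cdot\frac{(q^{r^2};q^{r^2})_\infty}{(q^r;q^r)_\infty}\\
&= \frac{(q^{r^2};q^{r^2})_\infty}{(q;q)_\infty} = \sum_{n\ge0}b_{r^2}(n)q^n,
\end{align*}
where the last equality is the standard product for $r^2$-regular partitions (no part divisible by $r^2$). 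This is essentially immediate once the factorization of $1/(q;q)_\infty$ into parts divisible by $r$ and parts not divisible by $r$ is used, so I expect no real obstacle here.

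\textit{Combinatorial approach.} For a bijective proof I would adapt Glaisher's bijection $\varphi_G$ (with modulus $r$) in an iterated fashion. Starting from $\lambda\in\mathcal{PRM}_{0,r}(n)$, repeatedly apply the "merge $r$ equal parts into one part $r$ times larger" step: whenever a part occurs at least $r$ times, it must (by the $\textrm{prm}$ condition) be a part divisible by $r$ on the first pass, but subsequent merges can create parts divisible by higher powers of $r$ that themselves repeat. One continues until no part repeats $r$ or more times, and tracks that a part divisible by $r^2$ can never survive: the only way to reach multiplicity $\ge r$ of a part $m$ with $r^2\mid m$ would be by merging, but merging always reduces the number of parts and the process terminates with all multiplicities $<r$; more carefully, I would argue the image lies in $\mathcal B_{r^2}(n)$ by showing any part of the form $r^2 m'$ appearing in the output would have had to be produced by merging $r$ copies of $r m'$, contradicting termination. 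Conversely, from $\eta\in\mathcal B_{r^2}(n)$ one applies $\varphi_G^{-1}$: repeatedly split any part divisible by $r$ into $r$ copies of the part $r$ times smaller, until parts divisible by $r$ all have multiplicity $<r$; since $\eta$ has no part divisible by $r^2$, this terminates and lands in $\mathcal{PRM}_{0,r}(n)$. The two processes are mutually inverse because each is a composition of the elementary merge/split steps of $\varphi_G$ and $\varphi_G^{-1}$.

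\textit{Main obstacle.} The analytic proof is routine; the delicate point in the combinatorial argument is verifying that the iterated merging process is well-defined, terminates, and produces exactly an $r^2$-regular partition (and that the inverse lands exactly in $\mathcal{PRM}_{0,r}$), rather than merely an $r$-regular-at-each-stage partition. I would handle this by phrasing the map as: write $\lambda = (\lambda^{r\mid},\lambda^{r\nmid})$, apply $\varphi_G$ to $\tfrac1r\lambda^{r\mid}$ after noting $\lambda^{r\mid}/r$ has all parts $<r$ in multiplicity so it equals... — actually cleaner: observe $\mathcal{PRM}_{0,r}$ consists of $\lambda$ whose divisible-by-$r$ part, divided by $r$, is a partition with parts repeated fewer than $r$ times, i.e. lies in $\mathcal D_r$; pull it back via $\varphi_G^{-1}$ to an $r$-regular partition, multiply by $r$, and recombine. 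Then the image has its divisible-by-$r$ parts of the form $r\cdot(\text{$r$-regular})$, hence no part divisible by $r^2$, and the unrestricted non-multiples of $r$ cause no further divisibility-by-$r^2$ issues — so the result is $r^2$-regular, and the construction is visibly invertible. Framing it this way via the already-cited bijection $\varphi_G:\mathcal B_r\to\mathcal D_r$ sidesteps the termination subtlety entirely.
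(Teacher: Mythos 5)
Your proposal is correct and matches the paper's proof essentially verbatim: the same generating function factorization $\frac{(q^r;q^r)_\infty}{(q;q)_\infty}\cdot\frac{(q^{r^2};q^{r^2})_\infty}{(q^r;q^r)_\infty}=\frac{(q^{r^2};q^{r^2})_\infty}{(q;q)_\infty}$, and the same combinatorial bijection via the split $\lambda=(\lambda^{r\mid},\lambda^{r\nmid})$ with $\varphi_G^{\pm1}$ applied to $\tfrac1r\lambda^{r\mid}$ and rescaled by $r$. Your final ``cleaner'' formulation, which avoids the iterated-merging termination issue, is exactly the argument the paper gives.
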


\begin{proof}[Analytic Proof.] The generating function for the number of partitions of $n$ with parts congruent to $0$ modulo $r$ repeated at most $r-1$ times is $$\frac{(q^r; q^r)_\infty}{(q; q)_\infty} \prod_{j=1}^\infty (1 + q^{rj} + q^{2rj} + \dotsm +q^{(r-1)rj})=\frac{(q^r; q^r)_\infty}{(q; q)_\infty} \frac{(q^{r^2}; q^{r^2})_\infty}{(q^r; q^r)_\infty} = \frac{(q^{r^2}; q^{r^2})_\infty}{(q; q)_\infty},$$
which is the generating function for the number of $r^2$-regular partitions of  $n$. \end{proof}
\begin{proof}[Combinatorial proof.] Let $\lambda\in \mathcal{PRM}_{0,r}(n)$ and write $\lambda = ( \lambda^{r \mid}, \lambda^{r \nmid})$. Then the parts of $\lambda^{r \nmid}$ have unrestricted multiplicity but the parts of $\lambda^{r \mid}$ all have multiplicity less than $r$. 
Since $\lambda^{r\mid}=r\eta$ for some partition $\eta$ with parts occurring less than $r$ times, we may apply the inverse of Glaisher's bijection to $\eta$. Then, $r\varphi_G^{-1}(\eta)$ is a partition with  parts congruent to $0$ modulo $r$ but not congruent to $0$ modulo $r^2$. Hence,  $\mu = \lambda^{r \nmid} \cup r\varphi_G^{-1}(\eta)\in \mathcal B_{r^2}(n)$.

For the inverse, start with a partition $\mu\in \mathcal B_{r^2}(n)$ and write $\mu = (\mu^{r \mid}, \mu^{r\nmid})$. Since $\mu^{r\mid}=r\nu$ for some partition $\nu\in \mathcal B_r$, we may apply Glaisher's bijection to $\nu$.  Then, $r\varphi_G(\nu)$  is a partition with all parts congruent to $0$ modulo $r$ and multiplicity less than $r$. Hence, $\lambda= \mu^{r \nmid} \cup r\varphi_G(\nu) \in \mathcal{PRM}_{0,r}(n)$.
\end{proof}

Corollary \ref{Th_prd} and Theorem \ref{second pd} lead to the following result.

\begin{corollary} If $n\geq 0$ and $r\geq 2$, then $\pd_{0,2r^2}(n)=\textrm{prm}_{0,2r}(n)$. 
    \end{corollary}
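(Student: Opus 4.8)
The statement to prove is $\pd_{0,2r^2}(n) = \prm_{0,2r}(n)$ for $n \geq 0$, $r \geq 2$. The plan is to chain together the two results just established. By Corollary~\ref{Th_prd} applied with modulus $2r^2$ in place of $r$, we have $\pd_{0,2r^2}(n) = b_{2(2r^2)}(n) = b_{4r^2}(n)$. On the other hand, by Theorem~\ref{second pd} applied with modulus $2r$ in place of $r$, we have $\prm_{0,2r}(n) = b_{(2r)^2}(n) = b_{4r^2}(n)$. Since both quantities equal $b_{4r^2}(n)$, they are equal to each other. I should double-check that the hypotheses of both cited results are met: Corollary~\ref{Th_prd} requires the modulus to be at least $2$, and $2r^2 \geq 8$; Theorem~\ref{second pd} requires its modulus to be at least $2$, and $2r \geq 4$. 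Both are satisfied.

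For the write-up I would present this as a short chain of equalities, something like:
\begin{equation*}
\pd_{0,2r^2}(n) = b_{4r^2}(n) = \prm_{0,2r}(n),
\end{equation*}
with the first equality citing Corollary~\ref{Th_prd} (with $r$ replaced by $2r^2$) and the second citing Theorem~\ref{second pd} (with $r$ replaced by $2r$). I would note explicitly that $2 \cdot (2r^2) = 4r^2 = (2r)^2$, which is the algebraic coincidence making the two moduli match; this is really the only ``content'' in the argument. Since both a purely analytic and a purely combinatorial proof are available for each of the two ingredients, one could alternatively compose the relevant bijections (Glaisher's bijection in both directions) to get a direct combinatorial map from $\mathcal{PD}_{0,2r^2}(n)$ to $\mathcal{PRM}_{0,2r}(n)$ factoring through $\mathcal{B}_{4r^2}(n)$, but this adds nothing beyond the composition of known maps.

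The main (and essentially only) obstacle is bookkeeping: making sure the substitutions into the two cited results are done consistently, so that $b_{2r}$ in the conclusion of Corollary~\ref{Th_prd} becomes $b_{4r^2}$ and $b_{r^2}$ in the conclusion of Theorem~\ref{second pd} also becomes $b_{4r^2}$. There is no genuine difficulty here; the corollary is an immediate consequence of the two preceding structural theorems.
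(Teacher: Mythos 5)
Your proof is correct and follows exactly the paper's intended route: the corollary is stated as an immediate consequence of Corollary~\ref{Th_prd} (with $r$ replaced by $2r^2$) and Theorem~\ref{second pd} (with $r$ replaced by $2r$), both yielding $b_{4r^2}(n)$. Your hypothesis checks and the observation $2\cdot(2r^2)=(2r)^2$ are exactly the bookkeeping the paper leaves implicit.
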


    A complete characterization of the divisibility of $b_9(n)$ by $3$ has been  given recently in \cite{Abinash}. It follows that $\textrm{prm}_{0,3}(n)$ is divisible by $3$ if and only if at least one of the following conditions holds:
    \begin{itemize}
\item[1.] There is a prime $p \equiv 2\pmod 3$  such that $\textrm{ord}_p(3n+1)$  is odd.
 \item[2.]  There is a prime $p \equiv 1\pmod 3$ such that $\textrm{ord}_p(3n+1)\equiv 2 \pmod 3$.
\end{itemize} Here, $\textrm{ord}_p(n)$ denotes the highest power of $p$ that divides $n$.

\begin{remark} The generalization introduced in this section could be extended further. For $n\geq 0$, $r\geq 2$ and $0\leq t<r$, let $\mathcal{PRM}_{t,r}(n)$ be the set of partitions of $n$ whose parts congruent to $t$ modulo $r$ have multiplicity less than $r,$ and set $\textrm{prm}_{t,r}(n):=|\mathcal{PRM}_{t,r}(n)|$. One can easily adapt  both proofs of Theorem \ref{second pd} to show that $\textrm{prm}_{t,r}(n)$ equals the number of partitions of $n$ with no parts congruent to $tr$ modulo $r^2$.  When $r=2$ and $t=1$,  $\mathcal{PRM}_{t,r}(n)$ is the set of POD partitions of $n$.

\end{remark}

\section{Further generalizations of pod partitions}\label{s_4}

 In this section, we introduce two additional generalizations of POD partitions, beginning with partitions with parts not divisible by $r$ distinct. For $n\geq 0$ and $r\geq 2$, we denote by $\mathcal{PD}_{\overline 0,r}(n)$ the set of partitions of $n$ with parts not congruent to $0$ modulo $r$ distinct and parts divisible by $r$ having unrestricted multiplicity. We set $\pd_{\overline 0, r}(n):=|\mathcal{PD}_{\overline 0,r}(n)|$. When $r=2$, $\mathcal{PD}_{\overline 0,r}(n)$ is  the set of POD partitions of $n$.
We prove several identities involving $\pd_{\overline 0, r}(n),$ some of which extend existing results for $\textrm{pod}(n)$. 

Let $\mathcal{PE}_{0,2r}(n)$ be the set of partitions of $n$ in which all even parts are divisible by $2r$ and odd parts are unrestricted, and set $\pe_{0,2r}(n):=|\mathcal{PE}_{0,2r}(n)|$.

\begin{theorem}\label{pe}
    If $n \geq 0$ and $r\geq 2$, we have $\pd_{\overline 0,r}(n) = \pe_{0,2r}(n).$
\end{theorem}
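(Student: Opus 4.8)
The plan is to prove $\pd_{\overline 0,r}(n) = \pe_{0,2r}(n)$ in two ways, mirroring the structure of the other results in this section.

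\textbf{Analytic proof.} First I would write down the generating function for $\pd_{\overline 0,r}(n)$. A partition counted by $\mathcal{PD}_{\overline 0,r}(n)$ splits as a distinct partition into parts not divisible by $r$ together with an arbitrary partition into parts divisible by $r$, so
\[
\sum_{n=0}^\infty \pd_{\overline 0,r}(n)q^n = \frac{(-q;q)_\infty}{(-q^r;q^r)_\infty}\cdot \frac{1}{(q^r;q^r)_\infty}.
\]
Then I would simplify the distinct-part factor using Euler's identity \eqref{Euler}, $(-q;q)_\infty = 1/(q;q^2)_\infty$, giving $\dfrac{1}{(q;q^2)_\infty (q^r;q^r)_\infty (-q^r;q^r)_\infty} = \dfrac{1}{(q;q^2)_\infty (q^{2r};q^{2r})_\infty}$. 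On the other side, a partition in $\mathcal{PE}_{0,2r}(n)$ is an arbitrary partition into odd parts together with an arbitrary partition into multiples of $2r$, so its generating function is exactly $\dfrac{1}{(q;q^2)_\infty (q^{2r};q^{2r})_\infty}$. Comparing coefficients finishes the argument. This is essentially routine once the two generating functions are identified; the only mild subtlety is recognizing that $(q^r;q^r)_\infty(-q^r;q^r)_\infty = (q^{2r};q^{2r})_\infty$.

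\textbf{Combinatorial proof.} I would build a bijection $\lambda \mapsto \mu$ from $\mathcal{PD}_{\overline 0,r}(n)$ to $\mathcal{PE}_{0,2r}(n)$. Given $\lambda = (\lambda^{r\mid},\lambda^{r\nmid})$, the parts of $\lambda^{r\nmid}$ are distinct and not divisible by $r$. The idea is: leave $\lambda^{r\mid}$ as a source of parts divisible by $r$ and convert it plus the even parts of $\lambda^{r\nmid}$ into the required shape, while the distinct not-divisible-by-$r$ part handles the odd-part component via (the inverse of) Euler's bijection. Concretely, split $\lambda^{r\nmid}$ into its even parts (call them $\alpha$, distinct and $\equiv$ nonzero mod $r$) and its odd parts (call them $\beta$, distinct). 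Apply the inverse of Glaisher's bijection $\varphi_G^{-1}$ with $r=2$ to $\beta$ to get a partition $\varphi_G^{-1}(\beta)$ into odd parts (unrestricted multiplicity). For $\alpha$, each part is even and not divisible by $r$; I would repeatedly halve such parts — or merge/split appropriately — to push all even mass into multiples of $2r$: more carefully, write each even non-multiple-of-$r$ part $2^i m$ with $m$ odd, but this needs care since $r$ may itself be even. The cleanest route: take $\mu^{r\mid} := \lambda^{r\mid}$ union whatever comes from $\alpha$ after combining, and handle $\alpha$ by noting $2\alpha$ has distinct parts $\equiv$ nonzero mod $r$...

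The hard part will be getting the bijection on the even, not-divisible-by-$r$ parts exactly right when $r$ is even, since then "even" and "divisible by $r$" interact. I expect the slickest fix is to route everything through the generating-function identity: decompose $\lambda$ as a distinct partition $\delta$ into parts not divisible by $r$ plus a partition $\rho$ into parts divisible by $r$; apply the inverse Euler/Glaisher bijection ($r=2$ case) to $\delta$ to obtain a partition $\delta'$ into odd parts with unrestricted multiplicity; apply the inverse Glaisher bijection to $\tfrac1r\rho$ to obtain $\rho'\in\mathcal B_2$ (distinct parts), i.e. $\rho = r\,\varphi_G(\text{odd partition})$, so that $\rho$ already has the form needed after a further merge into multiples of $2r$. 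I would then verify that $\delta' \cup (\text{the multiples-of-}2r\text{ partition from }\rho)$ lands in $\mathcal{PE}_{0,2r}(n)$ and that each step is reversible. This is the step that demands the most bookkeeping; everything else is routine.
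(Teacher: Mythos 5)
Your analytic proof is correct and is essentially the paper's analytic proof: the same factorization $\frac{(-q;q)_\infty}{(-q^r;q^r)_\infty}\cdot\frac{1}{(q^r;q^r)_\infty}$, simplified via Euler's identity and $(q^r;q^r)_\infty(-q^r;q^r)_\infty=(q^{2r};q^{2r})_\infty$ to $\frac{1}{(q;q^2)_\infty(q^{2r};q^{2r})_\infty}$, which is visibly the generating function for $\mathcal{PE}_{0,2r}(n)$. This alone establishes the theorem.

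The combinatorial half, however, has a genuine gap, and your proposed ``slickest fix'' does not repair it. The step ``apply the inverse Glaisher bijection to $\frac{1}{r}\rho$ to obtain $\rho'\in\mathcal B_2$ (distinct parts)'' is not well defined: $\frac{1}{r}\rho$ is an arbitrary partition, hence in the domain of neither $\varphi_G$ (odd partitions) nor $\varphi_G^{-1}$ (distinct partitions); also $\mathcal B_2$ consists of odd partitions, not distinct ones, and the assertion $\rho=r\,\varphi_G(\text{odd partition})$ fails in general (e.g.\ $\rho=(r,r)$ is not $r$ times a distinct partition). More fundamentally, any plan that sends all of $\rho=\lambda^{r\mid}$ to the multiples-of-$2r$ side is impossible: sizes are preserved, so you would need $2r\mid|\lambda^{r\mid}|$, which already fails for $\lambda^{r\mid}=(r)$. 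Your first attempt stalls at exactly the same spot (the distinct even parts of $\lambda^{r\nmid}$ when $r$ is even). The paper's device avoids both problems: write $\lambda^{r\mid}=\alpha\cup\beta$, where $\alpha$ collects copies so that every part of $\alpha$ has even multiplicity and $\beta$ is the distinct leftover; merge equal pairs of $\alpha$ (a single merge, no iteration) to get parts divisible by $2r$; and apply the Euler splitting map to the distinct partition $\beta\cup\lambda^{r\nmid}$ as a whole to produce the odd parts. Routing the leftover $\beta$ through the Euler map together with $\lambda^{r\nmid}$, rather than keeping all of $\lambda^{r\mid}$ on the even side, is what makes the map well defined for both parities of $r$ and reversible: the inverse merges all odd parts of the image into a distinct partition, splits the even parts (all divisible by $2r$) into halves, and then separates the distinct partition by divisibility by $r$.
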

\begin{proof}[Analytic Proof.] 
\begin{align*}\sum_{n=0}^\infty \pd_{\overline 0,r}(n)q^n & = \frac{1}{(q^r;q^r)_\infty}\cdot \frac{(-q;q)_\infty}{(-q^r;q^r)_\infty}\\
 & = \frac{1}{(q^{2r};q^{2r})_\infty(q;q^{2})_\infty}\\ & =\sum_{n=0}^\infty \pe_{0,2r}(n)q^n.
\end{align*}
\end{proof}

\begin{proof}[Combinatorial Proof.]
Let $\lambda\in \mathcal{PD}_{\overline 0,r}(n)$.  As before, write $\lambda=(\lambda^{r \mid }, \lambda^{r\nmid })$. We further write $\lambda^{r \mid }=\alpha\cup \beta$ where   $\alpha$ is a partition with all parts occurring with even multiplicity and $\beta$ is a distinct partition. Then $\beta\cup \lambda^{r\nmid }$ is a distinct partition and therefore  $\varphi_G(\beta\cup \lambda^{r\nmid })$ is an odd partition. Let $\psi(\alpha)$ be the partition obtained from $\alpha$ by merging pairs of equal parts into a single part. Then $\psi(\alpha)\cup \varphi_G(\beta\cup \lambda^{r\nmid })\in \mathcal{PE}_{0,2r}(n)$.  The transformation is clearly invertible.
\end{proof}

\begin{example}
    Let $\lambda = (12^3, 9^4, 7, 3^5, 2, 1) \in \mathcal{PD}_{\overline{0}, 3}(97)$. Then $\lambda^{3 \mid} = (12^3, 9^4, 3^5)$ and $\lambda^{3 \nmid} = (7, 2, 1)$. Set $\alpha = (12^2, 9^4, 3^4)$ and $\beta = (12, 3)$. Then $\alpha \cup \beta = \lambda^{3 \mid}$, $\beta \cup \lambda^{3 \nmid} = (12, 7, 3, 2, 1)$ is a distinct partition, and every part in $\alpha$ has even multiplicity.
 Using Glaisher's transformation, we map $\beta \cup \lambda^{3 \nmid}$ to $\varphi_G(\beta \cup \lambda^{3 \nmid}) = (7, 3^5, 1^3)$, an odd partition. Merging in pairs, we map $\alpha$ to $\psi(\alpha) = (24, 18^2, 6^2),$ which is a partition with even parts, all congruent to $0$ mod $6$. 
  Thus, $\psi(\alpha) \cup \varphi(\beta \cup \lambda^{3 \nmid}) = (24, 18^2, 7, 6^2, 3^5, 1^3) \in \mathcal{PE}_{0, 6}(97)$.
\end{example}

\begin{remark}
    If $r=3$, $\pd_{\overline 0, r}(n)$ appears in \cite[A096981]{OEIS} with both interpretations: the number of partitions in $\mathcal{PD}_{\overline 0,r}(n)$ and also the number of partitions in $\mathcal{PE}_{0,2r}(n).$
For larger $r$, the sequence does not appear in \cite{OEIS}.
\end{remark}

\begin{remark} An overpartition of $n$ is a partition in which the first occurrence of a part may be overlined. Since \begin{align*}\sum_{n=0}^\infty \pd_{\overline 0,r}(n)q^n & = \frac{1}{(q^r;q^r)_\infty}\cdot \frac{(-q;q)_\infty}{(-q^r;q^r)_\infty}\\ & = \frac{(-q;q)_\infty}{(q^{2r};q^{2r})_\infty},\end{align*} it follows that $\pd_{\overline 0,r}(n)$ is also equal to the number of overpartitions of $n$ in which only parts divisible by $2r$ may be non-overlined. \end{remark}

Let $\mathcal{Q}_{\overline{0},2r}(n)$ be the set of distinct partitions of $n$ with no parts congruent to $0$ modulo ${2r}$. Thus, $\mathcal{Q}_{\overline{0},2r}(n)$ is the set of distinct  $2r$-regular partitions of $n$. Partitions that are simultaneously distinct and regular have been the subject of recent investigations (see, for example, \cite{A22,K}).
\begin{theorem}
For $n \geq 0$, we have $\pd_{\overline 0, r, e-o}(n) = Q_{\overline{0}, 2r,e-o}(n).$
\end{theorem}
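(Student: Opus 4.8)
The plan is to give both an analytic and a combinatorial proof, parallel to the treatment of Theorem~\ref{prd_eo}.

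For the analytic proof, I would compute the generating function of $\pd_{\overline 0,r,e-o}(n)$ by tracking the number of parts according to size. A part of size $j$ with $r\nmid j$ must occur with multiplicity $0$ or $1$, contributing the factor $1-q^j$; a part of size $j$ with $r\mid j$ has unrestricted multiplicity $m\ge 0$, contributing $\sum_{m\ge 0}(-q^j)^m=\frac{1}{1+q^j}$. Hence
\[
\sum_{n=0}^\infty \pd_{\overline 0,r,e-o}(n)q^n=\prod_{r\nmid j}(1-q^j)\cdot\prod_{k\ge 1}\frac{1}{1+q^{rk}}=\frac{(q;q)_\infty}{(q^r;q^r)_\infty}\cdot\frac{1}{(-q^r;q^r)_\infty}=\frac{(q;q)_\infty}{(q^{2r};q^{2r})_\infty},
\]
using $(q^r;q^r)_\infty(-q^r;q^r)_\infty=(q^{2r};q^{2r})_\infty$. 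On the other side, a distinct partition with no part divisible by $2r$ contributes $\prod_{2r\nmid j}(1-q^j)=\frac{(q;q)_\infty}{(q^{2r};q^{2r})_\infty}$ to $\sum_{n\ge 0} Q_{\overline{0},2r,e-o}(n)q^n$. Comparing coefficients of $q^n$ completes the proof.

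For the combinatorial proof, note first that $\mathcal{Q}_{\overline 0,2r}(n)\subseteq \mathcal{PD}_{\overline 0,r}(n)$, since a distinct partition in particular has its parts not divisible by $r$ distinct. Put $\mathcal S(n)=\mathcal{PD}_{\overline 0,r}(n)\setminus\mathcal{Q}_{\overline 0,2r}(n)$; a partition lies in $\mathcal S(n)$ exactly when it has a repeated part (necessarily divisible by $r$) or a part divisible by $2r$. I would define a sign-reversing involution $\psi$ on $\mathcal S(n)$ in the style of Theorem~\ref{prd_eo}: for $\pi\in\mathcal S(n)$ let $a=a(\pi)$ be the largest repeated part (set $a=0$ if none) and $b=b(\pi)$ the largest part divisible by $2r$ (set $b=0$ if none), so at least one of $a,b$ is positive. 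If $2a>b$, set $\psi(\pi)=\pi\setminus(a,a)\cup(2a)$; since $a$ is divisible by $r$, $2a$ is divisible by $2r$, so $\psi(\pi)\in\mathcal S(n)$. If $2a\le b$, set $\psi(\pi)=\pi\setminus(b)\cup(b/2,b/2)$; since $b$ is divisible by $2r$, $b/2$ is divisible by $r$ and now repeated, so $\psi(\pi)\in\mathcal S(n)$. The map $\psi$ changes $\ell(\pi)$ by exactly $1$, hence is sign-reversing, and the weighted sum over $\mathcal S(n)$ cancels, leaving $\pd_{\overline 0,r,e-o}(n)=\sum_{\lambda\in\mathcal{Q}_{\overline 0,2r}(n)}(-1)^{\ell(\lambda)}=Q_{\overline{0},2r,e-o}(n)$.

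The step needing care is checking that $\psi$ is a genuine involution, i.e. that applying the opposite operation to $\psi(\pi)$ returns $\pi$. The key observation is that in the merge case $2a>b$ the part $2a$ cannot already belong to $\pi$ (if it did, then $2a$, being divisible by $2r$, would force $b\ge 2a$); hence $2a$ has multiplicity one in $\psi(\pi)$, and a short multiplicity count shows $a(\psi(\pi))\le a$ while $b(\psi(\pi))=2a$, so $\psi(\pi)$ falls in the split case at $b(\psi(\pi))=2a$ and splitting recovers $\pi$. Dually, in the split case $2a\le b$ one checks $b$ occurs exactly once in $\pi$ and that $b/2$ becomes the largest repeated part of $\psi(\pi)$ with $b(\psi(\pi))<b$, so $\psi(\pi)$ falls in the merge case and merging recovers $\pi$. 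This bookkeeping on multiplicities is the only delicate part; the rest is routine. (For $r=2$ this recovers the known $e$--$o$ identity for POD partitions.)
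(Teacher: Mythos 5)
Your proposal is correct and follows essentially the same route as the paper: the identical generating-function computation $\frac{(q;q)_\infty}{(q^r;q^r)_\infty(-q^r;q^r)_\infty}=\frac{(q;q)_\infty}{(q^{2r};q^{2r})_\infty}$, and the same Gupta-style sign-reversing involution on partitions with a repeated part or a part divisible by $2r$, using the largest repeated part $a$ and the largest part $b$ divisible by $2r$. The only difference is that you spell out the multiplicity bookkeeping verifying $\psi$ is an involution, which the paper leaves implicit by referring back to its Theorem on $pd_{t,r,e-o}$.
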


\begin{proof} 
\begin{align*}
\sum_{n = 0}^\infty \pd_{\overline 0, r, e-o}(n) q^n & = \frac{1}{(-q^r; q^r)_\infty}\frac{(q; q)_\infty}{(q^r; q^r)_\infty}
= \frac{(q; q)_\infty}{(q^{2r}; q^{2r})_\infty}
=  \sum_{n=0}^\infty Q_{\overline{0}, 2r, e-o}(n)q^n.
\end{align*}
\end{proof}

\begin{proof}[Combinatorial proof] The combinatorial proof is similar to that of Theorem \ref{prd_eo}. Let $\mathcal S(n)$ be the subset of partitions $\pi\in \mathcal{PD}_{\overline 0, r}(n)$ with at least one repeated part or at least one part congruent to $0$ modulo $2r$. Then $$\mathcal{PD}_{\overline 0, r}(n)\setminus \mathcal S(n)=\mathcal Q_{\overline 0, 2r}(n).$$ If $\pi \in \mathcal S(n)$, we denote by $a=a(\pi)$ the largest repeated part of $\pi$  and by $b=b(\pi)$ the largest  part of $\pi$ congruent to $0$ modulo $2r$. If $a$ or $b$ do not exist, we set  $a=0$ or $b=0$, respectively. We define a sign reversing involution $\psi$
 on $\mathcal S(n)$ as in  Theorem \ref{prd_eo}. This completes the combinatorial proof.  
\end{proof}

\begin{corollary}
    For $n\geq 0$, $pd_{\overline 0,r}(n)$ and $Q_{\overline{0}, 2r}(n)$ have the same parity. 
\end{corollary}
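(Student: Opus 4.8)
The statement to prove is the Corollary asserting that $pd_{\overline 0,r}(n)$ and $Q_{\overline 0,2r}(n)$ have the same parity, which is an immediate consequence of the preceding Theorem that $\pd_{\overline 0,r,e-o}(n)=Q_{\overline 0,2r,e-o}(n)$.

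\begin{proof}
By the preceding theorem, $\pd_{\overline 0, r, e-o}(n) = Q_{\overline{0}, 2r,e-o}(n)$. Now observe that for any set $\mathcal{A}(n)$ of partitions of $n$, writing $a_{e}(n)$ and $a_o(n)$ for the number of partitions in $\mathcal A(n)$ with an even, respectively odd, number of parts, we have on the one hand $|\mathcal A(n)| = a_e(n)+a_o(n)$ and on the other hand the signed count $a_{e-o}(n) = a_e(n)-a_o(n)$. Hence $|\mathcal A(n)| \equiv a_{e-o}(n) \pmod 2$, since $|\mathcal A(n)| - a_{e-o}(n) = 2a_o(n)$ is even. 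Applying this to $\mathcal A(n) = \mathcal{PD}_{\overline 0, r}(n)$ gives $pd_{\overline 0,r}(n)\equiv \pd_{\overline 0, r, e-o}(n)\pmod 2$, and applying it to $\mathcal A(n)=\mathcal Q_{\overline 0, 2r}(n)$ gives $Q_{\overline 0, 2r}(n)\equiv Q_{\overline{0}, 2r,e-o}(n)\pmod 2$. Combining these two congruences with the equality $\pd_{\overline 0, r, e-o}(n) = Q_{\overline{0}, 2r,e-o}(n)$ yields $pd_{\overline 0,r}(n) \equiv Q_{\overline 0, 2r}(n) \pmod 2$, as claimed.
\end{proof}

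The plan is simply the elementary observation that a signed (even-minus-odd) count and an unsigned (total) count of the same finite set are congruent modulo $2$, because their difference is twice the number of odd-length objects. There is essentially no obstacle here: the substantive content is already contained in the preceding theorem, and this corollary only records its parity shadow. I would present it in two or three sentences exactly as above, the only mild care being to state the general lemma $|\mathcal A(n)|\equiv a_{e-o}(n)\pmod 2$ once and then instantiate it twice rather than repeating the argument.
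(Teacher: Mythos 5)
Your proof is correct and matches the paper's (implicit) reasoning: the corollary is stated there without proof as an immediate consequence of the preceding theorem, precisely via the observation that a signed even-minus-odd count and the total count of the same set differ by twice the number of odd-length partitions, hence agree modulo $2$.
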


As shown in  \cite[Theorem 4.1]{M17},   $\ped(n)$ can be expressed in terms of  $\textrm{pod}(n)$ by $$\ped(n)=\sum_{k=0}^\infty \textrm{pod}(n-2T_k),$$ where $T_k=k(k+1)/2$ is the $k^{th}$ triangular number.   We give an analogous result involving $\pd_{\overline 0, r}(n)$. First we introduce some notation. 
Denote by $\mathcal{PED}_{0,r}(n)$ the set of partitions of $n$ with even parts distinct and divisible by $r$ and odd parts unrestricted, and set $\textrm{ped}_{0,r}(n):=|\mathcal{PED}_{0,r}(n)|$.

\begin{theorem}\label{ped0r}
Let $n \geq 0$ and $r \geq 2$ be even. Then $$\ped_{0,r}(n)=\displaystyle \sum_{j=0}^{\infty} pd_{\overline{0}, r}(n - rT_j).$$ 
\end{theorem}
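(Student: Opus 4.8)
The natural route is a generating function computation, in the same spirit as the analytic proofs earlier in the paper. The plan is to write down the generating function for $\ped_{0,r}(n)$, the generating function for $\pd_{\overline 0, r}(n)$ (which, by Theorem \ref{pe}, equals that of $\pe_{0,2r}(n)$ and was computed there), multiply the latter by $\sum_{j=0}^\infty q^{rT_j}$, and check that the two sides agree. Since $r$ is even, write $r = 2s$; then $\mathcal{PED}_{0,r}(n)$ consists of partitions whose even parts are distinct multiples of $r$ and whose odd parts are unrestricted, so
\begin{align*}
\sum_{n=0}^\infty \ped_{0,r}(n) q^n = \frac{(-q^{r}; q^{r})_\infty}{(q; q^2)_\infty},
\end{align*}
using that distinct multiples of $r$ contribute $(-q^r;q^r)_\infty$ (here $r$ even guarantees these parts are all even) and odd parts contribute $1/(q;q^2)_\infty$.

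On the other side, from the analytic proof of Theorem \ref{pe} we have $\sum_{n=0}^\infty \pd_{\overline 0, r}(n) q^n = \dfrac{1}{(q^{2r};q^{2r})_\infty (q;q^2)_\infty}$. The sum $\sum_{j=0}^\infty pd_{\overline 0,r}(n - rT_j)$ is the coefficient of $q^n$ in
\begin{align*}
\left(\sum_{j=0}^\infty q^{rT_j}\right)\cdot \frac{1}{(q^{2r};q^{2r})_\infty (q;q^2)_\infty}.
\end{align*}
So the identity reduces to the $q$-series statement
\begin{align*}
\left(\sum_{j=0}^\infty q^{rj(j+1)/2}\right)\cdot \frac{1}{(q^{2r};q^{2r})_\infty} = (-q^{r};q^{r})_\infty,
\end{align*}
i.e., after substituting $q \mapsto q^r$, it suffices to prove $\displaystyle\sum_{j=0}^\infty q^{j(j+1)/2} = (-q;q)_\infty\,(q^{2};q^{2})_\infty$. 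This is a classical identity: the right side is $(-q;q)_\infty / (-q;q^2)_\infty^{?}$—more cleanly, Gauss's identity \cite[Identity (2.2.13)]{A98} states exactly $\sum_{n\geq 0} q^{n(n+1)/2} = \dfrac{(q^2;q^2)_\infty}{(q;q^2)_\infty}$, and $\dfrac{(q^2;q^2)_\infty}{(q;q^2)_\infty} = (q^2;q^2)_\infty(-q;q)_\infty$ by Euler's identity \eqref{Euler}. Plugging $q \mapsto q^r$ back in gives the claim, so the analytic proof is complete once these substitutions are assembled.

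\textbf{Combinatorial alternative.} It is also worth giving a bijective proof paralleling the one in \cite{M17}. Given a partition counted by the right side, choose $j \geq 0$ and a partition $\rho \in \mathcal{PD}_{\overline 0,r}(n - rT_j)$; the staircase $rT_j = r(j + (j-1) + \dots + 1)$ supplies $j$ distinct multiples of $r$, namely $r, 2r, \dots, jr$. The idea is to merge these $j$ forced distinct multiples of $r$ with the (unrestricted) multiples of $r$ already present in $\rho$: apply Glaisher-type folding to the multiples of $r$ in $\rho$ to make room, absorb the staircase, and arrange so that in the image all even parts are distinct multiples of $r$ while odd parts stay unrestricted. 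Running this in reverse, a partition in $\mathcal{PED}_{0,r}(n)$ has some set of distinct even parts (all multiples of $r$); one peels off a maximal staircase $r, 2r, \dots, jr$ and records $j$, returning the rest to unrestricted multiplicity via the inverse folding. The main obstacle is bookkeeping: making the peeling of the staircase well-defined and genuinely invertible — precisely the subtlety that the triangular-number generating function $\sum q^{rT_j}$ encodes — rather than the $q$-series manipulation, which is routine. For the paper I would present the analytic proof as the primary argument and, if space permits, sketch the bijection.
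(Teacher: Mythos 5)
Your analytic argument is correct and is essentially the same as the paper's own analytic proof: both multiply the generating function $\frac{1}{(q^{2r};q^{2r})_\infty (q;q^2)_\infty}$ of $\pd_{\overline 0,r}(n)$ by $\sum_{j\ge 0} q^{rT_j}=\frac{(q^{2r};q^{2r})_\infty}{(q^r;q^{2r})_\infty}$ (Gauss's identity with $q\mapsto q^r$) and simplify via Euler's identity to $\frac{(-q^r;q^r)_\infty}{(q;q^2)_\infty}=\sum_{n\ge 0}\ped_{0,r}(n)q^n$. (The paper additionally gives a complete combinatorial proof, via Andrews' evaluation of $\ped_{e-o}$ and an involution on pairs of partitions, which differs from your unfinished bijective sketch; but since you present the analytic proof as the primary argument, nothing is missing.)
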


\begin{proof}[Analytic Proof]
 Using  \cite[Equation (2.2.13)]{A98} with $q$ replaced by $q^r$, we see that $\displaystyle \sum_{j = 0}^\infty q^{rT_j} = \frac{(q^{2r}; q^{2r})_\infty}{(q^r; q^{2r})_\infty}$. 
Thus, 
\begin{align*}
\displaystyle \sum_{n=0}^\infty\left(\sum_{j=0}^{\infty} pd_{\overline{0}, r}(n - rT_j)\right)q^n & = \sum_{j=0}^{\infty} q^{rT_j}\sum_{n=0}^{\infty}  pd_{\overline{0}, r}(n)q^n\\
& = \frac{(q^{2r}; q^{2r})_\infty}{(q^r; q^{2r})_\infty}\frac{1}{(q^{2r}; q^{2r})_\infty (q; q^2)_\infty}\\
& = \frac{1}{(q^{r}; q^{2r})_\infty (q; q^2)_\infty}\\ & =\frac{(-q^r;q^r)_\infty}{(q;q^2)_\infty}\\  & = \sum_{n=0}^\infty \ped_{0,r}(n)q^n.
\end{align*}
In the second to last equality, we used Euler's identity \eqref{Euler} with $q$ replaced by $q^r$. 
\end{proof}
\begin{proof}[Combinatorial Proof] Let $n\geq 0$ and $r\geq 2$ even.
In \cite{A72}, Andrews showed combinatorially that $$\ped_{e-o}(n)=\begin{cases}(-1)^{n} & \text{if }n=T_k, \ k\in \mathbb N_0 \\ 0 & \text{else}.\end{cases}$$ Denote by $\mathcal{PD}^r_{0, 2r}(n)$ the set of partitions of $n$ with all parts divisible by $r$ and parts congruent to $0$ mod $2r$ distinct, and set $\pd^r_{0,2r}(n):=|\mathcal{PD}^r_{0, 2r}(n)|$.  We modify Andrews' proof by multiplying each part of each partition by $r$ to obtain a combinatorial proof for  $$\pd^r_{0,2r,e-o}(n)=\begin{cases}(-1)^{n/r} & \text{if }n=rT_k, \ k\in \mathbb N_0 \\ 0 & \text{else}.\end{cases}$$
Notice that if $\lambda \in \mathcal{PD}^r_{0, 2r}$, then all parts of $\lambda$ are divisible by $r$ and if we write $\lambda=(\lambda^{2r\mid}, \lambda^{2r\nmid})$, then  $$\ell(\lambda)=\ell\big(\frac{1}{r} \lambda\big)=\ell\big(\frac{1}{r}\lambda^{2r\mid}\big)+\ell\big(\frac{1}{r}\lambda^{2r\nmid}\big).$$ Since  $\frac{1}{r}\lambda^{2r\mid}$ has even parts and $\frac{1}{r}\lambda^{2r\nmid}$ has odd parts, we have  \begin{align*}\ell(\lambda)& \equiv \ell\big(\frac{1}{r}\lambda^{2r\mid}\big)+\frac{1}{r}|\lambda|\pmod 2\\ & \equiv \ell(\lambda^{2r\mid})+\frac{1}{r}|\lambda|\pmod 2.\end{align*}

 In the combinatorial proof of Theorem \ref{pe}, we gave a bijection between $\mathcal{PD}_{\overline 0, r}(n)$ and $\mathcal{PE}_{0,2r}(n)$.
Then 
\begin{align*}
\displaystyle  \sum_{j=0}^{\infty}  &  pd_{\overline{0}, r}(n - rT_j) =  \\
&  |\{(\lambda, \mu)  =(\lambda^{2r\mid}, \lambda^{2r\nmid}, \mu^{2r\mid}, \mu^{2r\nmid})\vdash n  \ | \  \lambda \in \mathcal{PD}^r_{0, 2r}, \, \mu \in \mathcal{PE}_{0, 2r}, \ell(\lambda^{2r\mid}) \, \textrm{even} \}|\\
& \quad -  |\{(\lambda, \mu)=(\lambda^{2r\mid}, \lambda^{2r\nmid}, \mu^{2r\mid}, \mu^{2r\nmid}) \vdash n \ | \  \lambda \in \mathcal{PD}^r_{0, 2r}, \, \mu \in \mathcal{PE}_{0, 2r}, \ell(\lambda^{2r\mid}) \, \textrm{odd} \}|.
\end{align*}

 We will define an involution on the set $$\{(\lambda, \mu)=(\lambda^{2r\mid}, \lambda^{2r\nmid}, \mu^{2r\mid}, \mu^{2r\nmid})\vdash n \  \Big| \  \lambda \in \mathcal{PD}^r_{0, 2r}, \, \mu \in \mathcal{PE}_{0, 2r}, \, 
(\lambda^{2r\mid}, \mu^{2r\mid})\neq (\emptyset, \emptyset)\}.$$ Recall that $\lambda^{2r\mid}$ is a partition with distinct parts. 

\noindent \underline{Case 1.} If $\lambda_1^{2r\mid} \geq \mu_1^{2r\mid}$,  then

$$(\lambda^{2r\mid}, \lambda^{2r\nmid}, \mu^{2r\mid}, \mu^{2r\nmid})\mapsto (\lambda^{2r\mid}\setminus \lambda_1^{2r\mid}, \lambda^{2r\nmid}, \mu^{2r\mid}\cup (\lambda_1^{2r\mid}), \mu^{2r\nmid}).$$

\noindent \underline{Case 2.} If $\lambda_1^{2r\mid} < \mu_1^{2r\mid}$,  then

$$(\lambda^{2r\mid}, \lambda^{2r\nmid}, \mu^{2r\mid}, \mu^{2r\nmid})\mapsto (\lambda^{2r\mid}\cup \mu_1^{2r\mid}, \lambda^{2r\nmid}, \mu^{2r\mid}\setminus (\mu_1^{2r\mid}), \mu^{2r\nmid}).$$

This is an involution that changes the parity of $\ell(\lambda^{2r\mid})$. Hence, if $\mathcal O$ denotes the set of odd partitions, we have  
\begin{align*}
\displaystyle \sum_{j=0}^{\infty} pd_{\overline{0}, r}(n - rT_j)
= |\{( \lambda^{2r\nmid},  \mu^{2r\nmid} & )\vdash n  \ | \  \lambda^{2r\nmid} \in \mathcal{P}_{r, 2r}, \, \mu^{2r\nmid}\in \mathcal O  \}|.
\end{align*}

Since $\lambda^{2r\nmid}$ has all parts congruent to $r$ modulo $2r$,  the partition   $r\varphi_G(\frac{1}{r}\lambda^{2r\nmid})$ has distinct parts divisible by $r$. We have 

\begin{align*}
\displaystyle \sum_{j=0}^{\infty} pd_{\overline{0}, r}(n - rT_j)
& = |\{( r\varphi_G(\frac{1}{r}\lambda^{2r\nmid}),  \mu^{2r\nmid}  )\vdash n  \ | \  \lambda^{2r\nmid} \in \mathcal{P}_{r, 2r}, \, \mu^{2r\nmid}\in \mathcal O \}|\\ & = \ped_{0,r}(n).
\end{align*}
\end{proof}
For $n\geq 0$ and $r\geq 2$ even, let  $\mathcal{PE}_{r,2r}(n)$ be  the set of partitions of $n$ where even parts are congruent to $r$ modulo  $2r$, and set $\pe_{r,2r}(n):=|\mathcal{PE}_{r,2r}(n)|$.  If we write $\lambda\in \mathcal{PE}_{r,2r}(n)$ as $\lambda=(\lambda^{2\mid}, \lambda^{2\nmid})$,  then $\lambda^{2\mid}=r\gamma$ with $\gamma$ an odd partition. Hence $r\varphi_G(\gamma)\cup \lambda^{2\nmid}$ is a partition of $n$ in which all even parts are divisible by $r$ and distinct. This transformation is clearly invertible. Hence, $\pe_{r,2r}(n)=\textrm{ped}_{0,r}(n)$.  Then, Theorem \ref{ped0r} can be rewritten as follows.

\begin{corollary} Let $n \geq 0$ and $r \geq 2$ be even. Then $$\pe_{r,2r}(n)=\displaystyle \sum_{j=0}^{\infty} pd_{\overline{0}, r}(n - rT_j).$$ 
\end{corollary}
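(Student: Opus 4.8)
The plan is to obtain the Corollary as an immediate consequence of Theorem \ref{ped0r}, by establishing the bijective identity $\pe_{r,2r}(n)=\ped_{0,r}(n)$ and substituting. In fact the paragraph immediately preceding the statement already sketches exactly this bijection; the proof of the Corollary is just a matter of recording it cleanly. So first I would restate the transformation: given $\lambda\in \mathcal{PE}_{r,2r}(n)$, split it as $\lambda=(\lambda^{2\mid},\lambda^{2\nmid})$, where $\lambda^{2\mid}$ collects the even parts (all congruent to $r$ modulo $2r$) and $\lambda^{2\nmid}$ collects the odd parts. Since every part of $\lambda^{2\mid}$ is $r$ times an odd number, write $\lambda^{2\mid}=r\gamma$ with $\gamma$ an odd partition. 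Apply Glaisher's bijection $\varphi_G$ (with $r=2$, i.e. the Euler map) to $\gamma$ to get a distinct partition $\varphi_G(\gamma)$, so that $r\varphi_G(\gamma)$ is a partition into distinct parts each divisible by $r$. Then $r\varphi_G(\gamma)\cup\lambda^{2\nmid}$ lies in $\mathcal{PED}_{0,r}(n)$: its even parts are exactly the parts of $r\varphi_G(\gamma)$, which are distinct and divisible by $r$ (here one uses that $r$ is even so that all parts of $r\varphi_G(\gamma)$ are genuinely even), and its odd parts are the unrestricted parts of $\lambda^{2\nmid}$.

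Next I would check invertibility. Starting from $\pi\in\mathcal{PED}_{0,r}(n)$, write $\pi=(\pi^{2\mid},\pi^{2\nmid})$; the even parts $\pi^{2\mid}$ are distinct and divisible by $r$, so $\pi^{2\mid}=r\delta$ with $\delta$ a distinct partition, and $\varphi_G^{-1}(\delta)$ is an odd partition, whence $r\varphi_G^{-1}(\delta)$ has all parts congruent to $r$ modulo $2r$. Reassembling, $r\varphi_G^{-1}(\delta)\cup\pi^{2\nmid}\in\mathcal{PE}_{r,2r}(n)$. These two maps are mutually inverse because $\varphi_G$ and $\varphi_G^{-1}$ are, and multiplication/division by $r$ is clearly invertible on the relevant sets; the odd parts are carried through untouched on both sides. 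This gives $\pe_{r,2r}(n)=\ped_{0,r}(n)$ for all $n\geq 0$ and even $r\geq 2$. Finally, substitute this equality into Theorem \ref{ped0r} to conclude $\pe_{r,2r}(n)=\sum_{j=0}^{\infty} pd_{\overline{0},r}(n-rT_j)$.

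Alternatively, and perhaps cleaner for the write-up, I could give a direct generating-function proof: by Euler's identity \eqref{Euler} the generating function for $\pe_{r,2r}(n)$ is $\frac{(-q^r;q^r)_\infty}{(q;q^2)_\infty}$, which is precisely the generating function for $\ped_{0,r}(n)$ appearing in the analytic proof of Theorem \ref{ped0r}, and then quote Theorem \ref{ped0r}. I expect no real obstacle here; the only point requiring a word of care is the hypothesis that $r$ is even, which is needed to guarantee that ``even parts of $\pi$'' coincide with ``parts of $\pi$ divisible by $r$'' after the Glaisher step — without it the bookkeeping of which parts are even breaks down. Since the bijection $\pe_{r,2r}(n)=\ped_{0,r}(n)$ is already spelled out in the text, the proof of the Corollary itself can be stated in one or two sentences, simply invoking that identity together with Theorem \ref{ped0r}.
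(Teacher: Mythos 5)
Your proposal is correct and matches the paper's route exactly: the paragraph preceding the corollary establishes $\pe_{r,2r}(n)=\ped_{0,r}(n)$ by the same Glaisher-based bijection you describe, and the corollary is then just Theorem \ref{ped0r} restated. Nothing further is needed.
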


Considering partitions where even parts are congruent to $r$ modulo $2r$ focuses on restricting which even parts can be used instead of placing a restriction on multiplicity. This is similar to 
the focus in the definition of $4$-regular partitions. Then, in analogy to the identity $b_4(n) = \ped(n)$, in the next theorem we show that, when $r$ is even, $\pe_{r,2r}(n)$  is also equal to the number of partitions of $n$ where  the multiplicity of each even part is restricted to be exactly $r/2$. 

\begin{theorem}
For $n \geq 0$ and $r \geq 2$ even, $\pe_{r,2r}(n)$  equals the number of partitions of $n$ where even parts have multiplicity exactly $r/2.$
\end{theorem}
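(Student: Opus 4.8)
The plan is to give both an analytic and a combinatorial proof, each of which is short. For the analytic proof I would simply compute the two generating functions and observe that they coincide. A partition counted on the right-hand side splits uniquely into its sub-partition of odd parts, contributing $1/(q;q^2)_\infty$, and its sub-partition of even parts; since $r$ is even, $r/2\in\mathbb N$, and each even value $2k$ is either absent or occurs exactly $r/2$ times, so the even parts contribute $\prod_{k\ge 1}(1+q^{2k\cdot r/2})=\prod_{k\ge 1}(1+q^{kr})=(-q^r;q^r)_\infty$. Hence the right-hand side has generating function $(-q^r;q^r)_\infty/(q;q^2)_\infty$, which is exactly $\sum_{n\ge 0}\pe_{r,2r}(n)q^n$: recall that $\pe_{r,2r}(n)=\ped_{0,r}(n)$ was established in the paragraph preceding the theorem, and $\sum_{n\ge 0}\ped_{0,r}(n)q^n=(-q^r;q^r)_\infty/(q;q^2)_\infty$ is computed in the analytic proof of Theorem \ref{ped0r}.

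For the combinatorial proof I would again use $\pe_{r,2r}(n)=\ped_{0,r}(n)$ and exhibit a direct bijection between $\mathcal{PED}_{0,r}(n)$ and the set of partitions of $n$ in which every even part has multiplicity exactly $r/2$. Given $\lambda\in\mathcal{PED}_{0,r}(n)$, leave the odd parts of $\lambda$ unchanged; each even part of $\lambda$ has the form $rj$ for some $j\ge 1$ (divisible by $r$, and even because $r$ is even), and these values $rj$ are pairwise distinct. Replace each such part $rj$ by $r/2$ copies of $2j$. Since the $rj$ are distinct, the resulting even values $2j$ are distinct, so in the image each even value occurs either $0$ or exactly $r/2$ times, and the size is unchanged because $(r/2)(2j)=rj$. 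The inverse takes a partition in which each even value $2j$ that occurs does so exactly $r/2$ times, replaces those $r/2$ copies by the single part $rj$, and fixes odd parts; the output lies in $\mathcal{PED}_{0,r}(n)$. This is plainly a bijection.

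I do not anticipate a genuine obstacle: the content is entirely routine, and the only point needing attention is the hypothesis that $r$ is even, used both so that $r/2$ is an integer and so that a part of the form $rj$ is automatically even, which is what makes the two notions of \emph{even part} match up. If one prefers a proof that does not quote the earlier identity $\pe_{r,2r}(n)=\ped_{0,r}(n)$, I would instead start from $\lambda\in\mathcal{PE}_{r,2r}(n)$, write its even sub-partition as $r\gamma$ with $\gamma$ an odd partition, set $\delta=\varphi_G(\gamma)$ by Glaisher's bijection, and send $\lambda$ to the partition having the same odd parts together with $r/2$ copies of $2\delta_i$ for each part $\delta_i$ of $\delta$; this is just the composition of the two bijections already described.
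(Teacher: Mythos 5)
Your proposal is correct and matches the paper's argument in substance: the analytic part is the same generating function computation $(-q^r;q^r)_\infty/(q;q^2)_\infty$, and your combinatorial bijection (exchanging one part $rj$ for $r/2$ copies of $2j$) composed with the previously established identity $\pe_{r,2r}(n)=\ped_{0,r}(n)$ is exactly the paper's splitting/merging-plus-Glaisher bijection, just factored through $\mathcal{PED}_{0,r}(n)$ instead of applying $\varphi_G^{-1}$ inside the proof. The only cosmetic difference is that the paper finishes the analytic proof with one more application of Euler's identity to land directly on the generating function of $\mathcal{PE}_{r,2r}(n)$, while you cite the earlier identity; both are valid.
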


\begin{proof}[Analytic Proof] The generating function for the number of partitions of $n$ where even parts have multiplicity exactly $r/2$ is 

\begin{align*}
 \frac{1}{(q; q^2)_\infty}  \prod_{i = 1}^\infty & (1 + q^{2i\frac{r}{2}})\\
= &  \frac{1}{(q; q^2)_\infty} \prod_{i = 1}^\infty (1 + q^{ir})\\
= & \frac{(-q^r; q^r)_\infty}{(q; q^2)_\infty}\\
= & \frac{1}{(q; q^2)_\infty(q^r; q^{2r})_\infty},
\end{align*}

\noindent which is the generating function for the number of partitions of $n$ where even parts are congruent to $r$ modulo $2r$. 
\end{proof}

\begin{proof}[Combinatorial Proof] 
Let $\lambda$ be a partition of $n$ where even parts have multiplicity exactly $r/2$. Write $\lambda = (\lambda^{2\nmid}, \lambda^{2\mid})$.  For all $\lambda_i=2t_i \in \lambda^{2\mid}$, split $\lambda_i$ into two parts equal to $t_i$ to obtain a partition in which each part has multiplicity $r$. Merge all equal parts into a single part to obtain a distinct partition $\mu$ with parts congruent to $0$ modulo $r$. Then, $\mu=r\nu$ for a distinct partition $\nu$ and $r\varphi^{-1}_G(\nu)$ is a partition with even parts congruent to $r$ mod $2r$. Hence, $\lambda^{2\nmid}\cup r\varphi^{-1}_G(\nu)\in \mathcal{PE}_{r,2r}(n)$. Since all steps above can be reversed, the transformation is invertible.
\end{proof}

\begin{corollary}
For $n \geq 0$ and $r > 0$ even, $\displaystyle \sum_{j=0}^{\infty} pd_{\overline{0}, r}(n - rT_j)$ is equal to the number of partitions of $n$ where even parts have multiplicity exactly $r/2.$
\end{corollary}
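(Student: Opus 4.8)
The plan is to deduce this corollary by chaining together three facts already established in this section, so that it becomes a bookkeeping statement rather than a new computation. Writing $f(n)$ for the number of partitions of $n$ in which every even part has multiplicity exactly $r/2$, the identity to be assembled is
\[
\sum_{j=0}^{\infty} pd_{\overline{0}, r}(n - rT_j)\;=\;\ped_{0,r}(n)\;=\;\pe_{r,2r}(n)\;=\;f(n),
\]
valid for $n\geq 0$ and $r\geq 2$ even.

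First I would invoke Theorem~\ref{ped0r}, which is exactly the first equality. Next I would use the invertible transformation recorded in the discussion following the proof of Theorem~\ref{ped0r}: every $\lambda=(\lambda^{2\mid},\lambda^{2\nmid})\in\mathcal{PE}_{r,2r}(n)$ has $\lambda^{2\mid}=r\gamma$ for an odd partition $\gamma$, and $\lambda\mapsto r\varphi_G(\gamma)\cup\lambda^{2\nmid}$ carries $\mathcal{PE}_{r,2r}(n)$ bijectively onto $\mathcal{PED}_{0,r}(n)$, giving $\pe_{r,2r}(n)=\ped_{0,r}(n)$. Finally, the theorem immediately preceding this corollary states precisely $\pe_{r,2r}(n)=f(n)$. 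Reading the chain from left to right proves the corollary.

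I do not anticipate a genuine obstacle, since all three ingredients are in hand; the only thing to verify is that the partition classes line up, which they do by the way the sets were defined. If a self-contained bijective proof is wanted instead, I would compose the underlying maps: the combinatorial proof of Theorem~\ref{ped0r} already constructs an explicit bijection onto $\mathcal{PED}_{0,r}(n)$ from pairs consisting of a scaled triangular staircase and a partition counted by $pd_{\overline{0},r}$, while the combinatorial proof of the preceding theorem turns a partition with even parts of multiplicity exactly $r/2$ into one in $\mathcal{PE}_{r,2r}(n)$ by halving the even parts, merging equal parts, applying $\varphi_G^{-1}$, and rescaling by $r$. Since $\mathcal{PE}_{r,2r}(n)$ and $\mathcal{PED}_{0,r}(n)$ are identified by the transformation above, composing these maps yields an explicit bijection realizing the corollary.
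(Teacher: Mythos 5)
Your proposal is correct and matches the paper's (implicit) argument exactly: the corollary is stated without proof precisely because it follows by chaining Theorem~\ref{ped0r}, the observation $\pe_{r,2r}(n)=\ped_{0,r}(n)$, and the immediately preceding theorem, which is the chain you assemble. Note also that $r>0$ even is the same as $r\geq 2$ even, so the hypotheses line up with the cited results.
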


Next, we give a final generalization of POD partitions which is very similar to the one given in Section \ref{s_3}, where we considered partitions with parts congruent to $t$ modulo $r$ distinct and other parts unrestricted; however, now we restrict the multiplicity of parts congruent to $\pm t$ modulo $r$. Because the two generalizations are so similar, we omit the proofs and instead note how to adapt the proofs of the first generalization to suit this case. 

Fix $r\geq 2$ and $t$ such that $0 < t < r / 2$.  Denote by $\mathcal{PD}_{\pm t,r}(n)$ the set of partitions of $n$ in which parts congruent to $\pm t$ modulo $r$ are distinct and all other parts are unrestricted, and  set $\pd_{\pm t,r}(n):=|\mathcal{PD}_{\pm t,r}(n)|$. Denote by  $\mathcal{P}_{\overline{\pm 2t},2r}(n)$  the set of partitions of $n$ with no parts congruent to $\pm 2t$ modulo $2r$, and set $\textrm{p}_{\overline{\pm 2t},2r}(n):= |\mathcal{P}_{\overline{\pm 2t},2r}(n) |$.

\begin{theorem}\label{thirdpod}  Let $r\geq 2$ and $0 < t < r / 2$.  If $n \geq 0$, then $pd_{\pm t,r}(n) = p_{\overline{\pm 2t},2r}(n).$
\end{theorem}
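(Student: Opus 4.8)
The plan is to imitate, essentially verbatim, the proof of Theorem~\ref{1st_gen}, since $\mathcal{PD}_{\pm t,r}$ and $\mathcal{P}_{\overline{\pm 2t},2r}$ are the two-residue analogues of $\mathcal{PD}_{t,r}$ and $\mathcal{P}_{\overline{2t},2r}$. Note that the hypothesis $0<t<r/2$ guarantees $t$ and $r-t$ are \emph{distinct} residues mod $r$, and correspondingly $2t$ and $2r-2t$ are distinct residues mod $2r$ (and both are nonzero, so we are genuinely in the ``Case~1'' situation of Theorem~\ref{1st_gen}, never the $t=0$ case that required Glaisher's bijection).

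For the analytic proof I would compute the generating function directly. Parts congruent to $t$ or $r-t$ modulo $r$ are required to be distinct, contributing $(-q^t;q^r)_\infty(-q^{r-t};q^r)_\infty$, while all other parts are unrestricted; dividing by $(q;q)_\infty$ and multiplying by $(q^t;q^r)_\infty(q^{r-t};q^r)_\infty$ to cancel the unwanted factors, one gets
\begin{align*}
\sum_{n=0}^\infty pd_{\pm t,r}(n)q^n
&= \frac{(q^t;q^r)_\infty(q^{r-t};q^r)_\infty}{(q;q)_\infty}\cdot(-q^t;q^r)_\infty(-q^{r-t};q^r)_\infty\\
&= \frac{(q^{2t};q^{2r})_\infty(q^{2r-2t};q^{2r})_\infty}{(q;q)_\infty}
= \sum_{n=0}^\infty p_{\overline{\pm 2t},2r}(n)q^n,
\end{align*}
using $(x;q)_\infty(-x;q)_\infty=(x^2;q^2)_\infty$ with $q$ replaced by $q^r$ applied to each of the two pairs, together with the observation that the $q^{2r}$-Pochhammer symbols $(q^{2t};q^{2r})_\infty(q^{2r-2t};q^{2r})_\infty$ in the numerator exactly remove the parts congruent to $\pm 2t$ mod $2r$ from $1/(q;q)_\infty$.

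For the combinatorial proof I would follow Case~1 of Theorem~\ref{1st_gen}'s combinatorial proof: given $\lambda\in\mathcal{PD}_{\pm t,r}(n)$, replace each part $\lambda_i\equiv \pm 2t\pmod{2r}$ by two copies of $\lambda_i/2$ (note such a part is even and $\lambda_i/2\equiv \pm t\pmod r$), producing a partition with no parts $\equiv\pm 2t\pmod{2r}$; conversely, split $\mu\in\mathcal{P}_{\overline{\pm 2t},2r}(n)$ as $\alpha\cup\beta$ where $\alpha$ collects a distinct set of parts $\equiv\pm t\pmod r$ and $\beta$ has all parts $\equiv\pm t\pmod r$ occurring with even multiplicity, then merge each such pair in $\beta$ into its sum. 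The one point requiring a sentence of care — and the place I expect the only real friction — is verifying that merging two equal parts $c\equiv\pm t\pmod r$ yields $2c\equiv\pm 2t\pmod{2r}$ and, crucially, that the inverse is well-defined: a part $\equiv\pm t\pmod r$ but $\not\equiv\pm 2t\pmod{2r}$ that appears with even multiplicity must be paired off, and a part $\equiv\pm 2t\pmod{2r}$ halves to something $\equiv\pm t\pmod r$; here one uses that doubling sends the residue $t$ mod $r$ to $2t$ mod $2r$ bijectively on the relevant classes, which is where $0<t<r/2$ is used to keep $\pm t$ and $\pm 2t$ honest pairs of distinct classes. Since, as the authors say, the argument is ``so similar'' to the earlier one, I would state the proof compactly, citing the combinatorial proof of Theorem~\ref{1st_gen} (Case~1) as the template and only spelling out the residue bookkeeping for the two-residue case.
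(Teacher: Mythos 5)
Your overall route is the same as the paper's: the paper proves Theorem \ref{thirdpod} precisely by instructing the reader to adapt both proofs of Theorem \ref{1st_gen} to the residues $\pm t$, and your analytic proof is a correct and complete execution of that instruction (splitting off the two classes, applying $(x;q^r)_\infty(-x;q^r)_\infty=(x^2;q^{2r})_\infty$ to each, and using $0<t<r/2$ to keep $t,\,r-t$ and $2t,\,2r-2t$ distinct nonzero residues). That part alone establishes the theorem.

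The combinatorial half, however, has a genuine gap exactly at the point you dismiss with ``a sentence of care,'' and your claim that one is ``never in the $t=0$ case'' is not quite right. When $3t=r$ (e.g.\ $r=3,t=1$ or $r=6,t=2$) one has $2t\equiv -t\pmod r$, so the excluded classes $\pm 2t\pmod{2r}$ lie inside the restricted classes $\pm t\pmod r$, and the single-pass split/merge is not even a map between the two sets. Concretely, for $r=3,t=1$: the part $8\equiv 2\pmod 6$ splits into $(4,4)$, but $4\equiv -2\pmod 6$ is forbidden in $\mathcal{P}_{\overline{\pm 2},6}$; conversely $(1^8)\in\mathcal{P}_{\overline{\pm 2},6}(8)$ merges in one pass to $(2^4)$, which has a repeated part $\equiv -1\pmod 3$ and so is not in $\mathcal{PD}_{\pm 1,3}(8)$. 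The issue is that when $3t=r$ the union of the classes $\pm t\pmod r$ is closed under doubling, so you are structurally in the ``Case 2'' ($t=0$) situation of Theorem \ref{1st_gen}: the splitting/merging must be iterated, Glaisher-style, until no parts $\equiv\pm 2t\pmod{2r}$ (respectively no repeated parts $\equiv\pm t\pmod r$) remain, or the case $3t=r$ must be treated separately. For all other $t$ in the range your bookkeeping is fine, since $3t\equiv 0$ or $r\pmod{2r}$ is impossible there. To be fair, the paper's one-line ``perform the same operations'' glosses over the same subtlety; but since you explicitly assert the residue check goes through, you should either add the iteration for $3t=r$ or rely solely on your (complete) analytic proof.
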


Both the combinatorial and analytic proofs of Theorem \ref{thirdpod} can be adapted from the proofs of Theorem \ref{1st_gen} by performing the same operations on parts congruent to $\pm t$ modulo $r$ that are done to parts congruent to $t$ modulo $r$.

\begin{theorem} For $n\geq 0$, $r\geq 2$, and $0 < t < r/2$, we have $pd_{\pm t,r, e-o}(n)=QE_{\pm 2t, 2r, e-o}(n)$. 
    \end{theorem}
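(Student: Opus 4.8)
The plan is to mirror the proof of Theorem \ref{prd_eo} verbatim, with the single residue $t$ replaced everywhere by the pair of residues $\pm t$. Since the authors have already announced (in the paragraph preceding Theorem \ref{thirdpod}) that they ``omit the proofs and instead note how to adapt the proofs of the first generalization,'' the right move here is a short remark rather than a full argument. Concretely, I would first give the analytic proof: the generating function for $pd_{\pm t,r,e-o}(n)$ is
\[
\frac{(-q^t;q^r)_\infty(-q^{r-t};q^r)_\infty}{(-q;q)_\infty}\cdot (q^t;q^r)_\infty(q^{r-t};q^r)_\infty
=\frac{(q^{2t};q^{2r})_\infty(q^{2(r-t)};q^{2r})_\infty}{(-q;q)_\infty},
\]
using $(1+x)(1-x)=1-x^2$ on each of the two residue classes (note $0<t<r/2$ guarantees $t$ and $r-t$ are distinct residues mod $r$, so the two Pochhammer factors are genuinely separate). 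Then applying Euler's identity \eqref{Euler} turns the denominator into $(q;q^2)_\infty$, and the resulting product $(q^{2t};q^{2r})_\infty(q^{2(r-t)};q^{2r})_\infty(q;q^2)_\infty$ is exactly the generating function for $QE_{\pm 2t,2r,e-o}(n)$: distinct partitions weighted by $(-1)^{\ell}$ have generating function $(-q;q)_\infty$ restricted to odd parts and even parts congruent to $\pm 2t \bmod 2r$, which after the sign-reversing-on-length sign becomes the stated product. This is essentially the same chain of equalities as in the proof of Theorem \ref{prd_eo}.

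Alternatively (or additionally) I would give the combinatorial proof by copying the involution from the proof of Theorem \ref{prd_eo}. Let $\mathcal S(n)$ be the set of $\pi\in\mathcal{PD}_{\pm t,r}(n)$ having at least one repeated part or at least one even part not congruent to $\pm 2t\bmod 2r$, so that $\mathcal{PD}_{\pm t,r}(n)\setminus\mathcal S(n)=\mathcal{QE}_{\pm 2t,2r}(n)$. Define $a=a(\pi)$ to be the largest repeated part and $b=b(\pi)$ the largest even part not congruent to $\pm 2t\bmod 2r$; if $2a>b$ merge $(a,a)\mapsto(2a)$, and if $2a\le b$ split $(b)\mapsto(b/2,b/2)$. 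The only thing to check is that these moves stay inside $\mathcal S(n)$: since $a\not\equiv\pm t\bmod r$ we get $2a\not\equiv\pm 2t\bmod 2r$, and since $b\not\equiv\pm 2t\bmod 2r$ we get $b/2\not\equiv\pm t\bmod r$ — here again the hypothesis $0<t<r/2$ is what makes ``$\not\equiv\pm t\bmod r$'' the correct complementary condition and keeps the doubling/halving map well-defined on residues. The involution reverses the parity of $\ell(\pi)$, which gives the identity.

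I do not expect a genuine obstacle here; the hypothesis $0<t<r/2$ is exactly what is needed so that $\{t,r-t\}$ and $\{2t, 2(r-t)\}=\{2t, -2t\}$ are two-element sets of residues (no collisions with $0$ or with each other), and with that in hand every step of the Theorem \ref{prd_eo} argument goes through unchanged. The only mildly delicate point is bookkeeping: confirming that ``even part not congruent to $\pm 2t\bmod 2r$'' is preserved under both halving and doubling, which is the residue computation just described. Accordingly, the cleanest writeup is simply: \emph{The analytic proof is identical to that of Theorem \ref{prd_eo} after replacing each factor $(\pm q^t;q^r)_\infty$ by $(\pm q^t;q^r)_\infty(\pm q^{r-t};q^r)_\infty$; the combinatorial proof is identical after replacing ``congruent to $t$ modulo $r$'' by ``congruent to $\pm t$ modulo $r$'' and ``congruent to $2t$ modulo $2r$'' by ``congruent to $\pm 2t$ modulo $2r$'' throughout.}
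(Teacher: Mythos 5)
Your proposal is correct and matches the paper's approach exactly: the paper gives no separate argument, stating only that the proof follows from that of Theorem \ref{prd_eo} by performing the same operations on parts congruent to $\pm t$ modulo $r$, which is precisely what you do (with the analytic chain and the residue bookkeeping for $0<t<r/2$ helpfully made explicit).
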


Again, the proof follows from the proof of Theorem \ref{prd_eo} by performing the same operations on parts congruent to $\pm t$ modulo $r$ that are done to parts congruent to $t$ modulo $r$.

\begin{remark} Set $t = 1$ in Theorem \ref{thirdpod}. When $r=3,$ $pd_{\pm 1, 3}(n) = pd_{\overline 0,3}(n)$, as $m\equiv \pm 1\pmod 3$ if and only if $m\not \equiv 0 \pmod 3$. When $r=4$, $pd_{\pm 1, 4}(n)=\textrm{pod}(n)$. Moreover, when $r = 6$, the sequence $pd_{\pm 1, 6}(n)$ is listed in the OEIS \cite[A265254]{OEIS} as the number of partitions of $n$ in which even parts (if any) are not distinct. We examine these partitions further in Section \ref{s_5}. For larger $r$, the sequence does not occur in \cite{OEIS}.
\end{remark}

\medskip

\section{Generalizations of partitions with even parts not distinct}\label{s_5}

When defining  PED (and POD) partitions, we restrict even (respectively odd) parts to be distinct. If instead we reverse that idea, we can consider partitions where certain parts, based on their parity, must not be distinct. Denote by $\mathcal{PEND}(n)$ the set of partitions of $n$ in which even parts are not distinct, and set $\pend(n) := | \mathcal{PEND}(n)|.$ Similarly, denote by $\mathcal{POND}(n)$ the set of partitions of $n$ in which odd parts are not distinct, and set $\pond(n) : = |\mathcal{POND}(n)|$. We refer to these partitions as PEND (respectively POND) partitions.

As mentioned in the previous section, the  generalization of $\textrm{pod}(n)$ (for $r=6$) given by  $pd_{\pm 1, 6}(n)$, is listed in the OEIS \cite[A265254]{OEIS} as counting PEND partitions. This led us to attempt to  generalize both PEND and POND partitions by extending the multiplicity condition to other residue classes as we have for POD and PED partitions. 

Let $r\geq 2$ and $0\leq t <r$.    Denote by $\mathcal{PND}_{t, r}(n)$ the set of partitions of $n$ where parts congruent to $t$ modulo $r$ are not distinct, and set $\textrm{pnd}_{t, r} := |\mathcal{PND}_{t, r}(n)| $. 
Denote by $\mathcal{PEM}_{\pm t, r}(n)$ the set of partitions of $n$ with parts congruent to $\pm t$ modulo $r$ having even multiplicity, and set $\textrm{pem}_{\pm t, r}(n) := |\mathcal{PEM}_{\pm t, r}(n)|.$

\begin{theorem}\label{Th_prnd}
For $n \geq 0,$ $r\geq 2$, we have $\textrm{pnd}_{0, r}(n) = \textrm{pem}_{\pm r, 3r}(n).$
\end{theorem}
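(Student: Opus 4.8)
The plan is to mirror the structure used throughout Sections \ref{s_3}--\ref{s_4}: prove the generating function identity first, then supply a bijective argument. For the analytic proof, I would compute the generating function for $\mathcal{PND}_{0,r}(n)$ by factoring the product over parts according to residue mod $r$. Parts not congruent to $0$ mod $r$ are unrestricted, contributing $\ds\prod_{r\nmid k}\frac{1}{1-q^k}$. Parts congruent to $0$ mod $r$ must be non-distinct, i.e.\ each such part is either absent or occurs with multiplicity at least $2$; the factor for a single such part $k=rj$ is $\ds 1+\frac{q^{2rj}}{1-q^{rj}}=\frac{1-q^{rj}+q^{2rj}}{1-q^{rj}}$. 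Using $1-x+x^2=\frac{1+x^3}{1+x}$ with $x=q^{rj}$, this factor equals $\ds\frac{(1+q^{3rj})}{(1+q^{rj})(1-q^{rj})}=\frac{1+q^{3rj}}{1-q^{2rj}}$. Multiplying over $j\geq 1$ and recombining with the unrestricted factors, I expect to land on a product that, after using Euler's identity \eqref{Euler} (with $q$ replaced by a power of $q$) to convert a $(-q^{3r};q^{3r})_\infty$ type factor, matches the generating function for $\textrm{pem}_{\pm r,3r}(n)$. That latter generating function is $\ds\frac{1}{(q;q)_\infty}\cdot\frac{(q^r;q^{3r})_\infty(q^{2r};q^{3r})_\infty}{\text{(unrestricted factors for }\pm r\text{)}}\cdot(\text{even-multiplicity factors})$, which simplifies to $\ds\frac{(q^r;q^{3r})_\infty(q^{2r};q^{3r})_\infty}{(q;q)_\infty}\cdot(-q^r;q^{3r})_\infty(-q^{2r};q^{3r})_\infty$ and then to $\ds\frac{(q^{2r};q^{6r})_\infty(q^{4r};q^{6r})_\infty}{(q;q)_\infty}$ after the difference-of-squares collapse. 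I would check the two products agree.

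Next I would give the combinatorial proof, which I expect to be a variant of the Case-2 ($t=0$) bijection in Theorem \ref{1st_gen} combined with the Glaisher-type manipulations already used in Theorem \ref{second pd}. Start with $\lambda\in\mathcal{PND}_{0,r}(n)$ and split $\lambda=(\lambda^{r\mid},\lambda^{r\nmid})$. Write $\lambda^{r\mid}=r\mu$ where $\mu$ is a partition whose every part has multiplicity at least $2$ — equivalently $\mu=\mu'\cup\mu''$ where $\mu''$ has all parts of even multiplicity and $\mu'$ is a partition where each part that appears does so with multiplicity $\geq 3$ (or organize it as: peel off pairs until what remains of each part is $0$ or $1$ copies, but since multiplicity is $\geq 2$ the "remainder $1$" case forces an extra triple — this is exactly the kind of bookkeeping that needs care). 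Cleaner: a partition with all multiplicities $\geq 2$ is a multiset union of a partition with all multiplicities even and a partition with all multiplicities equal to $3$ — no, that's also not quite a clean decomposition. I think the right move is: write $\mu$ (multiplicities $\geq 2$) as $2\alpha \sqcup \beta$ where $\beta$ has each part with multiplicity in $\{0,3\}$... Actually the factor $\frac{1+q^{3rj}}{1-q^{2rj}}$ tells the truth: each part $rj$ of $\lambda$ contributes either an even number of copies (the $\frac{1}{1-q^{2rj}}$ part), or an even number of copies plus $3$ more (the $q^{3rj}$ part). So decompose $\lambda^{r\mid}$ into a partition $E$ where every part has even multiplicity, plus a partition $T$ where every part that appears has multiplicity exactly $3$. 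Then on $E$: merge equal pairs, apply Glaisher-type doubling; on $T$: a triple of $rj$ becomes relevant to the "$\pm r$ mod $3r$ with even multiplicity" target; on $\lambda^{r\nmid}$: leave alone. The target $\mathcal{PEM}_{\pm r,3r}(n)$ wants parts $\equiv \pm r\pmod{3r}$ (i.e.\ $\equiv r$ or $2r\pmod{3r}$) to occur with even multiplicity. I would send a triple $(rj,rj,rj)$ to the single part $3rj$ (which is $\equiv 0$, absorbed) — no: $rj+rj+rj=3rj$, so three copies of $rj$ merge to one copy of $3rj$, a multiple of $3r$, hence unrestricted. Meanwhile the even-multiplicity parts get shuffled via Glaisher into parts $\equiv r,2r\pmod{3r}$ with even multiplicity. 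I will need to trace this through carefully to confirm invertibility.

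The main obstacle will be making the combinatorial bijection clean, specifically handling the ``multiplicity $\geq 2$'' condition, which does not decompose as smoothly as the ``distinct'' condition did in Theorem \ref{1st_gen}. The decomposition of a partition with all multiplicities $\geq 2$ into an even-multiplicity piece plus a piece with all multiplicities exactly $3$ is canonical (write multiplicity $m\geq 2$ as $2\lfloor m/2\rfloor$ if $m$ even, or $2\lfloor (m-3)/2\rfloor + 3$ if $m$ odd; note $m$ odd and $\geq 2$ means $m\geq 3$), so this does work, but verifying that the composite map lands in and bijects onto $\mathcal{PEM}_{\pm r,3r}(n)$ requires care with how Glaisher's bijection interacts with the residues mod $3r$ versus mod $r$. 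If the combinatorial argument proves too delicate, the analytic proof alone suffices, and I would fall back to only recording the generating function computation, as the paper does for several other results in this section.
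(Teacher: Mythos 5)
Your plan contains two genuine problems, one in each half. On the analytic side, your computation of the $\textrm{pnd}_{0,r}$ generating function is fine (the per-part factor $\frac{1+q^{3rj}}{1-q^{2rj}}$ is correct), but the generating function you write for $\textrm{pem}_{\pm r,3r}(n)$ is wrong: the condition that parts $\equiv\pm r\pmod{3r}$ have \emph{even multiplicity} contributes $\frac{1}{(q^{2r};q^{6r})_\infty(q^{4r};q^{6r})_\infty}$, whereas you multiplied in $(-q^r;q^{3r})_\infty(-q^{2r};q^{3r})_\infty$, which is the \emph{distinct}-parts factor. Your final expression $\frac{(q^{2r};q^{6r})_\infty(q^{4r};q^{6r})_\infty}{(q;q)_\infty}$ generates partitions with no part $\equiv\pm 2r\pmod{6r}$, which already disagrees with $\textrm{pnd}_{0,r}(n)$ at $n=r$ (it gives $p(r)$ rather than $p(r)-1$), so the check you defer would fail as written. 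With the correct even-multiplicity factor, both sides collapse to $\frac{1}{(q;q)_\infty(-q^r;q^{3r})_\infty(-q^{2r};q^{3r})_\infty}$; the paper reaches this by splitting $\prod_j(1+q^{rj})$ according to $j$ modulo $3$. (The correct ``excluded residues'' companion is $p_{\overline{\pm r},6r}(n)$, as in Corollary \ref{pnd-p}, not $\pm 2r$.)

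On the combinatorial side, the decomposition you eventually settle on — each part divisible by $r$ contributes an even number of copies, or an even number plus a triple, and each stripped triple of $rj$ merges into a single part $3rj$ — is exactly the paper's bijection, and it is \emph{all} that is needed: the leftover even multiplicities already satisfy the $\mathcal{PEM}_{\pm r,3r}$ condition and must simply be left untouched (parts divisible by $3r$ are unrestricted there anyway). The extra operations you propose on the even-multiplicity piece $E$ (merging equal pairs, Glaisher-type doubling, ``shuffling'' into residues $\pm r$ modulo $3r$) are not only unnecessary but would leave the target set: for $r=3$, the partition $(6,6)\in\mathcal{PND}_{0,3}(12)$ would be sent to $(12)$, and $12\equiv 3\pmod 9$ occurs with multiplicity $1$, so the image is not in $\mathcal{PEM}_{\pm 3,9}(12)$; they also make invertibility unclear. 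Drop the Glaisher machinery entirely; the inverse map is then immediate: in a partition of $\mathcal{PEM}_{\pm r,3r}(n)$, any part divisible by $r$ with odd multiplicity is forced to be divisible by $3r$, and one removes one copy of each such part and splits it into three equal parts.
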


\begin{proof}[Analytic Proof]
The generating function for $\textrm{pnd}_{0,r}(n)$ is given by \begin{align*}\sum_{n\geq 0}\textrm{pnd}_{0, r}(n)q^n& = \frac{(q^r;q^r)_\infty}{(q;q)_\infty}\prod_{i=1}^\infty \left(\frac{1}{1-q^{ri}}-q^{ri}\right)\\ & = \prod_{j\geq 1} \frac{1 - q^{rj} + q^{2rj}}{1-q^j}\\ & = \prod_{j\geq1} \frac{(1+q^{3rj})}{(1-q^{j})(1+q^{rj})}.\end{align*}

   We have \begin{align*}\prod_{j\geq 1} & \frac{(1+q^{3rj})}{(1-q^{j})(1+q^{rj})}\\ & =\prod_{j\geq1} \frac{1}{(1-q^{j})(1+q^{3rj-r})(1+q^{3rj-2r})}\\  & =  
     \prod_{i \not\equiv 0 \text{ mod } r} \frac{1}{1-q^{i}} \prod_{j\geq1} \frac{1}{(1-q^{3rj})(1-q^{2(3rj-r)})(1-q^{2(3rj-2r)})}
   \\ & =\sum_{n\geq 0}\textrm{pem}_{\pm r, 3r}(n)q^n.
     \end{align*}
\end{proof}

\begin{proof}[Combinatorial Proof:] 
Let $\mu \in \mathcal{PND}_{0,r}(n)$. Write $\mu = (\mu^{r\mid},\mu^{r\nmid})$. Note that the parts of $\mu^{r \mid} $ all have multiplicity greater than 1.
Let $\lambda$ be the partitions obtained from $\mu$ by taking each part in $\mu^{r\mid}$ with odd multiplicity, removing three of those parts and merging them into a single part to   create a part congruent to $0$ modulo $3r$.   Then $\lambda \in \mathcal{PEM}_{\pm r, 3r}(n).$

For the inverse, start with $\lambda \in \mathcal{PEM}_{\pm r, 3r}(n).$ Write $\lambda = (\lambda^{r\mid},\lambda^{r\nmid})$.  Let $\mu $ be the partition obtained from $\lambda$
by taking each part in $\lambda^{r\mid}$ with odd multiplicity (such a part must be congruent to $0$ modulo $3r$), removing one copy and splitting  it into three equal parts.   Thus $\mu \in \mathcal{PND}_{0,r}(n)$.
\end{proof}

\begin{example}
    Let $\mu = (21^2, 20, 18^5, 11^2, 9^4, 7, 6^3, 3^6)\in \mathcal{PND}_{0,3}(253)$. Then $\mu^{3 \nmid} = (20, 11^2, 7)$ and  $\mu^{3 \mid} = (21^2, 18^5, 9^4, 6^3, 3^6)$.
For any part of $\mu^{3 \mid}$ with odd multiplicity, we remove three parts and merge them into a single part. So, $18^5$ becomes $54, 18^2$ and $6^3$ becomes $18$.
Then $\lambda = (54, 21^2, 20, 18^3, 11^2, 9^4, 7, 3^6) \in \mathcal{PEM}_{\pm 3, 9}(253).$

     Conversely, if $\lambda = (54, 21^2, 20, 18^3, 11^2, 9^4, 7, 3^6) \in \mathcal{PEM}_{\pm 3, 9}(253)$.  Then $\lambda^{3\nmid}=(20, 11^2, 7)$ and $\lambda^{3 \mid} = (54, 21^2, 18^3, 9^4, 3^6)$. For any part of $\lambda^{3\mid}$ with odd multiplicity, remove one copy and split it into three equal parts. So, $54$ becomes $18^3$ and $18$ becomes $6^3$. Then $\mu=(21^2, 20, 18^5, 11^2, 9^4, 7, 6^3, 3^6)\in \mathcal{PND}_{0,3}(253)$.
\end{example}

Recall that   $\mathcal{P}_{\overline{\pm r}, 6r}(n)$ is the set of partitions of $n$ with no parts congruent to $\pm r$ modulo $6r$ and  $\textrm{p}_{\overline{\pm r}, 6r}(n): = |\mathcal{P}_{\overline{\pm r},6r}(n)|.$ The next result follows from Theorem \ref{Th_prnd}.

\begin{corollary}\label{pnd-p}
    For $n \geq 0$ and $r\geq 2$, we have   $\textrm{pnd}_{0,r}(n)=p_{\overline{\pm r}, 6r}(n)$. 
\end{corollary}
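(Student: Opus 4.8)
\textbf{Proof proposal for Corollary \ref{pnd-p}.} The plan is to combine Theorem \ref{Th_prnd}, which gives $\textrm{pnd}_{0,r}(n) = \textrm{pem}_{\pm r, 3r}(n)$, with a standard generating-function manipulation (Glaisher-type / Euler-type) that rewrites the product for $\textrm{pem}_{\pm r, 3r}(n)$ as the product for $p_{\overline{\pm r}, 6r}(n)$. Concretely, from the combinatorial proof of Theorem \ref{Th_prnd} one reads off
\[
\sum_{n\geq 0}\textrm{pem}_{\pm r,3r}(n)q^n = \prod_{i\not\equiv 0 \bmod r}\frac{1}{1-q^i}\ \prod_{j\geq 1}\frac{1}{(1-q^{3rj})(1-q^{2(3rj-r)})(1-q^{2(3rj-2r)})},
\]
and I would simply observe that the last factor is the generating function for partitions into parts that are either divisible by $3r$, or congruent to $2r$ modulo $6r$, or congruent to $4r$ modulo $6r$; applying Euler's identity \eqref{Euler} (with $q$ replaced by an appropriate power) to the parts $\equiv 2r, 4r \pmod{6r}$ converts $1/(1-q^{2(3rj-r)})(1-q^{2(3rj-2r)})$ back into $(-q^{3rj-2r};q^{3r})_\infty(-q^{3rj-r};q^{3r})_\infty$-type factors, recombining with the $1-q^{3rj}$ factor and the $i\not\equiv 0$ factor to yield $\prod_{i\not\equiv \pm r \bmod 6r}\frac{1}{1-q^i}$, which is exactly $\sum_{n\geq 0} p_{\overline{\pm r},6r}(n)q^n$.

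Alternatively, and more cleanly, I would work directly from the intermediate expression already derived in the analytic proof of Theorem \ref{Th_prnd}, namely
\[
\sum_{n\geq 0}\textrm{pnd}_{0,r}(n)q^n = \prod_{j\geq 1}\frac{1+q^{3rj}}{(1-q^j)(1+q^{rj})}.
\]
Now $\prod_{j\geq 1}(1+q^{3rj}) = \prod_{j\geq 1}(1-q^{6rj})/(1-q^{3rj})$ and $\prod_{j\geq 1}(1+q^{rj}) = \prod_{j\geq 1}(1-q^{2rj})/(1-q^{rj})$, so
\[
\prod_{j\geq 1}\frac{1+q^{3rj}}{1+q^{rj}} = \prod_{j\geq 1}\frac{(1-q^{6rj})(1-q^{rj})}{(1-q^{3rj})(1-q^{2rj})}.
\]
The denominator product $\prod(1-q^{3rj})(1-q^{2rj})$ cancels against part of $\prod(1-q^{rj})$, and one checks that the surviving exponents in the numerator/denominator, together with $1/(q;q)_\infty = \prod 1/(1-q^j)$, leave precisely $\prod_{i\not\equiv \pm r \bmod 6r} 1/(1-q^i)$: indeed the multiples of $r$ among the $q^{rj}$ that are \emph{not} cancelled are exactly those $rj$ with $j\not\equiv \pm 1\pmod 6$, i.e. the positive integers $\equiv \pm r \pmod{6r}$ get deleted. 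Since $\prod_{i\not\equiv\pm r\bmod 6r}1/(1-q^i) = \sum_{n\geq 0}p_{\overline{\pm r},6r}(n)q^n$, this proves the corollary. A combinatorial proof is also available: compose the bijection of Theorem \ref{Th_prnd} with a Glaisher-type bijection on $\mathcal{PEM}_{\pm r,3r}$ that merges each doubled part $\equiv \pm r \pmod{3r}$ into a part $\equiv \pm 2r \pmod{6r}$ or $\equiv 0 \pmod{6r}$, i.e. removes parts $\equiv \pm r \pmod{6r}$ altogether.

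The main obstacle is purely bookkeeping: tracking exactly which residue classes modulo $6r$ survive after all the telescoping cancellations among the factors $(1-q^{rj}), (1-q^{2rj}), (1-q^{3rj}), (1-q^{6rj})$. I would handle this by partitioning the index $j$ (in $\prod_j 1/(1-q^{rj})$) according to its residue modulo $6$ and verifying that the classes $j\equiv 1$ and $j\equiv 5 \pmod 6$ — equivalently $rj \equiv \pm r \pmod{6r}$ — are the only ones removed, with all other classes appearing with net exponent $1$ in the denominator, matching $p_{\overline{\pm r},6r}(n)$ exactly. Everything else is a one-line invocation of Theorem \ref{Th_prnd} and Euler's identity \eqref{Euler}.
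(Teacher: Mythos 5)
Your proposal is correct, but your primary argument is analytic where the paper's is a one-line bijection: the paper simply observes that merging each pair of equal parts congruent to $\pm r$ modulo $3r$ into a single part is a bijection from $\mathcal{PEM}_{\pm r,3r}(n)$ to $\mathcal{P}_{\overline{\pm r},6r}(n)$, and then invokes Theorem \ref{Th_prnd}; your closing combinatorial sketch is essentially this same map, with the minor slip that the merged parts are always congruent to $\pm 2r$ modulo $6r$ and never to $0$ modulo $6r$ (a pair of equal parts $a\equiv \pm r \pmod{3r}$ merges to $2a\equiv \pm 2r \pmod{6r}$), which does not affect validity. Your second, generating-function route is sound: writing $(q^r;q^r)_\infty$, $(q^{2r};q^{2r})_\infty$, $(q^{3r};q^{3r})_\infty$, $(q^{6r};q^{6r})_\infty$ in terms of $(q^{kr};q^{6r})_\infty$ confirms
\[
\frac{(q^{6r};q^{6r})_\infty\,(q^{r};q^{r})_\infty}{(q^{3r};q^{3r})_\infty\,(q^{2r};q^{2r})_\infty}=(q^{r};q^{6r})_\infty\,(q^{5r};q^{6r})_\infty,
\]
so that only the classes $rj$ with $j\equiv\pm1\pmod 6$ are deleted from $1/(q;q)_\infty$, exactly as you claim. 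Your first sketch is more convoluted than necessary and its ``Euler's identity'' step is loosely worded; note that the final product in the paper's analytic proof of Theorem \ref{Th_prnd} can be read off directly as the generating function of $\mathcal{P}_{\overline{\pm r},6r}(n)$, since the exponents $3rj$, $6rj-2r$, $6rj-4r$ together with the parts not divisible by $r$ run over precisely the parts not congruent to $\pm r$ modulo $6r$. In short: the paper's proof buys a bijective explanation in one sentence, while your analytic bookkeeping gives an independent verification at the level of infinite products; both establish the corollary.
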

\begin{proof} The transformation that merges each pair of equal parts congruent to $\pm r$ modulo $3r$ into a single part is a bijection from $\mathcal{PEM}_{\pm r, 3r}(n)$ to $\mathcal{P}_{\overline{\pm r}, 6r}(n)$.
    \end{proof}

\begin{remark} We may view Corollary \ref{pnd-p} as an antipode of Schur's theorem. Recall that Schur's theorem states that the number of partitions of $n$ into parts congruent to $\pm 1$ modulo $6$ equals the number of distinct partitions of $n$ into parts not congruent to $\pm 1$ modulo $3$.  If we multiply each part by $r$ we obtain an identity that states that the number of partitions of $n$ into parts congruent to $\pm r$ modulo $6r$ equals the number of distinct partitions of $n$ into parts not congruent to $\pm r$ modulo $3r$. 
    \end{remark}

    \begin{corollary} \label{cpend} For $n\geq 0$, we have $\textrm{pend}(n)= \pd_{\pm 1,6}(n)$.
    \end{corollary}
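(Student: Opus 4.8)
The plan is to recognize Corollary \ref{cpend} as the special case $r = 2$ of Corollary \ref{pnd-p}, combined with the $r = 6$, $t = 1$ case of Theorem \ref{thirdpod}. First I would observe that $\mathcal{PEND}(n)$ is by definition exactly $\mathcal{PND}_{0,2}(n)$: a partition has even parts not distinct precisely when parts congruent to $0$ modulo $2$ are not distinct. Hence $\pend(n) = \textrm{pnd}_{0,2}(n)$.

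Next I would apply Corollary \ref{pnd-p} with $r = 2$, which gives $\textrm{pnd}_{0,2}(n) = p_{\overline{\pm 2}, 12}(n)$, the number of partitions of $n$ with no parts congruent to $\pm 2$ modulo $12$. Then I would apply Theorem \ref{thirdpod} with $r = 6$ and $t = 1$ (noting $0 < 1 < 6/2 = 3$, so the hypothesis is satisfied), which gives $\pd_{\pm 1, 6}(n) = p_{\overline{\pm 2}, 12}(n)$. Chaining these equalities yields $\pend(n) = \textrm{pnd}_{0,2}(n) = p_{\overline{\pm 2}, 12}(n) = \pd_{\pm 1, 6}(n)$, which is the claim.

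There is really no obstacle here — the result is a bookkeeping corollary assembling two previously established identities, and the only thing to check carefully is that the residue classes match up: $\pm 2 \pmod{12}$ appearing in the $r = 2$ instance of Corollary \ref{pnd-p} (where $6r = 12$ and the forbidden residues are $\pm r = \pm 2$) coincides with the $\pm 2t = \pm 2 \pmod{2r = 12}$ appearing in the $r = 6$, $t = 1$ instance of Theorem \ref{thirdpod}. Since both describe $\mathcal{P}_{\overline{\pm 2}, 12}(n)$, the identification is immediate. If one prefers a self-contained combinatorial statement, one could alternatively compose the bijection of Theorem \ref{Th_prnd} (with $r = 2$), the bijection of Corollary \ref{pnd-p}, and the inverse of the Theorem \ref{thirdpod} bijection, but invoking the stated results directly is cleanest.
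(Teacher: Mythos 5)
Your proposal is correct and follows essentially the same route as the paper: set $r=2$ in Corollary \ref{pnd-p} to get $\textrm{pend}(n)=p_{\overline{\pm 2},12}(n)$, then invoke Theorem \ref{thirdpod} to identify this with $\pd_{\pm 1,6}(n)$. Your choice of parameters $r=6$, $t=1$ in Theorem \ref{thirdpod} is the right one (the paper's proof misprints this as ``$r=2$''), so your version is, if anything, slightly more carefully stated.
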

    \begin{proof} Setting $r=2$ in Corollary \ref{pnd-p}, we obtain $\textrm{pend}(n)=p_{\overline{\pm 2}, 12}(n)$. Theorem \ref{thirdpod} with $r=2$ gives $p_{\overline{\pm 2}, 12}(n)=\pd_{\pm 1,6}(n)$.
        \end{proof}

The next result is of the same flavor as Theorem  \ref{Th_prnd}. Its  analytic and combinatorial proofs are both very similar to the proofs of Theorem  \ref{Th_prnd} and we omit them.

\begin{theorem}\label{pond}
For $n \geq 0$ and $r\geq 2$, we have  $\textrm{pnd}_{r, 2r}(n) = \textrm{pem}_{\pm r, 6r}(n)$.
\end{theorem}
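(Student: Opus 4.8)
The plan is to mirror the structure of the proof of Theorem \ref{Th_prnd}, both analytically and combinatorially, with the role of ``parts congruent to $0$ modulo $r$'' replaced by ``parts congruent to $r$ modulo $2r$,'' i.e., odd multiples of $r$.

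\medskip

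\noindent\textbf{Analytic approach.} First I would write the generating function for $\textrm{pnd}_{r,2r}(n)$. Parts congruent to $r$ modulo $2r$ (these are exactly the parts $rj$ with $j$ odd) must occur with multiplicity $0$ or $\geq 2$, contributing a factor $\bigl(\tfrac{1}{1-q^{r(2i-1)}}-q^{r(2i-1)}\bigr)$ for each $i\geq 1$, while all other parts are unrestricted. Writing the unrestricted product as $1/(q;q)_\infty$ and pulling out the factor $(q^r;q^{2r})_\infty$ to renormalize, one gets
\begin{align*}
\sum_{n\geq 0}\textrm{pnd}_{r,2r}(n)q^n &= \frac{(q^r;q^{2r})_\infty}{(q;q)_\infty}\prod_{i\geq 1}\left(\frac{1}{1-q^{r(2i-1)}}-q^{r(2i-1)}\right)\\
&= \frac{1}{(q;q)_\infty}\prod_{i\geq 1}\bigl(1-q^{r(2i-1)}+q^{2r(2i-1)}\bigr)\\
&= \frac{1}{(q;q)_\infty}\prod_{i\geq 1}\frac{1+q^{3r(2i-1)}}{1+q^{r(2i-1)}}.
\end{align*}
Then I would use $1+x = (1-x^2)/(1-x)$ twice, together with the factorization $1+q^{3r(2i-1)}=(1-q^{r(2i-1)})^{-1}(1-q^{r(2i-1)})(1+q^{3r(2i-1)})$-style manipulations, to rewrite the right-hand side as the generating function for $\textrm{pem}_{\pm r,6r}(n)$: parts congruent to $\pm r \pmod{6r}$ (which among multiples of $r$ are those $rm$ with $m\equiv \pm 1\pmod 6$) must have even multiplicity, all others unrestricted. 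The key bookkeeping is that the odd multiples of $r$ split as those $\equiv r$ or $\equiv 5r\pmod{6r}$ (the ``$\pm r$'' classes) and those $\equiv 3r\pmod{6r}$; the first two should acquire squared denominators $1/(1-q^{2r(6j-1)})(1-q^{2r(6j-5)})$ and the third stays $1/(1-q^{3r(2j-1)})$. This is the same computation as in Theorem \ref{Th_prnd} after multiplying the relevant exponents through.

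\medskip

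\noindent\textbf{Combinatorial approach.} Given $\mu\in\mathcal{PND}_{r,2r}(n)$, split off the parts congruent to $r\pmod{2r}$; each such part has multiplicity $\geq 2$. For each such part with \emph{odd} multiplicity, remove three copies and merge them into a single part congruent to $3r\pmod{6r}$ (note $3\cdot r(2i-1)$ is an odd multiple of $3r$), so that the remaining copies of that part have even multiplicity. The new merged parts are congruent to $3r\pmod{6r}$, hence lie outside the $\pm r\pmod{6r}$ classes and their multiplicity is unrestricted; the untouched small parts and the now-even-multiplicity parts congruent to $\pm r\pmod{6r}$ give an element of $\mathcal{PEM}_{\pm r,6r}(n)$. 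The inverse takes a part with odd multiplicity congruent to $3r\pmod{6r}$ (the only parts in a $\mathcal{PEM}_{\pm r,6r}$-partition among multiples of $r$ that can have odd multiplicity, once parts $\equiv\pm r$ are forced even), removes one copy, and splits it into three equal parts each congruent to $r\pmod{2r}$. One checks these are mutually inverse, exactly as in Theorem \ref{Th_prnd}.

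\medskip

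\noindent\textbf{Main obstacle.} The one point requiring care — and the reason the authors say the proofs are ``very similar'' rather than identical — is the residue arithmetic: confirming that an odd multiple of $r$ times $3$ is an odd multiple of $3r$ (i.e.\ $\equiv 3r\pmod{6r}$), that the classes $\pm r\pmod{6r}$ are disjoint from $3r\pmod{6r}$, and that every multiple of $r$ with odd multiplicity in a $\mathcal{PEM}_{\pm r,6r}$-partition is forced into the $3r\pmod{6r}$ class so the inverse map is well-defined. Once this modular bookkeeping is pinned down, both proofs transcribe verbatim from those of Theorem \ref{Th_prnd}.
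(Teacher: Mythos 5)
Your proposal is correct and is exactly the adaptation the paper has in mind: the paper omits the proof of this theorem precisely because it transcribes the analytic and combinatorial arguments of Theorem \ref{Th_prnd} with parts congruent to $0$ modulo $r$ replaced by parts congruent to $r$ modulo $2r$, which is what you do, and your residue bookkeeping (triples of parts $\equiv r \pmod{2r}$ merge into parts $\equiv 3r \pmod{6r}$, disjoint from the $\pm r \pmod{6r}$ classes) is right. One tiny wording slip: parts $\equiv 3r \pmod{6r}$ are not the only multiples of $r$ that may have odd multiplicity in a $\mathcal{PEM}_{\pm r,6r}$-partition (even multiples of $r$ are unrestricted); the statement you need, and in fact use, is that among parts $\equiv r \pmod{2r}$ only those $\equiv 3r \pmod{6r}$ can have odd multiplicity, so the inverse map is well defined.
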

We single out the case $r=1$ of Theorem \ref{pond}.
\begin{corollary} \label{cpond} For $n\geq 0$, we have $\textrm{pond}(n)= \textrm{pem}_{\pm 1, 6}(n)$.
    \end{corollary}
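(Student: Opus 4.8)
\textbf{Proof proposal for Corollary \ref{cpond}.} The plan is to obtain this as the specialization $r=1$ of Theorem \ref{pond}, together with a direct check that the relevant sets make sense for $r=1$. Theorem \ref{pond} states $\textrm{pnd}_{r,2r}(n)=\textrm{pem}_{\pm r,6r}(n)$; setting $r=1$ gives $\textrm{pnd}_{1,2}(n)=\textrm{pem}_{\pm 1,6}(n)$. First I would observe that $\mathcal{PND}_{1,2}(n)$ is exactly the set of partitions of $n$ in which parts congruent to $1$ modulo $2$ — i.e., odd parts — are not distinct, which is by definition $\mathcal{POND}(n)$; hence $\textrm{pnd}_{1,2}(n)=\textrm{pond}(n)$ and the corollary follows immediately.

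Since Theorem \ref{pond} is stated for $r\geq 2$, a cleaner route (and the one I would actually write) is to run the analytic proof of Theorem \ref{pond} directly with $r=1$. Following the template of the proof of Theorem \ref{Th_prnd}, the generating function for $\textrm{pond}(n)$ is
\begin{align*}
\sum_{n\geq 0}\textrm{pond}(n)q^n
&=\frac{(q;q^2)_\infty^{-1}}{\text{(correction on odd parts)}}
=\frac{1}{(q^2;q^2)_\infty}\prod_{i\geq 1}\left(\frac{1}{1-q^{2i-1}}-q^{2i-1}\right)\\
&=\prod_{j\geq 1}\frac{1-q^{2j-1}+q^{2(2j-1)}}{1-q^{j}}
=\prod_{j\geq 1}\frac{1+q^{3(2j-1)}}{(1-q^{j})(1+q^{2j-1})},
\end{align*}
where the first step isolates even parts (unrestricted, giving $1/(q^2;q^2)_\infty$) from odd parts, and for each odd part the factor $\frac{1}{1-q^{2i-1}}-q^{2i-1}$ counts that part appearing $0$ or $\geq 2$ times. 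Then, exactly as in the proof of Theorem \ref{Th_prnd}, factor $1+q^{3(2j-1)}=(1+q^{6j-3})$ against $1+q^{2j-1}$ by splitting the residues: one rewrites the product as
\begin{align*}
\prod_{j\geq 1}\frac{1+q^{6j-3}}{(1-q^{j})(1+q^{2j-1})}
&=\prod_{i\not\equiv 1\ (\mathrm{mod}\ 2)}\frac{1}{1-q^{i}}\ \prod_{j\geq 1}\frac{1}{(1-q^{2j})}\cdot\frac{1}{(1-q^{2(6j-3)})(1-q^{2(6j-5)})}\\
&=\sum_{n\geq 0}\textrm{pem}_{\pm 1,6}(n)q^n,
\end{align*}
the last product being the generating function for partitions in which parts $\equiv\pm 1\pmod 6$ occur with even multiplicity and all others are unrestricted. (Here I am trimming; the residue bookkeeping is the same as in Theorem \ref{Th_prnd} with $r$ replaced by $1$ in the appropriate places.)

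Alternatively, a combinatorial proof mirrors the one for Theorem \ref{Th_prnd}: given $\mu\in\mathcal{POND}(n)$, every odd part of $\mu$ has multiplicity $\neq 1$; for each odd part with odd multiplicity ($\geq 3$), remove three copies and merge them into a single part $\equiv\pm 1\pmod 6$ (an odd multiple of $3$ that is $\not\equiv 0\pmod 3$ when the original part was $\not\equiv 0\pmod 3$, and $\equiv 3\pmod 6$ otherwise — one checks these land in the required residue classes), producing a partition in $\mathcal{PEM}_{\pm 1,6}(n)$; the inverse takes each part $\equiv\pm 1\pmod 6$ with odd multiplicity, removes one copy, and splits it into three equal odd parts. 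The main obstacle — and it is a minor one — is the residue-class bookkeeping: verifying that merging three equal odd parts always produces a part in the union of residue classes $\pm 1\pmod 6$, and conversely that a part with odd multiplicity in a $\mathcal{PEM}_{\pm 1,6}$ partition, once divided by $3$, is genuinely odd so that the map is a well-defined involution between the intended sets. This is exactly the $r=1$ shadow of the check already carried out for Theorem \ref{Th_prnd}, so I would simply point to that proof and note the specialization rather than reprove it in full.
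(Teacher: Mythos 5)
Your first paragraph is exactly the paper's route: the paper derives the corollary precisely by observing that $\mathcal{PND}_{1,2}(n)=\mathcal{POND}(n)$ and taking the case $r=1$ of Theorem \ref{pond}, and your instinct to actually verify that the proof of Theorem \ref{pond} survives at $r=1$ (since the theorem is stated for $r\geq 2$) is sound. The problem is that the details you supply for that verification are wrong as written. In the analytic computation, the first displayed chain is fine up to $\prod_{j\geq 1}\frac{1+q^{6j-3}}{(1-q^{j})(1+q^{2j-1})}$, but the next step is not: since the odd numbers divisible by $3$ are exactly the $6j-3=3(2j-1)$, the cancellation gives
\begin{align*}
\prod_{j\geq 1}\frac{1+q^{6j-3}}{(1-q^{j})(1+q^{2j-1})}
&=\prod_{j\geq 1}\frac{1}{(1-q^{j})(1+q^{6j-1})(1+q^{6j-5})}\\
&=\prod_{\substack{i\geq 1\\ i\not\equiv \pm 1\ (\mathrm{mod}\ 6)}}\frac{1}{1-q^{i}}\ \prod_{j\geq 1}\frac{1}{\bigl(1-q^{2(6j-1)}\bigr)\bigl(1-q^{2(6j-5)}\bigr)},
\end{align*}
which is the generating function for $\mathcal{PEM}_{\pm 1,6}(n)$. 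Your displayed right-hand side instead has the unrestricted factor taken over all even $i$ \emph{and} a second copy $\prod_j (1-q^{2j})^{-1}$ of the same even-parts factor, and it doubles the residue class $6j-3\equiv 3\pmod 6$ rather than $6j-1\equiv -1\pmod 6$; as an identity of $q$-series it is false, and it is not the $\textrm{pem}_{\pm1,6}$ generating function.

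The combinatorial sketch has the same residue error, and there it breaks the map. Merging three copies of an odd part $m$ produces $3m$, which is always an odd multiple of $3$, hence always $\equiv 3\pmod 6$ and never $\equiv\pm 1\pmod 6$; your parenthetical claim that the merged part is $\equiv\pm1\pmod 6$ (and ``$\not\equiv 0\pmod 3$ when the original part was $\not\equiv 0\pmod 3$'') cannot hold. The correct bookkeeping is: parts $\equiv\pm1\pmod 6$ are only ever \emph{removed} in triples (taking their multiplicity from odd $\geq 3$ to even), while all merged parts land in the unrestricted class $3\pmod 6$, so the image lies in $\mathcal{PEM}_{\pm1,6}(n)$. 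Consequently the inverse must act on parts $\equiv 3\pmod 6$ with odd multiplicity, removing one copy and splitting it into three equal odd parts. The inverse you state --- splitting parts $\equiv\pm1\pmod 6$ of odd multiplicity --- is doubly impossible: such parts are not divisible by $3$, and in a partition of $\mathcal{PEM}_{\pm1,6}(n)$ they have even multiplicity by definition. So while your overall strategy coincides with the paper's, the verification you wrote down would not compile into a proof without these corrections.
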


\section{Beck-type identities}\label{s_6}

In 2017,  Beck \cite[A090867]{OEIS}  conjectured a companion identity to Euler's theorem. It states that the excess in the number of parts in all odd partitions of $n$ over the number of parts in all distinct partitions of $n$ equals the number of partitions of $n$ in which there is exactly one repeated part, and also equals the number of partitions of $n$ where there is exactly one (possibly repeated) even part. Andrews proved this conjecture \cite{A-Beck} and since then similar companion identities, referred to as  Beck-type identities, have been studied for various partition identities. 

In general, if a partition identity states that the number of partitions of $n$ with condition $X$ equals the number of partitions of $n$ with condition $Y$, then a  Beck-type companion identity states that the excess in the number of parts between the two sets of partitions of $n$ equals (a constant multiple of) the number of partitions of $n$ with a slight relaxation on $X$ (or $Y$). 

Beck-type companions for identities involving  POD  (respectively PED) partitions were proved in \cite{BW23}, so a natural question is whether or not there are Beck-type companions for identities involving the generalizations of POD and PED partitions introduced in this article.

We begin with a Beck-type companion for the identity $\pd_{0,r}(n)=b_{2r}(n)$. 
\begin{theorem} \label{B1} Let $n\geq 0$ and $r\geq 2$. The excess in the number of parts in all partitions in $\mathcal B_{2r}(n)$ over the number of parts in all partitions in $\PD_{0,r}(n)$ equals the number of partitions of $n$  with one part (possibly repeated) divisible by $2r$ and all other parts not divisible by $2r$. It is also equal to the number of partitions of $n$ with one part divisible by $r$ repeated and all other parts divisible by $r$ distinct. \end{theorem}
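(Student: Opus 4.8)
\textbf{Proof plan for Theorem \ref{B1}.}

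The plan is to establish the two ``Beck-type'' equalities by tracking the number of parts through generating functions, following the standard Andrews-style differentiation approach. Introduce a parameter $z$ marking the number of parts and set
\[
F_{\PD}(z,q)=\sum_{\lambda\in\PD_{0,r}}z^{\ell(\lambda)}q^{|\lambda|}=\frac{(-zq^r;q^r)_\infty}{(zq;q)_\infty/(-zq^r;q^r)_\infty}\cdot\frac{1}{(-zq^r;q^r)_\infty}\cdots
\]
more precisely $F_{\PD}(z,q)=\dfrac{(-zq^r;q^r)_\infty}{(zq;q)_\infty}$, since parts congruent to $0\bmod r$ contribute $(-zq^r;q^r)_\infty$ and the remaining parts contribute $1/\big((zq;q)_\infty/(zq^r;q^r)_\infty\big)$; combining gives $F_{\PD}(z,q)=(-zq^r;q^r)_\infty(zq^r;q^r)_\infty/(zq;q)_\infty=(z^2q^{2r};q^{2r})_\infty/(zq;q)_\infty$. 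Similarly $F_{\mathcal B_{2r}}(z,q)=\dfrac{(zq^{2r};q^{2r})_\infty}{(zq;q)_\infty}$ truncated away from multiples of $2r$, i.e. $F_{\mathcal B_{2r}}(z,q)=\dfrac{1}{(zq;q)_\infty}\cdot(zq^{2r};q^{2r})_\infty$. Wait — to avoid an error I would write both carefully as products over the allowed parts with $z$ marking each part, then form the mixed partial: the excess statistic is
\[
D(q):=\left.\frac{\partial}{\partial z}\Big(F_{\mathcal B_{2r}}(z,q)-F_{\PD}(z,q)\Big)\right|_{z=1}.
\]
First I would compute $D(q)$ by logarithmic differentiation: $\frac{\partial}{\partial z}\log F(z,q)\big|_{z=1}$ times $F(1,q)$, using that $F_{\mathcal B_{2r}}(1,q)=F_{\PD}(1,q)=\sum b_{2r}(n)q^n$ by Corollary \ref{Th_prd}. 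The logarithmic derivatives turn into sums $\sum_{j}\frac{q^{j}}{1-q^{j}}$ over the respective allowed part-sizes, so $D(q)=\big(\sum b_{2r}(n)q^n\big)\cdot\Big(\sum_{j\geq1}\frac{q^{j}}{1-q^{j}}-\sum_{r\nmid j}\frac{q^{j}}{1-q^{j}}-\sum_{j\geq1}\frac{q^{rj}}{1-q^{2rj}}\Big)$, where the three sums come respectively from $\mathcal B_{2r}$ (all $j$ not divisible by $2r$), and from $\PD_{0,r}$ (parts $r\nmid j$ unrestricted contributing $\frac{q^j}{1-q^j}$, parts $r\mid j$ distinct contributing $\frac{q^{rj}}{1+q^{rj}}=\frac{q^{rj}-q^{2rj}}{1-q^{2rj}}$, so the $z$-derivative piece is $\frac{q^{rj}}{1-q^{2rj}}$... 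I will recompute this bracket cleanly). After simplification the bracket should collapse to a single clean sum, namely $\sum_{j\geq1}\frac{q^{2rj}}{1-q^{2rj}}$ or $\sum_{r\mid j}\frac{q^{j}}{1-q^{2j}}$, matching one of the two target objects.

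Next I would identify the two combinatorial families on the right-hand side with generating functions $\big(\sum b_{2r}(n)q^n\big)\cdot S(q)$ for the appropriate $S(q)$: a partition with exactly one (possibly repeated) part divisible by $2r$ and all other parts not divisible by $2r$ has generating function $\frac{1}{(q;q)_\infty/\text{(mult. of }2r)}\cdot\sum_{j\geq1}\frac{q^{2rj}}{1-q^{2rj}}=\frac{(q^{2r};q^{2r})_\infty}{(q;q)_\infty}\sum_{j\geq1}\frac{q^{2rj}}{1-q^{2rj}}$; since $\frac{(q^{2r};q^{2r})_\infty}{(q;q)_\infty}=\sum b_{2r}(n)q^n$ this is exactly $D(q)$. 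For the second family — one part divisible by $r$ repeated, all other parts divisible by $r$ distinct — dividing every part by $r$ gives a partition with exactly one repeated part (the rest distinct), whose generating function is classically $(-q;q)_\infty\sum_{j\geq1}\frac{q^{2j}}{1-q^{2j}}$ (Andrews' original Beck computation), and substituting $q\mapsto q^r$ yields $(-q^r;q^r)_\infty\sum_{j\geq1}\frac{q^{2rj}}{1-q^{2rj}}$; since $(-q^r;q^r)_\infty=\frac{(q^{2r};q^{2r})_\infty}{(q^r;q^r)_\infty}$ and $\frac{(q^{2r};q^{2r})_\infty}{(q;q)_\infty}=(-q^r;q^r)_\infty\cdot\frac{(q^r;q^r)_\infty}{(q;q)_\infty}$... here I would insert Euler's identity $\frac{(q^r;q^r)_\infty}{(q;q)_\infty}=\frac{1}{\text{?}}$ — actually the relevant step is that $\frac{(q^{2r};q^{2r})_\infty}{(q;q)_\infty}$ already equals $\sum b_{2r}(n) q^n$, so I must show $(-q^r;q^r)_\infty\sum\frac{q^{2rj}}{1-q^{2rj}}$ equals $\frac{(q^{2r};q^{2r})_\infty}{(q;q)_\infty}\sum\frac{q^{2rj}}{1-q^{2rj}}$, i.e. $(-q^r;q^r)_\infty=\frac{(q^{2r};q^{2r})_\infty}{(q;q)_\infty}$, which is false in general — so the second family must instead be counted directly: its gf is $\big(\prod_{j\geq1}(1+q^{rj})\big)\cdot\big(\text{unrestricted non-multiples of }r\big)^{-1}$? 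No: \emph{all} non-multiples of $r$ are forbidden here. So the gf is purely $(-q^r;q^r)_\infty\sum_{j}\frac{q^{2rj}}{1-q^{2rj}}\cdot\frac{1}{(-q^r;q^r)_\infty}$-type bookkeeping; I will recompute, but the cleanest route is a \emph{bijective} one: map such a partition $\tau$ (one repeated multiple of $r$, rest distinct multiples of $r$) to a pair (the repeated value, the distinct-partition remainder) and match it against the first family via the bijection of Theorem \ref{1st_gen}/Glaisher.

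Given the subtlety above, the approach I actually favor is: (i) prove $D(q)=\frac{(q^{2r};q^{2r})_\infty}{(q;q)_\infty}\sum_{j\geq1}\frac{q^{2rj}}{1-q^{2rj}}$ by the differentiation computation, which immediately gives the \emph{first} Beck identity (partitions with one possibly-repeated part divisible by $2r$, rest not divisible by $2r$); (ii) then prove the \emph{second} description equals the first combinatorially, by a direct bijection built on Glaisher's map: given a partition counted by the second family, apply $\varphi_G^{-1}$ after dividing the distinct multiples-of-$r$ by $r$ to absorb them, tracking the one repeated part separately, exactly mirroring the bijection in the combinatorial proof of Theorem \ref{1st_gen}, Case 2. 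The main obstacle I anticipate is (ii): getting the bijection to respect the ``exactly one repeated part / exactly one special part'' condition cleanly — in Andrews-type arguments this bookkeeping is where sign errors and off-by-one multiplicities creep in, and I would need to be careful that applying Glaisher's map to the distinct part of the partition does not accidentally create or destroy the distinguished repeated/special part. Everything else (the generating-function manipulations, the $q\mapsto q^r$ substitutions, invoking Corollary \ref{Th_prd} and Euler's identity \eqref{Euler}) is routine.
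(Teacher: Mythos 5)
Your step (i) (the $z$-differentiation of the two-variable generating functions) is sound and, once the Lambert-series bracket is actually simplified, does give
$D(q)=\frac{(q^{2r};q^{2r})_\infty}{(q;q)_\infty}\sum_{j\geq 1}\frac{q^{2rj}}{1-q^{2rj}}$,
which is the generating function of the first family; note, though, that you leave this collapse unfinished and hedge between two candidates, $\sum_{j\geq1}\frac{q^{2rj}}{1-q^{2rj}}$ and $\sum_{j\geq1}\frac{q^{rj}}{1-q^{2rj}}$, which are not equal — only the first is correct, and getting it requires the identity $\sum_{j\ \mathrm{odd}}\frac{2x^{j}}{1-x^{j}}-\sum_{k\geq1}\frac{x^{k}}{1+x^{k}}=\sum_{k\geq1}\frac{x^{k}}{1-x^{k}}$ with $x=q^{2r}$, which you would have to prove or cite.

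The genuine gap is in step (ii): you have misread the second family. In the statement, ``one part divisible by $r$ repeated and all other parts divisible by $r$ distinct'' means that \emph{among the parts divisible by $r$} exactly one value repeats and the rest are distinct, while parts not divisible by $r$ are unrestricted (this is forced by the paper's conventions and is what the intended argument produces: the excess factors as $\sum_{k} b_r(n-rk)\,|\mathcal D_{1,2}(k)|$, with an unrestricted $r$-regular component riding along). You instead read it as ``all parts divisible by $r$,'' and your own computation already exposed the problem: that family has generating function $(-q^r;q^r)_\infty\sum_{j}\frac{q^{2rj}}{1-q^{2rj}}$, which does \emph{not} equal $D(q)$ (e.g.\ it vanishes in degrees not divisible by $r$, while the excess does not). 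Rather than revisiting the reading, you pivot to constructing a bijection between this family and the first one — a bijection that cannot exist, since the two sets are not equinumerous; so step (ii) as planned fails. Under the correct reading the missing factor is exactly $\frac{(q^r;q^r)_\infty}{(q;q)_\infty}$ for the unrestricted non-multiples of $r$, and then $\frac{(q^r;q^r)_\infty}{(q;q)_\infty}\,(-q^r;q^r)_\infty\sum_{j}\frac{q^{2rj}}{1-q^{2rj}}=\frac{(q^{2r};q^{2r})_\infty}{(q;q)_\infty}\sum_{j}\frac{q^{2rj}}{1-q^{2rj}}=D(q)$, so the second identity follows at once from your step (i) with no new bijection needed. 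For contrast, the paper avoids generating functions entirely: it uses the Glaisher-based bijection from Theorem \ref{1st_gen} (case $t=0$), which fixes $\lambda^{r\nmid}$, to reduce the excess to the classical Beck identity applied to $\frac{1}{r}\lambda^{r\mid}$ (odd versus distinct partitions), and the spectator $r$-regular component is precisely what produces the unrestricted non-multiples of $r$ in both descriptions.
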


\begin{proof} In the combinatorial proof of Theorem \ref{1st_gen} (case $t=0$), we showed that  $$(\lambda^{r\mid}, \lambda^{r\nmid})\mapsto (r\varphi_G\big(\frac{1}{r}\lambda^{r\mid}\big), \lambda^{r\nmid})$$ is a bijection from $\mathcal B_{2r}(n)$ to $\PD_{0,r}(n)$. Then, \begin{align}\notag
    \sum_{\lambda\in \mathcal B_{2r}(n)}\ell(\lambda)& - \sum_{\lambda\in \mathcal \PD_{0,r}(n)}\ell(\lambda)\\ \label{beck-pd} & = \sum_{(\lambda^{r\mid}, \lambda^{r\nmid})\in \mathcal B_{2r}(n)}\ell\big(\frac{1}{r}\lambda^{r\mid}\big)- \sum_{(\lambda^{r\mid}, \lambda^{r\nmid})\in \mathcal \PD_{0,r}(n)}\ell\big(\frac{1}{r}\lambda^{r\mid}\big).
\end{align}
Notice that for $(\lambda^{r\mid}, \lambda^{r\nmid})\in \mathcal B_{2r}(n)$, the partition $\frac{1}{r}\lambda^{r\mid}$ has odd parts and for $(\lambda^{r\mid}, \lambda^{r\nmid})\in \PD_{0,r}(n)$, the partition $\frac{1}{r}\lambda^{r\mid}$ has distinct parts.  Moreover, in each case $\lambda^{r\nmid}$ is an $r$-regular partition. 

As stated at the beginning of this section, the original Beck identity (see \cite{A-Beck} for the analytic proof and \cite{BB,Y18} for the combinatorial proof) states that the excess in the number of parts in all odd partitions of $k$ over the number of parts in all distinct partitions of $k$ equals $|\mathcal O_{1,2}(k)|$, the number of partitions of $k$ with one even part and all other parts odd, and also $|\mathcal D_{1,2}(k)|$, the number of partitions of $k$ with one repeated part and all other parts occurring with multiplicity one.

Then,  the excess in  \eqref{beck-pd} equals $$\sum_{k=0}^{\lfloor n/r\rfloor}b_r(n-rk)|\mathcal O_{1,2}(k)|=\sum_{k=0}^{\lfloor n/r\rfloor}b_r(n-rk)|\mathcal D_{1,2}(k)|.$$ 
Since $\mathcal O_{1,2}(k)$ is in bijection with the set of partitions of $rk$ with a single part divisible by $2r$ and all other  parts congruent to $r$ modulo $2r$, and $\mathcal D_{1,2}(k)$ is in bijection with the set of partitions of $rk$ into parts divisible by $r$ with a single part repeated and all other parts distinct, this completes the proof.
\end{proof}

The Beck-type companion to Glaisher's identity \cite{Y18} states that the excess in the number of parts in all $r$-regular  partitions of $k$ over the number of parts in all  partitions of $k$ in which parts have multiplicity less than $r$ equals $(r-1)|\mathcal O_{1,r}(k)|$, $r-1$ times the number of partitions of $k$ with one  part divisible by $r$ and all other parts not divisible by $r$, and also $(r-1)|\mathcal D_{1,r}(k)|$, $r-1$ times the number of partitions of $k$ with one  part repeated at least $r$ times and all other parts occurring with multiplicity less than $r$.
Using the Beck-type companion to Glaisher's identity, we can adapt the proof of Theorem \ref{B1} to give a Beck-type companion to the identity of Theorem \ref{second pd}

\begin{theorem} Let $n\geq 0$ and $r\geq 2$. The excess in the number of parts in all partitions in $\mathcal B_{r^2}(n)$ over the number of parts in all partitions in $\mathcal{PRM}_{0,r}(n)$ equals $r-1$ times the number of partitions of $n$  with one part (possibly repeated) divisible by $r^2$ and all other parts not divisible by $r^2$. It  also equals $r-1$ times the number of partitions of $n$ with one part divisible by $r$ repeated at least $r$ times and all other parts divisible by $r$ with multiplicity less than $r$. 
    \end{theorem}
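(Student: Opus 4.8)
The plan is to mimic the proof of Theorem \ref{B1} exactly, replacing the bijection from Theorem \ref{1st_gen} (case $t=0$) by the bijection from the combinatorial proof of Theorem \ref{second pd}, and replacing the original Beck identity by the Beck-type companion to Glaisher's identity quoted just above. First I would recall that the combinatorial proof of Theorem \ref{second pd} establishes that
\[
(\mu^{r\mid},\mu^{r\nmid})\mapsto \big(\mu^{r\nmid}\cup r\varphi_G^{-1}(\tfrac{1}{r}\mu^{r\mid}),\ \cdot\ \big)
\]
is a bijection from $\mathcal B_{r^2}(n)$ to $\mathcal{PRM}_{0,r}(n)$; more precisely, writing $\lambda\in\mathcal B_{r^2}(n)$ as $(\lambda^{r\mid},\lambda^{r\nmid})$ with $\lambda^{r\mid}=r\nu$, $\nu\in\mathcal B_r$, the map sends $\lambda$ to $\lambda^{r\nmid}\cup r\varphi_G(\nu)$, and its inverse sends $\mu\in\mathcal{PRM}_{0,r}(n)$, $\mu^{r\mid}=r\eta$ with $\eta\in\mathcal D_r$, to $\mu^{r\nmid}\cup r\varphi_G^{-1}(\eta)$. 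In both cases the $r\nmid$ component is an $r$-regular partition and is left untouched by the bijection.

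Next, exactly as in \eqref{beck-pd}, the bijection preserves the number of parts not divisible by $r$, so the excess in the number of parts reduces to
\[
\sum_{(\lambda^{r\mid},\lambda^{r\nmid})\in\mathcal B_{r^2}(n)}\ell\big(\tfrac{1}{r}\lambda^{r\mid}\big)-\sum_{(\lambda^{r\mid},\lambda^{r\nmid})\in\mathcal{PRM}_{0,r}(n)}\ell\big(\tfrac{1}{r}\lambda^{r\mid}\big).
\]
For $\lambda\in\mathcal B_{r^2}(n)$ the partition $\tfrac{1}{r}\lambda^{r\mid}$ lies in $\mathcal B_r$, while for $\lambda\in\mathcal{PRM}_{0,r}(n)$ it lies in $\mathcal D_r$; in each case $\lambda^{r\nmid}$ ranges freely over $r$-regular partitions of the complementary size. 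Grouping by the size $rk$ of the divisible-by-$r$ part, the excess equals
\[
\sum_{k=0}^{\lfloor n/r\rfloor} b_r(n-rk)\Big(\sum_{\alpha\in\mathcal B_r(k)}\ell(\alpha)-\sum_{\beta\in\mathcal D_r(k)}\ell(\beta)\Big),
\]
and by the Beck-type companion to Glaisher's identity the inner difference equals $(r-1)|\mathcal O_{1,r}(k)| = (r-1)|\mathcal D_{1,r}(k)|$.

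Finally I would translate these two quantities back up by the dilation-by-$r$ map $\alpha\mapsto r\alpha$ together with an arbitrary $r$-regular partition of $n-rk$ glued on. The set $\mathcal O_{1,r}(k)$ (one part divisible by $r$, all others not) dilates to partitions of $rk$ with all parts divisible by $r$, exactly one divisible by $r^2$ and the rest not; adjoining an $r$-regular partition of $n-rk$ gives precisely the partitions of $n$ with one part (possibly repeated) divisible by $r^2$ and all other parts not divisible by $r^2$. Likewise $\mathcal D_{1,r}(k)$ (one part with multiplicity $\ge r$, all others with multiplicity $<r$) dilates and then glues to an $r$-regular partition of $n-rk$ to yield the partitions of $n$ with one part divisible by $r$ repeated at least $r$ times and all other parts divisible by $r$ with multiplicity less than $r$. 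Summing over $k$ against the factor $b_r(n-rk)$ matches the double sums above, completing the proof. The only mild subtlety — and the step I would be most careful with — is checking that these dilation-and-glue correspondences are genuine bijections onto the claimed families of partitions of $n$ (in particular that the $r\nmid$ part of the glued partition and the dilated $r\mid$ part never interact, and that the ``one part divisible by $r^2$'' is unambiguously identified even when it repeats); but this is routine and parallels the last paragraph of the proof of Theorem \ref{B1}.
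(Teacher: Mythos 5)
Your proposal is correct and is exactly the argument the paper intends: it adapts the proof of Theorem \ref{B1}, replacing the bijection of Theorem \ref{1st_gen} by the Glaisher-based bijection from the combinatorial proof of Theorem \ref{second pd} and the original Beck identity by Yang's Beck-type companion to Glaisher's identity, then transfers $\mathcal O_{1,r}(k)$ and $\mathcal D_{1,r}(k)$ back via dilation by $r$ and gluing an $r$-regular partition of $n-rk$. The reduction of the excess to the inner sums over $\mathcal B_r(k)$ and $\mathcal D_r(k)$, weighted by $b_r(n-rk)$, and the final dilation-and-glue bijections are all handled correctly, so no gaps remain.
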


    The next theorem is a Beck-type companion to the identity of  Theorem \ref{thirdpod}.

\begin{theorem}\label{podgen} Let $n\geq 0$ and $0 < t < r/2$. The excess in the number of parts in all partitions in $\mathcal{P}_{\overline{\pm 2t}, 2r}(n)$ over the number of parts in all partitions in $\PD_{\pm t,r}(n)$ equals  the number of partitions of $n$  with one part (possibly repeated) congruent to $\pm 2t$ modulo $2r$ and all other parts not congruent to $\pm 2t$ modulo $2r$. It also equals the number of partition of $n$ with one part congruent to $\pm t$ modulo $r$ repeated and all other parts congruent to $\pm t$ modulo $r$ distinct. \end{theorem}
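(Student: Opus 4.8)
The plan is to prove this analytically, via a bivariate generating function that records the number of parts, and then to indicate how a combinatorial argument in the style of Theorem \ref{B1} recovers (at least) the first assertion. Writing $z$ for the variable marking parts, the analytic proof of Theorem \ref{1st_gen} carries over verbatim (with $z$ inserted, using $(zq^{a};q^{b})_\infty(-zq^{a};q^{b})_\infty=(z^{2}q^{2a};q^{2b})_\infty$) to give
$$F_{\lambda}(z,q):=\sum_{\lambda\in\PD_{\pm t,r}}z^{\ell(\lambda)}q^{|\lambda|}=\frac{(z^{2}q^{2t};q^{2r})_\infty\,(z^{2}q^{2r-2t};q^{2r})_\infty}{(zq;q)_\infty},\qquad F_{\mu}(z,q):=\sum_{\mu\in\mathcal{P}_{\overline{\pm 2t},2r}}z^{\ell(\mu)}q^{|\mu|}=\frac{(zq^{2t};q^{2r})_\infty\,(zq^{2r-2t};q^{2r})_\infty}{(zq;q)_\infty}.$$
The hypothesis $0<t<r/2$ guarantees that $2t$ and $2r-2t$ are distinct residues modulo $2r$, and at $z=1$ both series reduce to $P(q):=\dfrac{(q^{2t};q^{2r})_\infty(q^{2r-2t};q^{2r})_\infty}{(q;q)_\infty}=\sum_{n\ge0}p_{\overline{\pm 2t},2r}(n)q^{n}$, consistent with Theorem \ref{thirdpod}.

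The generating function for the desired excess in the total number of parts is $\partial_{z}(F_{\mu}-F_{\lambda})\big|_{z=1}$. Since $F_{\mu}$ and $F_{\lambda}$ share the factor $(zq;q)_\infty^{-1}$, that factor drops out of the logarithmic derivative, and the computation reduces to comparing $\partial_{z}\log(z^{2}q^{a};q^{2r})_\infty\big|_{z=1}=-2\sum_{k\ge0}\frac{q^{a+2rk}}{1-q^{a+2rk}}$ with $\partial_{z}\log(zq^{a};q^{2r})_\infty\big|_{z=1}=-\sum_{k\ge0}\frac{q^{a+2rk}}{1-q^{a+2rk}}$ for $a\in\{2t,\,2r-2t\}$. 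Using $\partial_{z}F|_{z=1}=F(1)\,\partial_{z}\log F|_{z=1}$ together with $F_{\mu}(1,q)=F_{\lambda}(1,q)=P(q)$, one obtains the clean formula
$$\partial_{z}(F_{\mu}-F_{\lambda})\Big|_{z=1}=P(q)\sum_{\substack{j\ge1\\ j\equiv\pm 2t\,(\mathrm{mod}\,2r)}}\frac{q^{j}}{1-q^{j}}.$$

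It then remains to read this series in the two ways the theorem demands. Since $\frac{q^{j}}{1-q^{j}}=\sum_{m\ge1}q^{jm}$, the product $P(q)\sum_{j\equiv\pm 2t}\frac{q^{j}}{1-q^{j}}$ is the generating function for a partition with no part $\equiv\pm 2t\pmod{2r}$ together with one extra value $j\equiv\pm 2t\pmod{2r}$ used with some multiplicity $m\ge1$; that is exactly the number of partitions of $n$ having a single part-value $\equiv\pm 2t\pmod{2r}$ (with any multiplicity) and all other parts $\not\equiv\pm 2t\pmod{2r}$, which is the first claim. For the second, I would directly compute the generating function for partitions of $n$ in which one value $p\equiv\pm t\pmod r$ occurs with multiplicity at least $2$, all other values $\equiv\pm t\pmod r$ occur at most once, and parts $\not\equiv\pm t\pmod r$ are unrestricted: extracting the distinguished value gives $\frac{q^{2p}}{1-q^{p}}$, which divided by the ``removed'' factor $1+q^{p}$ equals $\frac{q^{2p}}{1-q^{2p}}$, while the remaining values $\equiv\pm t\pmod r$ contribute $(-q^{t};q^{r})_\infty(-q^{r-t};q^{r})_\infty$ and the unrestricted parts contribute $(q^{t};q^{r})_\infty(q^{r-t};q^{r})_\infty/(q;q)_\infty$; together these factors are $P(q)$, so the whole generating function is $P(q)\sum_{p\ge1,\ p\equiv\pm t\,(\mathrm{mod}\,r)}\frac{q^{2p}}{1-q^{2p}}$. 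As $p\mapsto 2p$ is a bijection from positive integers $\equiv\pm t\pmod r$ onto positive integers $\equiv\pm 2t\pmod{2r}$, this equals the excess series, finishing the proof.

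Alternatively, I would give a combinatorial proof of the first equality mirroring Theorem \ref{B1}: apply the bijection $\Phi\colon\PD_{\pm t,r}(n)\to\mathcal{P}_{\overline{\pm 2t},2r}(n)$ of Theorem \ref{thirdpod}, note that $\ell(\Phi(\lambda))-\ell(\lambda)$ equals the number of parts of $\lambda$ congruent to $\pm 2t\pmod{2r}$, rewrite $\sum_{\lambda}\#\{\text{parts}\equiv\pm 2t\,(\mathrm{mod}\,2r)\}=\sum_{j\equiv\pm 2t\,(2r)}\sum_{m\ge1}pd_{\pm t,r}(n-jm)$, and invoke Theorem \ref{thirdpod} to replace each summand by $p_{\overline{\pm 2t},2r}(n-jm)$; the triples $(\nu,j,m)$ counted on the right biject via $(\nu,j,m)\mapsto\nu\cup(j^{m})$ with the first target set. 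The second equality then follows either from the generating-function computation above or from a direct merging bijection between the two relaxed classes. The one point that needs care in the combinatorial argument is the boundary case $r=3t$, where the residue classes $\pm 2t\pmod{2r}$ and $\pm t\pmod r$ overlap: there $\Phi$ is an iterated, Glaisher-type halving rather than a single halving, and the counting identity for $\ell(\Phi(\lambda))-\ell(\lambda)$ must be reorganized accordingly. The analytic route sidesteps this entirely, since the two closed-form generating functions above are valid for every admissible $t$.
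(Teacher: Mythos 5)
Your proof is correct, but it follows a genuinely different route from the paper. The paper disposes of Theorem \ref{podgen} in one paragraph, purely combinatorially: it adapts the bijective proof of the Beck-type companion to $\textrm{pod}(n)=p_{\overline{2},4}(n)$ given in \cite[Theorem 9.4]{BW23}, replacing odd parts by parts $\equiv\pm t\pmod r$ and parts $\not\equiv 2\pmod 4$ by parts $\not\equiv\pm 2t\pmod{2r}$, in the same spirit as Theorem \ref{B1}. You instead give a self-contained analytic proof, and it checks out: the bivariate generating functions for $\PD_{\pm t,r}$ and $\mathcal{P}_{\overline{\pm 2t},2r}$ are right (the hypothesis $0<t<r/2$ indeed keeps the residues $2t$ and $2r-2t$ distinct and nonzero modulo $2r$), the logarithmic differentiation at $z=1$ correctly produces $P(q)\sum_{j\equiv\pm 2t\,(2r)}q^{j}/(1-q^{j})$ with the common factor $(zq;q)_\infty^{-1}$ dropping out, and both combinatorial readings of that series are valid, including the cancellation $(1-q^{p})(1+q^{p})=1-q^{2p}$ and the reindexing $p\mapsto 2p$ between residues $\pm t\pmod r$ and $\pm 2t\pmod{2r}$. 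What your route buys: a uniform closed-form argument with no case analysis, from which both interpretations in the statement fall out of a single series; in particular it sidesteps the overlap case $r=3t$ (where $2t\equiv -t\pmod r$), which you rightly flag as the place where a single-halving bijection in the style of Theorem \ref{1st_gen} must be replaced by an iterated Glaisher-type map if one argues combinatorially. What the paper's route buys: an explicitly bijective explanation of the excess, in keeping with the paper's preference for combinatorial proofs, at the cost of importing and adapting the machinery of \cite{BW23}. Your sketched combinatorial alternative via $\sum_{m\ge 1}\pd_{\pm t,r}(n-jm)$ is also essentially sound away from $r=3t$, but the analytic argument alone fully proves the theorem.
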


\begin{proof}  In \cite[Theorem 9.4]{BW23}, we gave a combinatorial proof for the Beck-type companion to the identity $$\textrm{pod}(n)=p_{\overline{2},4}(n),$$ where $p_{\overline{2},4}(n)$ is the number of partitions of $n$ with no parts congruent to $2$ modulo $4$. If we modify that proof to work with parts congruent to $\pm t$ modulo $r$ in place of odd parts and parts not congruent to $\pm 2t$ modulo $2r$ in place of parts not congruent to $2$ modulo $4$, we obtain a combinatorial proof of Theorem \ref{podgen}.
    \end{proof}

We also attempted to find a Beck-type companion to the identity of Theorem \ref{Th_prnd}. While we could establish the non-negativity of the excess in the number of parts in all partitions of $n$ in the sets involved in the identity, we were unable to find a combinatorial interpretation for the excess in the spirit of the Beck-type identities. 

\begin{proposition}
    Let $n \geq 0$. The excess in the number of parts in all partitions in $\mathcal{PND}_{0,r}(n)$ over the number of parts in all partitions in $\mathcal{PEM}_{\pm r, 3r}(n)$ is non-negative.
\end{proposition}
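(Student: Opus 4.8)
The plan is to work directly with the bijection $\mu\mapsto\lambda$ from $\mathcal{PND}_{0,r}(n)$ to $\mathcal{PEM}_{\pm r,3r}(n)$ constructed in the combinatorial proof of Theorem \ref{Th_prnd}, and to track exactly how the number of parts changes. Recall that bijection: writing $\mu=(\mu^{r\mid},\mu^{r\nmid})$, for each part $a\equiv 0\pmod r$ occurring in $\mu$ with \emph{odd} multiplicity, we remove three copies of $a$ and replace them by the single part $3a$; the resulting $\lambda$ lies in $\mathcal{PEM}_{\pm r,3r}(n)$. Each such replacement decreases the length by $2$. Hence, if for $\mu\in\mathcal{PND}_{0,r}(n)$ we let $s(\mu)$ denote the number of distinct part-sizes divisible by $r$ that occur in $\mu$ with odd multiplicity, then $\ell(\mu)-\ell(\lambda)=2\,s(\mu)$. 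Summing over $\mathcal{PND}_{0,r}(n)$,
\begin{equation*}
\sum_{\mu\in\mathcal{PND}_{0,r}(n)}\ell(\mu)-\sum_{\lambda\in\mathcal{PEM}_{\pm r,3r}(n)}\ell(\lambda)=2\sum_{\mu\in\mathcal{PND}_{0,r}(n)}s(\mu),
\end{equation*}
which is manifestly non-negative. This already proves the proposition; the only point requiring care is the book-keeping that the map $\mu\mapsto\lambda$ is indeed the bijection of Theorem \ref{Th_prnd} and that distinct odd-multiplicity part-sizes of $\mu$ are handled independently, so that the length change is additive.

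As an alternative (more in the spirit of the generating-function computations elsewhere in the paper), one can extract the excess analytically. Introduce the bivariate generating functions $\sum_{n,\mu}z^{\ell(\mu)}q^{|\mu|}$ over $\mathcal{PND}_{0,r}$ and $\sum_{n,\lambda}z^{\ell(\lambda)}q^{|\lambda|}$ over $\mathcal{PEM}_{\pm r,3r}$, differentiate each with respect to $z$ and set $z=1$; the difference of the two resulting series is the generating function for the excess, and one checks it has non-negative coefficients. Concretely, the factor contributed by a part size $ri$ is $\bigl(\tfrac{1}{1-zq^{ri}}-zq^{ri}\bigr)$ on the PND side versus $\tfrac{1}{1-z^2q^{2ri}}$ on the PEM side (after the merging of triples), and the logarithmic-derivative comparison reduces, for each $i$, to an elementary power-series inequality; summing over $i$ and multiplying by the common unrestricted factor $\prod_{r\nmid j}(1-zq^j)^{-1}$ (evaluated at $z=1$, i.e. $\prod_{r\nmid j}(1-q^j)^{-1}$, which has non-negative coefficients) finishes it. I would present the bijective argument as the main proof and relegate the analytic version to a remark, if included at all.

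The main obstacle is not depth but precision: one must be careful that the quantity $\ell(\mu)-\ell(\lambda)$ really is a sum of local contributions. The subtlety is that when a part-size $a$ divisible by $r$ has odd multiplicity $2k+1$ in $\mu$, the bijection removes exactly one triple and leaves multiplicity $2k-2$ (still even, possibly zero), while the newly created part $3a\equiv\pm r\pmod{3r}$ may or may not collide with parts already present — but since $3a$ is congruent to $0$ modulo $3r$ only if $a\equiv 0\pmod r$... wait, $3a$ is always divisible by $3r$ when $a$ is divisible by $r$, so in fact $3a\equiv 0\pmod{3r}$, not $\pm r$; the parts forced to have even multiplicity in $\mathcal{PEM}_{\pm r,3r}$ are those $\equiv\pm r\pmod{3r}$, i.e. divisible by $r$ but not by $3r$, and these are precisely the images of the even-multiplicity part-sizes of $\mu$. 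So the creation of $3a$ (which is divisible by $3r$ and hence \emph{unrestricted} in $\mathcal{PEM}_{\pm r,3r}$) can indeed merge with an existing copy of $3a$, but this is already accounted for in the bijection of Theorem \ref{Th_prnd} and does not affect the count $\ell(\mu)-\ell(\lambda)=2s(\mu)$, since that count only depends on how many triples were removed. Making this argument airtight — stating clearly that each removed triple costs exactly $2$ in length regardless of subsequent merging — is the one place where the write-up must be careful.
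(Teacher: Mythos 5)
Your main (bijective) argument is correct and is essentially identical to the paper's proof: both use the bijection of Theorem \ref{Th_prnd} and observe that each odd-multiplicity part size divisible by $r$ costs exactly two parts (three copies removed, one part $3a$ added), so the excess equals twice the total number of such part sizes over all $\mu\in\mathcal{PND}_{0,r}(n)$, hence is non-negative. The analytic variant you sketch is unnecessary and the paper does not pursue it; the bijective count suffices.
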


\begin{proof}
We use the combinatorial proof showing $\textrm{pnd}_{0, r}(n) = \textrm{pem}_{\pm r, 3r}(n)$. Recall that if we start  with $\lambda \in \mathcal{PND}_{0,r}(n)$,  for every part divisible by $r$ that has odd multiplicity, we merge three equal parts into a single part to obtain a partition in $\mathcal{PEM}_{\pm r, 3r}(n)$. So the excess in the number of parts in all partitions in $\mathcal{PND}_{0, r}(n)$ over the number of parts in all partitions in $\mathcal{PEM}_{\pm r, 3r}(n)$ is twice the number of parts divisible by $r$ with odd multiplicity in all partitions in  $\mathcal{PND}_{0,r}(n)$. 
\end{proof}

\section{Concluding remarks}\label{s_7}

In this paper, we have offered multiple generalizations for PED and POD partitions and extended several known results. We also made   connections to PEND and POND partitions and their analogous generalizations.  We hope that our work has created avenues of further study of these generalizations. 

In particular, we note two areas of interest. The first is examining the arithmetic properties of these generalizations. Much work has been done in studying arithmetic properties of PED and POD partitions. Among the may articles on the subject see, for example, \cite{A10,BM21, Chen, CG, HS, KZ, X} for congruences for PED partitions and \cite{CGM, FXY, HS10, RS, W}  for congruences for POD partitions.  Hence, this would be a natural topic of further study. The second  is generalizing  the DE-rank introduce by Andrews in \cite{A09}: if $\lambda$ is a PED partition,  the DE-rank of $\lambda$ equals $\lfloor \lambda_1/2 \rfloor - \ell(\lambda^{2\mid})$.  In \cite{A09}, several remarkable properties of functions defined in terms of the DE-rank of PED partitions are given.  Generalizing, we can define the $D_{0,r}$-rank of a partition $\lambda \in \mathcal{PD}_{0,r}$ to be $\lfloor \lambda_1/r \rfloor - \ell(\lambda^{r\mid})$. It is fairly straightforward to give an analogue of  \cite[Theorem 2.2]{A09} for  the $D_{0,r}$-rank. Analogues of the other theorems in \cite{A09} remain elusive.

\end{document}